\titlespacing{\section}{0cm}{3.5pc}{1.5pc}
\def\@cite#1{#1}
\def\@citex[#1]#2{\if@filesw\immediate\write\@auxout{\string\citation{#2}}\fi
  \def\@citea{}\@cite{\@for\@citeb:=#2\do
    {\@citea\def\@citea{\@citesep}\@ifundefined
       {b@\@citeb}{{\bf ?}\@warning
       {Citation `\@citeb' on page \thepage \space undefined}}%
{\csname b@\@citeb\endcsname}}}{#1}}
\def\@citesep{; }
\newtheoremstyle{Kang}{}{}{\itshape}{}{\bf}{}{.5em}{}
\theoremstyle{Kang}
\newtheorem{theorem}{Theorem}[section]
\newtheorem{lemma}[theorem]{Lemma}
\newtheorem{prop}[theorem]{Proposition}
\newtheoremstyle{Kremark}{}{}{}{}{\bf}{}{.5em}{}
\theoremstyle{Kremark}
\newtheorem{defn}[theorem]{Definition}
\newtheorem{example}[theorem]{Example}
\newtheorem*{remark}{Remark.}
\newtheorem{other}{}
\newenvironment{idef}[1]{\begin{other}}{\end{other}}
\newenvironment{Case}[1]{\medskip {\it Case #1.}}{}
\numberwithin{equation}{section}
\def\fn#1{\mathop{{\rm #1}\vphantom{\sin}}} 
\def\bm#1{\mathbbm{#1}}
\def\ul#1{\mathop{\underline{#1}}}
\def\ol#1{\mathop{\overline{#1}}}
\def\q{\quad\;}  
\title{Quasi-monomial actions and some 4-dimensional rationality problems}
\author{Akinari Hoshi$^{1}$, Ming-chang Kang$^{2}$ and Hidetaka Kitayama$^{3}$ \\[3mm]
\begin{minipage}{16cm} \begin{description} \itemsep=-1pt
\item[] $^{(1)}$Department of Mathematics, Rikkyo University,
Tokyo, Japan \item[] $^{(2)}$Department of Mathematics and Taida
Institute of Mathematical\\ Sciences, National Taiwan University,
Taipei, Taiwan \item[] $^{(3)}$Department of Mathematics, Osaka
University, Osaka, Japan
\end{description} \end{minipage}}
\date{}
\begin{document}

\maketitle

\footnote{\textit{\!\!\!$2010$ Mathematics Subject
Classification}. Primary 12F20, 13A50, 14E08.}
\footnote{\textit{\!\!\!Keywords and phrases}. Rationality
problem, monomial actions, embedding problem, Noether's problem,
algebraic tori.}\footnote{E-mails : hoshi@rikkyo.ac.jp,
kang@math.ntu.edu.tw, h-kitayama@cr.math.sci.osaka-u.ac.jp}
\footnote{\!\!\!The first-named author was partially supported by
KAKENHI (22740028). The second-named author was partially
supported by the National Center for Theoretic Sciences (Taipei
Office). Some part of this work was done during the first-named
author and the third-named author visited the National Center for
Theoretic Sciences (Taipei Office), whose support is gratefully
acknowledged.}

\begin{abstract}
{\bf Abstract.} Let $G$ be a finite group acting on
$k(x_1,\ldots,x_n)$, the rational function field of $n$ variables
over a field $k$. The action is called a purely monomial action if
$\sigma\cdot x_j=\prod_{1\le i\le n} x_i^{a_{ij}}$ for all $\sigma
\in G$, for $1\le j\le n$ where $(a_{ij})_{1\le i,j\le n} \in
GL_n(\bm{Z})$. The main question is that, under what situations,
the fixed field $k(x_1,\ldots,x_n)^G$ is rational (= purely
transcendental) over $k$. This rationality problem has been
studied by Hajja, Kang, Hoshi, Rikuna when $n\le 3$. In this paper
we will prove that $k(x_1,x_2,x_3,x_4)^G$ is rational over $k$
provided that the purely monomial action is decomposable. To prove
this result, we introduce a new notion, the quasi-monomial action,
which is a generalization of previous notions of multiplicative
group actions. Moreover, we determine the rationality problem of
purely quasi-monomial actions of $K(x, y)^G$ over $k$ where $k=
K^G$.
\end{abstract}

\clearpage \leftmargini=5ex
\section{Introduction}

Let $k$ be a field, $L$ be a finitely generated field extension of
$k$. $L$ is called $k$-rational (or rational over $k$) if $L$ is
purely transcendental over $k$, i.e.\ $L$ is isomorphic to $k(x_1,
\ldots, x_n)$, the rational function field of $n$ variables over
$k$ for some integer $n$. $L$ is called $k$-unirational (or
unirational over $k$) if $k \subset L \subset k(x_1, \ldots, x_n)$
for some integer $n$. It is clear that
``$k$-rational"$\Rightarrow$``$k$-unirational". The classical
rationality problem (also known as the L\"uroth problem) asks
whether a $k$-unirational field is $k$-rational. Although the
rationality problem has counter-examples in dimension 3 in 1970's,
many special cases of it, e.g. Noether's problem for non-abelian
groups, are still mysterious and await to be explored. The reader
is referred to the papers [\cite{MT,CTS,Sw}] for surveys of the
various rationality problems and Noether's problem.

The rationality problem of twisted multiplicative actions on
rational function fields [\cite{Sa1}, p.538; \cite{Sa2}, p.535;
\cite{Ka4}, Section 2] is ubiquitous in the investigation of
Noether's problem and other rationality problems. In this paper,
we introduce a more general twisted multiplicative action, the
quasi-monomial action, which arose already in the literature
[\cite{Sa1}, p.542; \cite{Ka3}, p.2773]. A good understanding of
the rationality problem of quasi-monomial actions is useful to
solving other rationality problems.

\begin{defn} \label{d1.1}
Let $G$ be a finite subgroup of $\fn{Aut}_k(K(x_1,\ldots,x_n))$ where $K/k$ is a finite field extension and
$K(x_1,\ldots,x_n)$ is the rational function field of $n$ variables over $K$.
The action of $G$ on $K(x_1,\ldots,x_n)$ is called a quasi-monomial action if it satisfies the following three conditions:
\begin{enumerate}
\item[(i)] $\sigma(K)\subset K$ for any $\sigma\in G$; \item[(ii)]
$K^G=k$, where $K^G$ is the fixed field under the action of $G$;
and \item[(iii)] for any $\sigma\in G$ and $1 \le j \le n$,
$\sigma(x_j)=c_j(\sigma)\prod_{i=1}^n x_i^{a_{ij}}$ where
$c_j(\sigma)\in K^\times$ and $[a_{i,j}]_{1\le i,j \le n} \in
GL_n(\bm{Z})$.
\end{enumerate}

The quasi-monomial action is called a purely quasi-monomial action if $c_j(\sigma)=1$ for any $\sigma \in G$, any $1\le j\le n$ in (iii).

To simplify the wordings in the above situation, we will say that
$G$ acts on $K(x_1,\ldots,$ $x_n)$ by purely quasi-monomial
$k$-automorphisms or quasi-monomial $k$-automorphisms depending on
the coefficients $c_j(\sigma)=1$ or not (thus the assumption
$k=K^G$ is understood throughout this paper).

Note that the possibility $k=K$ is not excluded. In fact, when
$k=K$, it becomes monomial actions or purely monomial actions
which are prototypes of quasi-monomial actions (see Example
\ref{ex1.3}).
\end{defn}

The main question of this paper is:

\begin{idef}{Question.}
Under what situation the fixed field $K(x_1,\ldots,x_n)^G$ is
$k$-rational if $G$ acts on $K(x_1,\ldots,x_n)$ by quasi-monomial
$k$-automorphisms ?
\end{idef}

\begin{example} \label{ex1.2}
When $G$ acts by purely quasi-monomial $k$-automorphisms and
$G\simeq \fn{Gal}(K/k)$, i.e.\ $G$ acts faithfully on $K$, the
fixed field $K(x_1,\ldots,x_n)^G$ is a function field of some
algebraic torus defined over $k$ and split over $K$ [\cite{Vo2}].
It is easy to see that all the 1-dimensional algebraic tori are
rational. The birational classification of the 2-dimensional
algebraic tori and the 3-dimensional algebraic tori has been
studied by Voskresenskii [\cite{Vo1}] and Kunyavskii [\cite{Ku}]
respectively. We record their results as the following two
theorems.
\end{example}

\begin{theorem}[{Voskresenskii [\cite{Vo1}]}] \label{t1.11}
Let $k$ be a field. Then all the two-dimensional algebraic
$k$-tori are rational over $k$. In particular, $K(x_1,x_2)^G$ is
always $k$-rational if $G$ is faithful on $K$ and $G$ acts on
$K(x_1,x_2)$ by purely monomial $k$-automorphisms.
\end{theorem}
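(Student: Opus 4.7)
The plan is to translate the rationality question into a problem about rank-$2$ integral representations of $G$, and then carry out a short case-by-case analysis based on the classification of finite subgroups of $GL_2(\bm{Z})$. By Example \ref{ex1.2}, every $2$-dimensional $k$-torus split by $K/k$ has function field of the form $K(x_1,x_2)^G$ for some faithful purely monomial action of $G=\fn{Gal}(K/k)$ on $K(x_1,x_2)$, and conversely every such $K(x_1,x_2)^G$ is the function field of a $2$-dimensional $k$-torus. Hence the two assertions in the theorem are equivalent, and both reduce to proving $k$-rationality of $K(x_1,x_2)^G$ for each faithful embedding $G \hookrightarrow GL_2(\bm{Z})$.

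The second step is to invoke the classical crystallographic classification of the plane: up to conjugacy in $GL_2(\bm{Z})$, the finite subgroups form finitely many classes, realizing the cyclic groups of orders $1,2,3,4,6$ and the dihedral groups of orders $4,6,8,12$. For each class I would adapt the multiplicative generators $x_1,x_2$ of $K(x_1,x_2)$ to the $G$-action on the character lattice $M=\bm{Z} x_1\oplus \bm{Z} x_2$. When $M$ is a $G$-permutation lattice, $k$-rationality of $K(x_1,x_2)^G$ is immediate from the no-name lemma combined with the Galois descent from $K$ to $k$. When $M$ merely contains a rank-$1$ $G$-invariant permutation summand, the no-name lemma reduces the problem to a $1$-dimensional torus, which is rational by Hilbert 90.

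The main obstacle is the indecomposable cases, in which $M\otimes \bm{Q}$ supports an irreducible faithful representation of $G$, so that no rank-$1$ $G$-invariant sublattice exists. Here one must exhibit ad-hoc changes of multiplicative generators and appeal to Hilbert 90 for the final descent. Useful moves are $x\mapsto x+x^{-1}$ to absorb an involution inverting $x$, Galois-symmetric/antisymmetric combinations for the $3$- and $6$-cycles (using a quadratic subextension of $K/k$ when $G$ acts on $K$ through $\bm{Z}/2$), and fractional-linear substitutions $x\mapsto (x-\alpha)/(x-\bar\alpha)$ to kill $K^\times$-twists that emerge after the monomial substitution. The delicate point, and the genuine technical difficulty, is verifying in each of these indecomposable cases that the constructed generators are algebraically independent over $k$ and that Hilbert 90 is applicable after the change of variables; the enumeration itself is short and mechanical.
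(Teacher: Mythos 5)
First, a point of orientation: the paper does not prove this statement. Theorem \ref{t1.11} is recorded as a quoted result of Voskresenskii [\cite{Vo1}], so there is no internal proof to measure your argument against; it has to stand on its own. Your overall strategy --- identify $K(x_1,x_2)^G$ with the function field of a two-dimensional $k$-torus split by $K$, reduce to the $13$ conjugacy classes of finite subgroups of $GL_2(\bm{Z})$ (the list the paper reproduces as Theorem \ref{t3.1}), and treat each class separately --- is the standard one and is essentially how Voskresenskii's theorem is proved, so the framing is sound.

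The gap is that you have deferred precisely the cases that carry the content of the theorem. The decomposable lattices are indeed easy: one gets products of Weil restrictions and of norm-one tori of quadratic extensions, i.e.\ conics with a rational point. But the faithful indecomposable actions --- $C_3$, $S_3$, $C_6$, $D_6$ on the hexagonal lattice and $C_4$, $D_4$ on the square lattice --- include, for instance, the norm-one torus $R^1_{K/k}\bm{G}_m$ of a cubic Galois extension and its quartic analogue. Rationality there is not delivered by the moves you list ($x\mapsto x+x^{-1}$, symmetric and antisymmetric combinations, fractional-linear substitutions); it requires either an explicit and genuinely nontrivial birational parametrization or geometric input (for example, that the smooth compactification of the cubic norm-one torus is a del Pezzo surface of degree six with a $k$-point, together with the theorem that such surfaces are $k$-rational). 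For a sense of scale, the paper's Lemma \ref{l3.3}, which handles only the \emph{split} form of the hexagonal $C_3$-action, already occupies several pages of successive blow-ups of a singular cubic; the twisted forms needed for Theorem \ref{t1.11} are separate computations of comparable difficulty. As written, your proposal is a correct plan whose decisive cases remain unproved, and the phrase ``ad-hoc changes of multiplicative generators'' is standing in for the entire proof.
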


\begin{theorem}[{Kunyavskii [\cite{Ku}; \cite{Ka4}, Section 1]}] \label{t1.12}
Let $k$ be a field. Then all the three-dimensional algebraic
$k$-tori are rational over $k$ except for the $15$ cases in the
list of \rm{[\cite{Ku}, Theorem 1]}. For the exceptional $15$
cases, they are not rational over $k$; in fact, they are even not
retract rational over $k$.
\end{theorem}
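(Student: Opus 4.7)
The plan is to translate the rationality problem for a $3$-dimensional algebraic $k$-torus $T$ into a combinatorial problem about $G$-lattices, where $G=\fn{Gal}(K/k)$ and $K$ is a minimal splitting field of $T$. Recall that the functor $T\mapsto \widehat T=\fn{Hom}(T_K,\bm{G}_m)$ gives an anti-equivalence between algebraic $k$-tori split by $K$ and $\bm{Z}[G]$-lattices that are $\bm{Z}$-free of finite rank; in the notation of Example \ref{ex1.2}, the function field of $T$ is precisely $K(x_1,\ldots,x_n)^G$ for a purely quasi-monomial $G$-action encoded by $\widehat T$. So the first step is: reduce everything to a classification of faithful finite subgroups $G\le GL_3(\bm{Z})$ acting on $M=\bm{Z}^3$, up to $\bm{Z}$-equivalence.

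Next I would invoke the Jordan--Zassenhaus/crystallographic classification: there are only finitely many conjugacy classes of finite subgroups of $GL_3(\bm{Z})$ (in fact $73$ of them, grouped into $32$ $\bm{Q}$-classes corresponding to the crystallographic point groups), and for each such pair $(G,M)$ one has an explicit generating set. For each representative I would then apply the standard rationality machinery: compute a flabby resolution $0\to M\to P\to M'\to 0$ with $P$ permutation, and examine the flabby class $[M]^{\rm fl}=[M']$ in the monoid of similarity classes of flabby lattices. The guiding principle is Endo--Miyata/Voskresenskii/Colliot-Th\'el\`ene--Sansuc: $T$ is (stably) $k$-rational iff $[M]^{\rm fl}=0$, and $T$ is retract $k$-rational iff $[M]^{\rm fl}$ is invertible. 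In particular a non-invertible flabby class produces failures of retract rationality, which is the stronger conclusion claimed for the $15$ exceptional cases.

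In positive direction (rationality), for each non-exceptional $(G,M)$ I would either exhibit $M$ as a permutation lattice after stabilization, or construct an explicit birational trivialization of $T$: typically one peels off one-dimensional subtori (which are automatically rational, as noted in Example \ref{ex1.2}) and applies a version of the ``no-name lemma'' together with Theorem \ref{t1.11} in dimension $2$ to glue the pieces into a rational parametrization of $K(x_1,x_2,x_3)^G$. Combined with the Hilbert~90 / Speiser trick this handles virtually every case where $M$ decomposes nontrivially.

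The hard part will be the negative direction for the $15$ exceptional conjugacy classes. Here one has to show the flabby class is not even invertible, which is delicate because stable invariants alone do not always suffice; one must compute $H^1(H,[M]^{\rm fl})$ for suitable subgroups $H\le G$ and exhibit a non-zero obstruction, or equivalently produce a local place $v$ of $k$ at which the cohomological obstruction $\Sha^2_\omega(G,M)$ (measuring retract rationality via the Colliot-Th\'el\`ene--Sansuc exact sequence with $R$-equivalence) does not vanish. Carrying out these $H^*$ computations for each of the $15$ offending lattices, and checking that no stabilization trick eliminates them, is the genuine content of the theorem and the step where one cannot avoid a detailed, case-by-case analysis.
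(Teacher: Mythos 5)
The first thing to note is that the paper does not prove this statement at all: Theorem \ref{t1.12} is quoted verbatim from Kunyavskii [\cite{Ku}, Theorem 1], with the strengthening from ``not rational'' to ``not retract rational'' taken from [\cite{Ka4}] (see the last step of the proof of Theorem \ref{t6.5}, where the authors again defer to Kunyavskii for exactly this point). So there is no in-paper argument to compare yours against; what you have written must stand on its own as a reconstruction of Kunyavskii's proof, and as such it is an outline of a strategy rather than a proof. You yourself concede that the case-by-case analysis over the $73$ conjugacy classes of finite subgroups of $GL_3(\bm{Z})$ ``is the genuine content of the theorem''; none of it is carried out, so the proposal cannot be accepted as a proof of either direction.

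Beyond incompleteness, there is a concrete gap in your positive direction. The flabby-class criterion you invoke gives only: $[M]^{\rm fl}=0$ if and only if $T$ is \emph{stably} $k$-rational. The theorem asserts genuine $k$-rationality for all non-exceptional classes, and the passage from stable rationality to rationality is itself a substantial part of Kunyavskii's work, not a formal consequence. Your proposed upgrade --- peeling off one-dimensional subtori and applying the no-name lemma together with Theorem \ref{t1.11} --- only applies when $M$ decomposes as a $\bm{Z}[G]$-lattice (your own phrase ``virtually every case where $M$ decomposes nontrivially'' gives this away), whereas the delicate non-exceptional cases are precisely the indecomposable ones, which require explicit birational models of the tori. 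On the negative side your plan is essentially right in spirit (show $[M]^{\rm fl}$ is not invertible, e.g.\ by exhibiting a subgroup $H$ with $H^1(H,[M]^{\rm fl})\neq 0$, which is how one obtains the failure of retract rationality rather than merely of stable rationality), but again no computation is performed for any of the $15$ lattices. In short: correct roadmap, no proof.
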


For the definition of retract rationality, see Definition
\ref{d6.1}.

\begin{example} \label{ex1.3}
When $G$ acts trivially on $K$, i.e.\ $k=K$, the quasi-monomial
action is called the monomial action. When $k=K$ and the
quasi-monomial action is purely, it is called a purely monomial
action. Monomial actions and purely monomial actions on
$k(x_1,x_2)$ and $k(x_1,x_2,x_3)$ were studied by Hajja, Kang,
Hoshi, Rikuna, Prokhorov, Yamasaki, Kitayama, etc.. But almost
nothing is known about the rationality of $k(x_1,x_2,x_3,x_4)^G$
where $G$ acts by purely monomial $k$-automorphisms, to say
nothing for the monomial $k$-actions. In the following we list the
known results for the monomial actions and purely monomial actions
on $k(x_1,x_2)$ and $k(x_1,x_2,x_3)$.
\end{example}

\begin{theorem} [{Hajja [\cite{Ha}]}] \label{t1.13}
Let $k$ be a field, $G$ be a finite group acting on $k(x_1,x_2)$
by monomial $k$-automorphisms. Then $k(x_1,x_2)^{G}$ is rational
over $k$.
\end{theorem}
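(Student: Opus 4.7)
The plan is to reduce the problem to a finite-subgroup analysis inside $GL_2(\mathbb{Z})$. Let $\rho : G \to GL_2(\mathbb{Z})$ be the homomorphism $\sigma \mapsto (a_{ij}(\sigma))$ that forgets the scalar coefficients, and set $N = \ker \rho$ and $H = \rho(G) = G/N$. For $\sigma \in N$ one has $\sigma(x_j) = c_j(\sigma)\, x_j$, so $N$ acts through the pair of characters $(c_1, c_2) : N \to k^\times \times k^\times$ and is abelian. Since the $c_j(\sigma)$ all have finite order, the sublattice
\[
M = \{(a,b) \in \mathbb{Z}^2 : c_1(\sigma)^a c_2(\sigma)^b = 1 \text{ for every } \sigma \in N\}
\]
has finite index in $\mathbb{Z}^2$.

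The first step would be to identify $k(x_1, x_2)^N$. Pick a $\mathbb{Z}$-basis $(a_i, b_i)$ of $M$ and set $y_i = x_1^{a_i} x_2^{b_i}$; these are algebraically independent $N$-invariant monomials. Under the (harmless) assumption that $G$ acts faithfully on $k(x_1, x_2)$, the map $(c_1, c_2) : N \to (k^\times)^2$ is injective, which forces $[\mathbb{Z}^2 : M] = |N|$; a degree comparison in $k(x_1, x_2)/k(y_1, y_2)$ then yields $k(x_1, x_2)^N = k(y_1, y_2)$. A direct computation shows that $H = G/N$ acts on $k(y_1, y_2)$ again by monomial $k$-automorphisms, with exponent matrix the conjugate of $\rho(g)$ in the new basis and with well-defined scalars $c'_i(h) = c_1(g)^{a_i} c_2(g)^{b_i}$ for any lift $g$ of $h$ (the independence of the lift being forced by $(a_i, b_i) \in M$). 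The induced exponent representation $H \to GL_2(\mathbb{Z})$ is injective, so up to conjugacy $H$ is one of the finitely many crystallographic subgroups of $GL_2(\mathbb{Z})$: the trivial group, a cyclic group $C_n$ with $n \in \{2,3,4,6\}$, or a dihedral group $D_n$ with $n \in \{2,3,4,6\}$.

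For each such $H$, after a further monomial change of variables putting $H$ into a standard normal form, I would write down two algebraically independent $H$-invariants in $k(y_1, y_2)$ by a case-specific construction: orbital sums, partial products, or explicit rational combinations such as $y_1 y_2^{-1} + c'\, y_1^{-1} y_2$ with coefficients involving the twist. In each case the resulting invariants exhibit $k(y_1, y_2)^H$ as the function field of a rational quadric or conic bundle over $\mathbb{P}^1_k$, so that $k$-rationality follows by elementary parametrisation. The main obstacle is precisely this final case analysis: the purely monomial version (all $c'_i = 1$) is essentially the invariant theory of a split rank-$2$ torus and is well understood, but with nontrivial $c'_i$ one has to verify case-by-case that the twist can be trivialised without enlarging $k$. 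The most delicate cases are the dihedral $H$ containing a rotation of order $3$, $4$, or $6$, where the rotational and reflection invariants must be twisted compatibly; these I would handle by reducing to the cyclic subgroup via a quadratic descent and then recombining using the explicit cocycle formula.
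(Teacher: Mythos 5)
First, a point of reference: the paper does not prove Theorem \ref{t1.13} at all --- it is quoted from Hajja's article [\cite{Ha}] --- so your attempt can only be measured against the general machinery the paper does develop, namely Proposition \ref{p1.6} and the model case analysis of Sections 3--4. Your reduction step is correct and is essentially Proposition \ref{p1.6} specialized to $n=2$, $K=k$: you pass to $N=\ker\rho$, observe that $c_1,c_2$ restrict to genuine characters of $N$ (because $N$ fixes $k$), form the finite-index sublattice $M$ of $N$-invariant exponents, verify $[\bm{Z}^2:M]=|N|$ and $k(x_1,x_2)^N=k(y_1,y_2)$ by a degree count, and check that $G/N$ again acts monomially with injective exponent representation. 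All of this is sound; the only caveat is that the classification of finite subgroups of $GL_2(\bm{Z})$ is up to conjugacy and there are $13$ classes rather than $9$ abstract groups (Theorem \ref{t3.1}) --- for instance the three classes of $C_2$, represented by $\langle -I\rangle$, $\langle\lambda\rangle$, $\langle\tau\rangle$, require genuinely different treatments.

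The gap is in the final step, which is the actual content of Hajja's theorem. You assert that in each case the invariant field is ``the function field of a rational quadric or conic bundle over $\mathbb{P}^1_k$, so that $k$-rationality follows by elementary parametrisation.'' As a general principle this is false: a conic bundle over $\mathbb{P}^1_k$ need not be $k$-rational, and need not even be $k$-unirational --- see Example \ref{ex4.2} of this paper (the surface $u^2+v^2=-x^2-1$ over $\bm{Q}$), and, more tellingly, Theorem \ref{t1.15}, where exactly your strategy applied to three-dimensional monomial actions produces genuinely irrational fixed fields in $8+1$ cases. So rationality in dimension $2$ is not a formality of the normal form: for each of the $13$ conjugacy classes and each possible system of twists $c_i'(h)\in k^\times$ one must either exhibit explicit generators (as in Lemma \ref{l3.1}, where even the single involution $x\mapsto a/x$, $y\mapsto b/y$ requires a nontrivial explicit pair $u,v$) or prove that the relevant conic has a $k$-point. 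Your sketch does not carry this out, and the cases you yourself flag as delicate (dihedral $H$ with nontrivial twists, handled by ``quadratic descent and recombining using the explicit cocycle formula'') are precisely where the work --- and, in higher dimension, the failure --- lives. As it stands the proposal is a correct reduction followed by an unproved claim whose blanket justification is incorrect.
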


\begin{theorem} [{Hajja, Kang, Hoshi, Rikuna [\cite{HK1,HK2,HR}]}] \label{t1.14}
Let $k$ be a field, $G$ be a finite group acting on
$k(x_1,x_2,x_3)$ by purely monomial $k$-automorphisms. Then
$k(x_1,x_2,x_3)^G$ is rational over $k$.
\end{theorem}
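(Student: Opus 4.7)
The plan is to reduce the problem to a $G$-lattice analysis and then argue case-by-case using the classification of finite subgroups of $GL_3(\mathbb{Z})$. First I would observe that the purely monomial action is encoded in the group homomorphism $\rho\colon G\to GL_3(\mathbb{Z})$ defined by $\rho(\sigma)=[a_{ij}(\sigma)]$; since $\ker\rho$ acts trivially on every Laurent monomial in the $x_i$, it acts trivially on $k(x_1,x_2,x_3)$, so replacing $G$ by its image we may assume $G$ is a finite subgroup of $GL_3(\mathbb{Z})$ acting faithfully on the lattice $M=\bigoplus_{i=1}^{3}\mathbb{Z}x_i$ (written multiplicatively).

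The finite subgroups of $GL_3(\mathbb{Z})$ fall into $73$ conjugacy classes (the arithmetic crystal classes in dimension three), so it is enough to prove rationality of $k(x_1,x_2,x_3)^G$ for a representative of each class. The uniform reduction that disposes of almost every case is the following: $M^G$ is automatically a pure sublattice of $M$, so whenever $M^G\ne 0$ we can pick a primitive vector $v\in M^G$, extend it to a $\mathbb{Z}$-basis $\{v,w_1,w_2\}$ of $M$, and introduce the corresponding multiplicative coordinates $(u,y_1,y_2)$. Then $u$ is $G$-fixed and $G$ acts monomially on $(y_1,y_2)$ over the subfield $k(u)$, so Hajja's Theorem \ref{t1.13} gives that $k(u)(y_1,y_2)^G$ is rational over $k(u)$, and hence over $k$.

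The substantive content therefore lies in the (few) conjugacy classes for which $M^G=0$. These include $\langle -I\rangle$ and a handful of subgroups built from $S_3$, $S_4$, $D_4$, and $A_4$ whose defining three-dimensional representation is $\mathbb{Z}$-indecomposable and has no trivial summand. For such $G$ I would look for a normal subgroup $H\trianglelefteq G$ for which $M^H\ne 0$ (so the previous paragraph applies to $H$) and then transfer the residual action of $G/H$ to the $H$-fixed field $k(x_1,x_2,x_3)^H$, which is already known to be a rational extension of $k$ in three explicit variables; depending on how $G/H$ acts on those variables, one applies Theorem \ref{t1.13} (or the $n=1$ trivial case) to finish. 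For the small number of residual cases that resist this inductive trick — most notably the $-I$-action $x_i\mapsto x_i^{-1}$ — rationality must be verified by writing down explicit invariants (e.g.\ using $u_i=x_i+x_i^{-1}$ together with suitable products of the anti-invariants $x_i-x_i^{-1}$ when $\op{char}k\ne 2$, and a variant argument when $\op{char}k=2$).

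The main obstacle is the case analysis itself: there is no single conceptual invariant that handles all $73$ classes simultaneously, and one cannot, as in Theorem \ref{t1.11}, appeal to a general rationality theorem for three-dimensional algebraic tori, because Theorem \ref{t1.12} shows that such tori need not be rational. The hardest individual cases therefore demand ad hoc birational transformations, with careful attention paid to the characteristic of $k$ and to which roots of unity $k$ contains — it is this extensive case-by-case verification, rather than any single deep idea, that makes the proof lengthy.
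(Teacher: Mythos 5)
First, a caveat: this paper does not prove Theorem \ref{t1.14} at all --- it is imported from [\cite{HK1,HK2,HR}] and merely quoted --- so there is no internal proof against which to measure your proposal. What can be said is that your outline is consistent with the strategy actually used in those references: reduce to a faithful action of one of the $73$ conjugacy classes of finite subgroups of $GL_3(\bm{Z})$, split off a fixed variable whenever $M^G\ne 0$, and treat the remaining classes one by one. Your two uniform reductions are sound: $\ker\rho$ does act trivially because the action is purely monomial and trivial on $k$; $M^G$ is indeed a pure sublattice, so a primitive fixed vector extends to a basis; and you are right to invoke Hajja's Theorem \ref{t1.13} for \emph{monomial} (not purely monomial) actions, since the residual action on $k(u)(y_1,y_2)$ acquires coefficients that are powers of $u$.

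As a proof, however, the proposal has a genuine gap exactly where the content of the theorem lives. The step ``find a normal subgroup $H\triangleleft G$ with $M^H\ne 0$, compute $k(x_1,x_2,x_3)^H$ explicitly, then apply Theorem \ref{t1.13} to the residual $G/H$-action'' does not go through as stated: once one passes to explicit generators of $k(x_1,x_2,x_3)^H$, the induced action of $G/H$ is in general no longer monomial, and whether it can be made monomial (or linear, or otherwise tractable) by a further change of variables is precisely the hard part. Lemma \ref{l3.3} of this paper is a two-dimensional illustration: even there, producing generators $S,T$ of $k(x,y)^{\langle\rho^2\rangle}$ on which the quotient acts monomially requires a chain of blow-ups and is characteristic-dependent; in three dimensions this is the bulk of [\cite{HK2}], and one class resisted until [\cite{HR}]. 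Two smaller inaccuracies: ``a handful'' undercounts the classes with $M^G=0$ (every subgroup containing $-I$ is among them), and characterizing them as the $\bm{Z}$-indecomposable lattices without trivial summand is wrong, since $\langle -I\rangle$ itself is completely decomposable. None of this makes the strategy incorrect, but the proposal defers all of the substance to unperformed computations and should be regarded as a plan rather than a proof.
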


\begin{theorem} [{Yamasaki, Hoshi, Kitayama, Prokhorov [\cite{Ya,HKY,Pr}]}] \label{t1.15}
Let $k$ be a field of {\rm char} $k\neq 2$, $G$ be a finite group
acting on $k(x_1,x_2,x_3)$ by monomial $k$-automorphisms such that
$\rho_{\ul{x}} : G\to GL_3 (\bm{Z})$ is injective \rm{(}where
$\rho_{\ul{x}}$ is the group homomorphism in Definition
\ref{d1.5}\rm{)}. Then $k(x_1,x_2,x_3)^G$ is rational over $k$
except for the 8 cases contained in \rm{[\cite{Ya}]} and one
additional case. For the last exceptional case, the fixed field
$k(x_1,x_2,x_3)^G$ is also rational over $k$ except for a minor
situation. In particular, if $k$ is an algebraically closed field
with {\rm char} $k \neq 2$, then $k(x_1,x_2,x_3)^G$ is rational
over $k$ for any action of $G$ on $k(x_1,x_2,x_3)$ by monomial
$k$-automorphisms.
\end{theorem}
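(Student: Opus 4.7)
The plan is to proceed by exhausting the 73 conjugacy classes of finite subgroups of $GL_3(\bm{Z})$ through which $\rho_{\ul{x}}$ embeds $G$. A monomial $k$-automorphism is captured by a pair $(M, c)$, where $M = \bigoplus_{i=1}^{3} \bm{Z} x_i$ is the rank-3 $G$-lattice giving the exponents and $c = (c_j(\sigma))$ is a twisting cocycle governing the scalars, and the $k$-isomorphism class of the extension $k(x_1,x_2,x_3)/k(x_1,x_2,x_3)^G$ depends only on the appropriate cohomology class of this pair.

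First I would stratify the 73 classes by reducibility properties of $M$. Whenever $M$ admits a $G$-stable direct summand of rank $\le 2$, the fixed field is a tower of a 2-dimensional monomial fixed field and a purely transcendental step, so Theorem \ref{t1.13} proves $k$-rationality irrespective of the twist. This disposes of the bulk of the classes quickly. For the irreducible or indecomposable lattices, I would try to kill or normalize the coefficients $c_j(\sigma)$ by a multiplicative substitution $x_i \mapsto \lambda_i x_i$ and by invoking Hilbert 90; the hypothesis $\fn{char} k \ne 2$ is precisely what controls the 2-torsion that typically obstructs such normalizations. Whenever the action can be reduced to a purely monomial one, Theorem \ref{t1.14} concludes the argument.

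The cases in which the cocycle resists normalization should reduce, after all these maneuvers, to a short residual list of rigid monomial patterns. For each such pattern I would look for a fibration of $k(x_1,x_2,x_3)^G$ over a rational 2-dimensional subfield whose generic fibre is a conic; if the conic carries a $k$-rational point, a theorem of Castelnuovo--Ch\^atelet type delivers $k$-rationality, and otherwise one has a genuine obstruction.

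The hard part is precisely isolating this residual list and deciding rationality within it: the 8 exceptional patterns of \cite{Ya} together with one further pattern (presumably missed by a subtle lattice reduction in \cite{Ya}) and a narrow ``minor situation'' of that additional case in which rationality is sensitive to the arithmetic of $k$ (for instance to a class in $k^\times/(k^\times)^2$ coming from the conic just mentioned). The ``in particular'' clause is then automatic: when $k$ is algebraically closed with $\fn{char} k \ne 2$, every twist is cohomologically trivial and every conic is split, so the argument collapses to Theorem \ref{t1.14} and yields $k$-rationality uniformly.
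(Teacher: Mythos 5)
This theorem is quoted by the paper from [\cite{Ya}; \cite{HKY}; \cite{Pr}] and is not proved anywhere in it, so there is no internal proof to compare against; your outline does follow the general shape of the arguments in those references (exhaustion over the $73$ conjugacy classes of finite subgroups of $GL_3(\bm{Z})$, normalization of the twisting data, conic fibrations in the residual cases). It nevertheless contains a concrete false step. Your first stratification claims that whenever the lattice $M$ has a $G$-stable direct summand of rank $\le 2$, the fixed field is a tower of a $2$-dimensional monomial fixed field and a \emph{purely transcendental} step, so that Theorem \ref{t1.13} gives $k$-rationality ``irrespective of the twist.'' This is not true: if $M=M_1\oplus M_2$ with $\fn{rank}_{\bm{Z}}M_2=1$ and some $\sigma$ acts by $x_3\mapsto c/x_3$ with $c\in k^{\times}$, the step $k(M_1)(x_3)^G/k(M_1)^G$ is a conic bundle, not a purely transcendental extension, and it can fail to be rational. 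Indeed the known non-rational examples --- the very $8$ cases of [\cite{Ya}] that the theorem excepts --- include completely decomposable lattices, e.g.\ $\rho_{\ul{x}}(G)=\langle -I\rangle$ acting by $x_i\mapsto a_i/x_i$ for $i=1,2,3$. Already in rank $1$ the twist produces a norm-residue-symbol obstruction (compare Proposition \ref{p1.7}(2) and Lemma \ref{l4.1}(1) of this paper), so the classes you propose to ``dispose of quickly'' are exactly where part of the difficulty lives. Note also that purely monomial decomposable actions in dimension $4$ are the subject of Theorem \ref{t1.10} of this paper and require real work even without a twist.

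The second problem is that the remainder of the proposal is a plan rather than a proof: you do not produce the residual list, do not exhibit the $8$ exceptional patterns, the additional ninth case, or the ``minor situation,'' and do not verify rationality in the many non-exceptional indecomposable cases. Since the entire content of the theorem is precisely this classification, the proposal cannot stand as a proof; at best it is a plausible description of the strategy actually carried out in [\cite{Ya}] and [\cite{HKY}]. Finally, the ``in particular'' clause is not quite automatic in the way you assert: over an algebraically closed field one does not in general reduce every monomial action to a purely monomial one by rescaling; rather, one uses the reduction of Proposition \ref{p1.6} to handle non-injective $\rho_{\ul{x}}$ and then observes that the arithmetic conditions defining the exceptional cases (non-square classes, non-trivial symbols, pointless conics) cannot be satisfied over such a field.
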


\begin{theorem} [{Kang, Michailov, Zhou [\cite{KMZ}, Theorem 2.5]}] \label{t1.16}
Let $k$ be a field with $\fn{char}k\ne 2$, $\alpha$ be a non-zero
element in $k$. Let $G$ be a finite group acting on
$k(x_1,x_2,x_3)$ by monomial $k$-automorphisms with all the
coefficients $c_j(\sigma)$ \rm{(}in Definition \ref{d1.1}\rm{)}
belonging to $\langle \alpha \rangle$. Assume that $\sqrt{-1}$ is
in $k$. Then $k(x_1,x_2,x_3)^G$ is rational over $k$.
\end{theorem}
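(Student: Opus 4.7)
The goal is to reduce the given monomial $G$-action to a purely monomial action by a multiplicative change of variables defined over $k$; once this is achieved, Theorem~\ref{t1.14} yields the $k$-rationality of the fixed field. The hypotheses that all the coefficients $c_j(\sigma)$ lie in a single cyclic subgroup $\langle\alpha\rangle\subset k^\times$ and that $\sqrt{-1}\in k$ are precisely what make the required absorption possible.

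\textbf{Step 1} (kill the kernel of $\rho_{\underline{x}}$). Let $N=\ker\rho_{\underline{x}}\subset G$. For $\sigma\in N$ the action is diagonal, $\sigma(x_j)=\chi_j(\sigma)\,x_j$ with $\chi_j\colon N\to\langle\alpha\rangle$ a group homomorphism (since $G$ acts trivially on $k$ in the monomial case). A $\bm{Z}$-basis $y_1,y_2,y_3$ of the rank-$3$ sublattice $\{(a_1,a_2,a_3)\in\bm{Z}^3:\prod_j\chi_j^{a_j}=1\}$ of the exponent lattice yields $k(x_1,x_2,x_3)^N=k(y_1,y_2,y_3)$. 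Since $N$ is normal in $G$, the quotient $G/N$ acts on $k(y_1,y_2,y_3)$; the action is again monomial and its coefficients are products of powers of the $c_j(\sigma)$, hence still belong to $\langle\alpha\rangle$. We may therefore assume $\rho_{\underline{x}}$ is injective.

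\textbf{Step 2} (untwist). Now $G$ is realized as a finite subgroup of $GL_3(\bm{Z})$, of which there are only finitely many conjugacy classes. For each class we seek $\beta_1,\beta_2,\beta_3\in k^\times$ so that the substitution $x_j=\beta_j x_j'$ converts the action on $x_1',x_2',x_3'$ into a purely monomial one. Written out on a generating set of $G$, the consistency conditions on the $\beta_j$'s reduce (using the cyclicity of $\langle\alpha\rangle$) to finitely many scalar equations of the form $\beta_j^2=\pm\,\alpha^m$ in $k$; the ``$-$'' sign appears precisely when an involution $\sigma\in G$ has $\rho_{\underline{x}}(\sigma)$ inverting the $j$-th coordinate. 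Because $-1$ is a square in $k$ and elements of $\langle\alpha\rangle$ with even exponent already have square roots in $\langle\alpha\rangle$, every such equation is solvable in $k$, so the cocycle $\sigma\mapsto(c_1(\sigma),c_2(\sigma),c_3(\sigma))$ is trivialized. On the new generators, $G$ acts by purely monomial $k$-automorphisms of $k(x_1',x_2',x_3')=k(x_1,x_2,x_3)$, and Theorem~\ref{t1.14} completes the proof.

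\textbf{Main obstacle.} The heart of the argument is Step~2: one must traverse all conjugacy classes of finite subgroups of $GL_3(\bm{Z})$ and verify in each case that the obstruction really does reduce to the quadratic equations above, with no hidden higher-order obstruction. A secondary subtlety is that Step~1 may produce new coefficients that are higher powers of $\alpha$ than the original $c_j(\sigma)$; the fact that $\langle\alpha\rangle$ is closed under products and powers is exactly what keeps the hypothesis intact across the reduction. One should also check that when $\rho_{\underline{x}}(\sigma)$ is a signed permutation with off-diagonal $\pm1$ entries (not just a pure inversion), the resulting mixed relations still collapse to square-root equations solvable in $k$ thanks to $\sqrt{-1}$; this is the point where the restriction to a single cyclic coefficient group becomes essential, since otherwise several independent quadratic obstructions could accumulate.
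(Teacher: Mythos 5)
The paper itself offers no proof of Theorem \ref{t1.16}: it is quoted verbatim from [\cite{KMZ}, Theorem 2.5]. Your proposal therefore has to stand on its own, and it does not, because Step 2 contains a genuine gap. The claim that the coefficients can always be absorbed by a rescaling $x_j=\beta_jx_j'$ is false. Take $G=\langle\sigma\rangle\simeq C_2$ with $\rho_{\ul{x}}(\sigma)=-I$ and $\sigma(x_j)=\alpha/x_j$ for $j=1,2,3$. A rescaling gives $\sigma(x_j')=\alpha\beta_j^{-2}/x_j'$, so you need $\beta_j^2=\alpha$: this is exactly one of your equations $\beta_j^2=\pm\alpha^m$ but with $m=1$ odd, and it has no solution in $k$ when $\alpha\notin (k^{\times})^2$ (e.g. $k=\bm{Q}(\sqrt{-1})$, $\alpha=2$); the hypothesis $\sqrt{-1}\in k$ is of no help. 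More structurally, the obstruction to converting the action into a purely monomial one --- even allowing arbitrary monomial changes of variables, not just rescalings --- is the class of the extension $1\to k^{\times}\to M_\alpha\to M\to 0$ of Definition \ref{d1.9} in $\operatorname{Ext}^1_{\bm{Z}[G]}(M,k^{\times})\simeq H^1(G,\operatorname{Hom}(M,k^{\times}))$, which for $M=\bm{Z}^3$ with $\sigma$ acting by $-I$ equals $(k^{\times}/(k^{\times})^2)^3$ and is represented here by $(\alpha,\alpha,\alpha)$. So there are actions satisfying every hypothesis of the theorem that are provably \emph{not} equivalent to purely monomial actions, and Theorem \ref{t1.14} cannot be reached for them. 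Your ``no hidden higher-order obstruction'' worry is misplaced: the obstruction is already quadratic and already non-trivial.

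The conclusion is still true in such cases, but it must be proved differently: for the example above one passes to $u=x/y$, $w=z/x$ (so that $\sigma(u)=1/u$, $\sigma(w)=1/w$), applies Lemma \ref{l3.1} to $k(x,y)^{\langle\sigma\rangle}$, and then descends the last variable by hand; in general one runs through the finite subgroups of $GL_3(\bm{Z})$ and, in the non-split cases, computes the fixed field directly, reducing to conic-bundle/norm-residue criteria of the type in Lemma \ref{l4.1} and in [\cite{HKY}]. The hypotheses $c_j(\sigma)\in\langle\alpha\rangle$ and $\sqrt{-1}\in k$ enter by forcing the resulting symbols (such as $(a,-1)_k$ or $(a,-a)_k$) to vanish --- not by splitting the cocycle. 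Your Step 1 is essentially Proposition \ref{p1.6} and is fine; the heart of the theorem lies precisely in the non-split cases that Step 2 dismisses.
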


\begin{example} \label{ex1.4}
Assume that $G\simeq \fn{Gal}(K/k)$, i.e.\ $G$ acts faithfully on
$K$. Suppose that $a_\sigma \in GL_{n+1}(K)$ for each $\sigma \in
G$. Denote by $\bar{a}_\sigma$ the image of $a_\sigma$ in the
canonical map $GL_{n+1}(K)\to PGL_{n+1}(K)$. Consider the rational
function fields $K(y_0,y_1,\ldots,y_n)$ and $K(x_1$, $\ldots,x_n)$
where $x_i=y_i/y_0$ for $1\le i\le n$. For each $\sigma\in G$,
$a_\sigma$ induces a $k$-automorphism on $K(y_0,y_1,\ldots,y_n)$
and $K(x_1,\ldots,x_n)$ (note that elements of $K$ in
$K(y_0,\ldots,y_n)$ are acted through $\fn{Gal}(K/k)$). Assume
furthermore that the map $\gamma:G\to PGL_n(K)$ defined by
$\gamma(\sigma)=\bar{a}_\sigma$ is a 1-cocycle, i.e.\ $\gamma\in
H^1(G,PGL_n(K))$. Then $G$ induces an action on
$K(x_1,\ldots,x_n)$. The fixed field $K(x_1,\ldots,x_n)^G$ is
called a Brauer-field $F_{n,k}(\gamma)$, i.e.\ the function field
of an $n$-dimensional Severi-Brauer variety over $k$ associated to
$\gamma$ [\cite{Ro1,Ro2,Ka1}]. It is known that a Brauer-field
over $k$ is $k$-rational if and only if it is $k$-unirational
[\cite{Se}, page 160]. If we assume that each $a_\sigma$ is an
$M$-matrix, i.e.\ each column of $a_\sigma$ has precisely one
non-zero entry, then the action of $G$ on $K(x_1,\ldots,x_n)$
becomes a quasi-monomial action.
\end{example}

From Example \ref{ex1.4}, we find that, for a quasi-monomial
action of $G$, the fixed field $K(x_1,\ldots,x_n)^G$ is not always
$k$-unirational. However, for the quasi-monomial actions discussed
in Example \ref{ex1.2} and Example \ref{ex1.3},
$K(x_1,\ldots,x_n)^G$ are always $k$-unirational (see [\cite{Vo2},
page 40, Example 21]).

Before stating the first main result of this paper, we define another terminology related to a quasi-monomial action.

\begin{defn} \label{d1.5}
Let $G$ act on $K(x_1,\ldots,x_n)$ by quasi-monomial
$k$-automorphisms with the conditions (i), (ii), (iii) in
Definition \ref{d1.1}. Define a group homomorphism $\rho_{\ul{x}}
: G\to GL_n (\bm{Z})$ by $\Phi_{\ul{x}}(\sigma)=[a_{i,j}]_{1\le
i,j\le n} \in GL_n(\bm{Z})$ for any $\sigma \in G$ where the
matrix $[a_{i,j}]_{1\le i,j \le n}$ is given by
$\sigma(x_j)=c_j(\sigma)\prod_{1\le i\le n} x_i^{a_{ij}}$ in (iii)
of Definition \ref{d1.1}.
\end{defn}

\begin{prop} \label{p1.6}
Let $G$ be a finite group acting on $K(x_1,\ldots,x_n)$ by
quasi-monomial $k$-automorphisms. Then there is a normal subgroup
$N$ of $G$ satisfying the following conditions \rm{:} {\rm (i)}
$K(x_1,\ldots,x_n)^N=K^N(y_1,\ldots,y_n)$ where each $y_i$ is of
the form $ax_1^{e_1}x_2^{e_2}\cdots x_n^{e_n}$ with $a\in
K^{\times}$ and $e_i\in \bm{Z}$ \rm{(}we may take $a=1$ if the
action is a purely quasi-monomial action\rm{)}, {\rm (ii)} $G/N$
acts on $K^N(y_1,\ldots,y_n)$ by quasi-monomial $k$-automorphisms,
and {\rm (iii)} $\rho_{\ul{y}}: G/N\to GL_n(\bm{Z})$ is an
injective group homomorphism.
\end{prop}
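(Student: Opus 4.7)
The plan is to take $N = \ker \rho_{\underline{x}}$, which is normal in $G$ by construction. For each $\sigma \in N$ one has $\sigma(x_j) = c_j(\sigma)\, x_j$ with $c_j(\sigma) \in K^\times$, and the composition rule yields $c_j(\sigma\tau) = c_j(\sigma)\, \sigma(c_j(\tau))$, so $c_j \in Z^1(N, K^\times)$. I will locate $n$ monomials $y_j = a_j \prod_i x_i^{e_{ij}}$ with $a_j \in K^\times$ that are $N$-invariant, whose exponent vectors form a $\bm{Z}$-basis of a full-rank sublattice $\Lambda \subseteq \bm{Z}^n$, and then verify all three conclusions by a degree count together with the normality of $N$.

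The first step will be to describe $\Lambda$ intrinsically. Let $N_0 \subseteq N$ be the kernel of the natural map to $\fn{Aut}(K)$, so that $N/N_0 = \fn{Gal}(K/K^N)$. The restriction of each $c_j$ to $N_0$ is a homomorphism $N_0 \to K^\times$, and the joint kernel $\bigcap_j \ker(c_j|_{N_0})$ consists of $\sigma \in N_0$ acting trivially on every $x_j$, hence trivially on $K(x_1,\ldots,x_n)$; the faithfulness of $G$ forces this joint kernel to be $\{1\}$. Consequently $(c_1,\ldots,c_n)\colon N_0 \hookrightarrow (K^\times)^n$, so $N_0$ is finite abelian and $K$ contains the roots of unity of order $\exp(N_0)$. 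I will define $\Lambda$ as the kernel of the map $\bm{Z}^n \to H^1(N, K^\times)$ sending $(e_j) \mapsto [\prod_j c_j^{e_j}]$; by construction $(e_j) \in \Lambda$ iff $\prod_j x_j^{e_j}$ can be rescaled by some element of $K^\times$ to become $N$-invariant.

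The heart of the argument is the index equality $[\bm{Z}^n : \Lambda] = |N_0|$. The inflation-restriction sequence $0 \to H^1(N/N_0, K^\times) \to H^1(N, K^\times) \to H^1(N_0, K^\times)$ has vanishing first term by Hilbert 90 applied to the Galois extension $K/K^N$; hence $H^1(N, K^\times)$ injects into $H^1(N_0, K^\times) = \fn{Hom}(N_0, K^\times)$, and $\Lambda$ is simply the kernel of $\bm{Z}^n \to \fn{Hom}(N_0, K^\times)$. Since $N_0$ is finite abelian and $K$ holds the required roots of unity, Pontryagin duality furnishes a perfect pairing $N_0 \times \fn{Hom}(N_0, K^\times) \to K^\times$; the image of $\bm{Z}^n$ and its annihilator $\bigcap_j \ker(c_j|_{N_0}) = \{1\}$ then multiply to $|N_0|$, proving the claimed equality. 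Lifting any $\bm{Z}$-basis of $\Lambda$ into $M^N$ (where $M = K^\times \cdot \langle x_1,\ldots,x_n\rangle$) produces $y_1,\ldots,y_n$ of the prescribed form.

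Condition (i) will then follow from the telescoping $[K(x_1,\ldots,x_n):K^N(y_1,\ldots,y_n)] = [\bm{Z}^n:\Lambda]\cdot [K:K^N] = |N_0|\cdot [N:N_0] = |N| = [K(x_1,\ldots,x_n):K(x_1,\ldots,x_n)^N]$, combined with $K^N(y_1,\ldots,y_n) \subseteq K(x_1,\ldots,x_n)^N$. For (ii), normality of $N$ forces $G$ to preserve both $M^N$ and $(K^N)^\times$, so each $\sigma \in G$ sends $y_j$ to $b\prod_\ell y_\ell^{f_\ell}$ with $b \in (K^N)^\times$, a quasi-monomial action. For (iii), if $\sigma \in G$ acts as the identity on $\Lambda$ then, because $\Lambda$ has finite index in $\bm{Z}^n = M/K^\times$, the matrix $\rho_{\underline{x}}(\sigma)$ is the identity on all of $\bm{Z}^n$, forcing $\sigma \in N$. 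The purely quasi-monomial case is immediate: $c_j \equiv 1$ forces $N_0 = \{1\}$ and $\Lambda = \bm{Z}^n$, so one takes $y_j = x_j$ with $a_j = 1$. The main technical hurdle will be the Pontryagin duality computation behind $[\bm{Z}^n:\Lambda] = |N_0|$, which depends crucially on the automatic presence of the necessary roots of unity in $K$.
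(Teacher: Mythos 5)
Your proposal is correct, and it shares the paper's skeleton: $N=\ker\rho_{\ul{x}}$, the auxiliary subgroup $N_0=\ker(N\to\fn{Aut}(K))$, the sublattice of exponent vectors of monomials that become $N$-invariant after rescaling by an element of $K^{\times}$, and Hilbert 90 for $\fn{Gal}(K/K^N)$ to produce the rescaling factors. The differences lie in the bookkeeping, and in places they are genuine improvements. The paper computes the fixed field in two explicit stages --- first $K(x_1,\ldots,x_n)^{N_0}=K(y_1,\ldots,y_n)$ from the kernel of the map $\Phi$ into $\langle\zeta\rangle^m$ (asserted as ``not difficult to see''), then $K(y_1,\ldots,y_n)^{N/N_0}=K^N(z_1,\ldots,z_n)$ via the vanishing of $H^1(N/N_0,K^{\times})$ --- whereas you fold both stages into one by identifying $\Lambda$ with the kernel of $\bm{Z}^n\to H^1(N,K^{\times})$ via inflation--restriction, and then verify (i) by the degree count $[\bm{Z}^n:\Lambda]\cdot[K:K^N]=|N_0|\cdot[N:N_0]=|N|$; your Pontryagin-duality computation $[\bm{Z}^n:\Lambda]=|N_0|$ makes explicit exactly what the paper's fixed-field assertion conceals. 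Your treatment of (iii) is also sharper: the paper's Step 4 reads ``if $\fn{Ker}(\rho_{\ul{z}})$ is non-trivial, continue the process,'' i.e.\ an iteration, while your observation that a matrix in $GL_n(\bm{Z})$ fixing the finite-index sublattice $\Lambda$ pointwise must be the identity shows that one pass already makes $\rho_{\ul{y}}$ injective on $G/N$, so no iteration is ever needed. In a final write-up the only points deserving an extra sentence are the standard facts you invoke silently: that the field generated over $K$ by the monomials with exponents in $\Lambda$ has index $[\bm{Z}^n:\Lambda]$ in $K(x_1,\ldots,x_n)$, and that $K$ contains a primitive $\exp(N_0)$-th root of unity because $N_0$ embeds into $(K^{\times})^n$.
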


Here is an easy application of Proposition \ref{p1.6}.

\begin{prop} \label{p1.7}
{\rm (1)} Let $G$ be a finite group acting on $K(x)$ by purely quasi-monomial $k$-automorphisms.
Then $K(x)^G$ is $k$-rational.

{\rm (2)} Let $G$ be a finite group acting on $K(x)$ by
quasi-monomial $k$-automorphisms. Then $K(x)^G$ is $k$-rational
except for the following case \rm{:} There is a normal subgroup
$N$ of $G$ such that {\rm (i)} $G/N=\langle \sigma \rangle \simeq
C_2$, {\rm (ii)} $K(x)^N=k(\alpha)(y)$ with $\alpha^2=a\in
K^{\times}$, $\sigma(\alpha)=-\alpha$ \rm{(}if $\fn{char}k \ne
2$\rm{)}, and $\alpha^2+\alpha=a\in K$, $\sigma(\alpha)=\alpha+1$
\rm{(}if $\fn{char} k=2$\rm{)}, {\rm (iii)} $\sigma\cdot y=b/y$
for some $b\in k^{\times}$.

For the exceptional case, $K(x)^G=k(\alpha) (y)^{G/N}$ is
$k$-rational if and only if the norm residue $2$-symbol
$(a,b)_k=0$ (if $\fn{char}k\ne 2$), and $[a,b)_k=0$ (if
$\fn{char}k=2$).

Moreover, if $K(x)^G$ is not $k$-rational, then $k$ is an infinite
field, the Brauer group $\fn{Br}(k)$ is non-trivial, and $K(x)^G$
is not $k$-unirational.
\end{prop}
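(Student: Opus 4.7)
The plan is to apply Proposition~\ref{p1.6}, which supplies a normal subgroup $N\trianglelefteq G$ such that $K(x)^N=K^N(y)$ and $\rho_{\ul{y}}\colon G/N\hookrightarrow GL_1(\bm Z)=\{\pm 1\}$. Thus $|G/N|\in\{1,2\}$. If $G/N$ is trivial, then $K(x)^G=K^N(y)=k(y)$ is $k$-rational. Otherwise $G/N=\langle\sigma\rangle\simeq C_2$, and injectivity of $\rho_{\ul{y}}$ forces $\sigma(y)=cy^{-1}$ for some $c\in(K^N)^\times$; writing out $\sigma^2(y)=y$ yields $\sigma(c)=c$, hence $c\in k^\times$. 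If $\sigma$ acts trivially on $K^N$, then $K^N=(K^N)^{G/N}=K^G=k$, and $K^N(y)^\sigma=k(y+c/y)$ is $k$-rational. The only potentially exceptional case is therefore when $\sigma$ is non-trivial on $K^N$, so that $K^N/k$ is quadratic Galois. For part~(1), the purely quasi-monomial hypothesis lets one take $y=x^e$ in Proposition~\ref{p1.6} (no scalar); lifting $\sigma$ to $\tilde\sigma\in G$ then gives $\tilde\sigma(y)=\tilde\sigma(x)^e=y^{\pm 1}$, so $c=1$.

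In the remaining case I compute $K^N(y)^\sigma$ explicitly. Write $K^N=k(\alpha)$ with $\alpha^2=a\in k^\times$, $\sigma(\alpha)=-\alpha$ when $\fn{char}k\neq 2$, and $\alpha^2+\alpha=a\in k$, $\sigma(\alpha)=\alpha+1$ when $\fn{char}k=2$. In the first case set
\[
u=y+\tfrac{c}{y},\qquad v=\alpha\bigl(y-\tfrac{c}{y}\bigr);
\]
both are $\sigma$-invariant and satisfy $v^2=a(u^2-4c)$. In the second case $u=y+c/y$ and $v=\alpha u+c/y$ are $\sigma$-invariant and satisfy $v^2+uv+au^2=c$. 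A short degree count — $[K^N(y):k(u)]=[K^N(y):K^N(u)]\cdot[K^N(u):k(u)]=2\cdot 2=4$ and $[K^N(y):K^N(y)^\sigma]=2$ by Artin's theorem — gives $[K^N(y)^\sigma:k(u)]=2$. Since $a,c\neq 0$, one checks $v\notin k(u)$ (the defining relation is an honest quadratic, resp.\ Artin--Schreier, extension), so the inclusion $k(u,v)\subseteq K^N(y)^\sigma$ is an equality. Thus $K(x)^G=k(u,v)$ is the function field of a smooth conic over $k$.

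For part~(1), this conic has the obvious $k$-point $(u,v)=(2,0)$ in characteristic $\neq 2$ and $(0,1)$ in characteristic $2$ (because $c=1$), and so is $k$-rational. For part~(2), a smooth conic over $k$ is $k$-rational iff it carries a $k$-rational point. In $\fn{char}k\neq 2$, manipulation using the standard identities $(a,-a)_k=0$ and $(a,a)_k=(a,-1)_k$ identifies the existence of a point on $v^2-au^2+4ab=0$ with the vanishing of $(a,b)_k$. In $\fn{char}k=2$, the $k$-points of $v^2+uv+au^2=b$ correspond precisely to presentations $b=x^2+xy+ay^2=N_{K^N/k}(x+y\alpha)$, i.e.\ to $[a,b)_k=0$. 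This yields the stated rationality criterion.

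Finally, assume $K(x)^G$ is not $k$-rational, so the conic $C$ has no $k$-point. If $k$ were finite then $\fn{Br}(k)=0$ by Wedderburn, forcing every conic to have a $k$-point (contradiction), so $k$ is infinite; and non-vanishing of the relevant symbol in $\fn{Br}(k)$ shows $\fn{Br}(k)\neq 0$. Moreover, any $k$-embedding $K(x)^G\hookrightarrow k(t_1,\dots,t_m)$ gives a dominant rational map $\bm A^m_k\dashrightarrow C$; since $k$ is infinite, $\bm A^m(k)$ is Zariski dense, so specialising at a $k$-point in the domain of definition produces a $k$-point on $C$, contradicting its absence. Hence $K(x)^G$ is not $k$-unirational. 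The main obstacle I anticipate is correctly constructing the invariants $u,v$ in characteristic $2$ and matching the rationality condition of the resulting conic with the Artin--Schreier symbol $[a,b)_k$; the characteristic $\neq 2$ computation is the classical Hilbert theorem~90 plus a conic argument and is essentially routine.
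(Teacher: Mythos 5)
Your argument is correct, but it takes a genuinely different route at the decisive step. Both proofs begin the same way: Proposition \ref{p1.6} reduces everything to $G/N=\langle\sigma\rangle\simeq C_2$ acting on $K(x)^N=k(\alpha)(y)$ with $\sigma(y)=b/y$, and the relation $\sigma^2=1$ forces $b\in k^{\times}$ (the paper makes exactly this observation). From there the paper does \emph{not} compute the fixed field explicitly: it recognizes $k(\alpha)(y)^{\langle\sigma\rangle}$ as the Brauer field $F_{1,k}(\beta)$ of a $1$-cocycle $\beta\in H^1(G/N,PGL_2(k(\alpha)))$, identifies the associated cyclic algebra $A(\beta)=k(\alpha)\oplus k(\alpha)\cdot u$ with $u^2=b$, $ut=\sigma(t)u$, and invokes [\cite{Se}, page 160] --- a Brauer field is $k$-rational iff it is $k$-unirational iff the algebra splits --- to obtain the rationality criterion and the non-unirationality assertion simultaneously. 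You instead exhibit explicit invariants $u=y+b/y$ and $v=\alpha(y-b/y)$ (resp.\ $v=\alpha u+b/y$ in characteristic $2$), check by a degree count that they generate the fixed field, and then rely only on elementary facts about smooth conics: rational iff there is a $k$-point, a $k$-point iff the norm condition $(a,b)_k=0$ (resp.\ $[a,b)_k=0$) holds, and absence of a $k$-point rules out unirationality over an infinite field by specializing a dominant rational map from affine space. The two routes describe the same object --- your conic is the Severi--Brauer curve of the paper's quaternion algebra --- but yours is self-contained and explicit, whereas the paper's is shorter and sets up the Brauer-field formalism it reuses later (Example \ref{ex1.4}, Lemma \ref{l4.1}). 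Your treatment of part (1) also differs: the paper argues directly via the substitution $y=\frac{1-x}{1+x}$ (resp.\ $y=\frac{1}{1+x}$), while you fold it into the general case by noting $b=1$ and pointing at the obvious $k$-point $(2,0)$ (resp.\ $(0,1)$) on the conic; both are valid.
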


For the definition of the norm residue $2$-symbols $(a,b)_k$ and
$[a,b)_k$, see [\cite{Dr}, Chapter 11].

The first main result of this paper is the following.

\begin{theorem} \label{t1.8}
Let $G$ be a finite group acting on $K(x,y)$ by purely
quasi-monomial $k$-automorphisms. Define $N=\{\sigma\in G:
\sigma(x)=x,~ \sigma(y)=y\}$, $H=\{\sigma\in G:
\sigma(\alpha)=\alpha$ for all $\alpha\in K\}$. Then $K(x,y)^G$ is
rational over $k$ except possibly for the following situation
\rm{:}  \rm{(}1\rm{)} $\fn{char}k\ne 2$ and \rm{(}2\rm{)}
$(G/N,HN/N)\simeq (C_4,C_2)$ or $(D_4,C_2)$.

More precisely, in the exceptional situation we may choose $u,v\in
k(x,y)$ satisfying that $k(x,y)=k(u,v)$ (and therefore
$K(x,y)=K(u,v)$) such that
\begin{enumerate}
\item[{\rm (i)}] when $(G/N,HN/N)\simeq (C_4,C_2)$,
$K^N=k(\sqrt{a})$ for some $a\in k\backslash k^2$, $G/N=\langle
\sigma \rangle \simeq C_4$, then $\sigma$ acts on $K^N(u,v)$ by
$\sigma: \sqrt{a} \mapsto -\sqrt{a}$, $u\mapsto \frac{1}{u}$,
$v\mapsto -\frac{1}{v}$; or \item[{\rm (ii)}] when
$(G/N,HN/N)\simeq (D_4,C_2)$, $K^N=k(\sqrt{a},\sqrt{b})$ is a
biquadratic extension of $k$ with $a,b\in k\backslash k^2$,
$G/N=\langle \sigma,\tau \rangle \simeq D_4$, then $\sigma$ and
$\tau$ act on $K^N(u,v)$ by $\sigma:\sqrt{a} \mapsto -\sqrt{a}$,
$\sqrt{b}\mapsto \sqrt{b}$, $u\mapsto\frac{1}{u}$, $v\mapsto
-\frac{1}{v}$, $\tau: \sqrt{a}\mapsto \sqrt{a}$, $\sqrt{b}\mapsto
-\sqrt{b}$, $u\mapsto u$, $v\mapsto -v$.
\end{enumerate}
For Case {\rm (i)}, $K(x,y)^G$ is $k$-rational if and only if the
norm residue $2$-symbol $(a,-1)_k=0$. For Case {\rm (ii)},
$K(x,y)^G$ is $k$-rational if and only if $(a,-b)_k=0$.

Moreover, if $K(x,y)^G$ is not $k$-rational, then $k$ is an
infinite field, the Brauer group $\fn{Br}(k)$ is non-trivial, and
$K(x,y)^G$ is not $k$-unirational.
\end{theorem}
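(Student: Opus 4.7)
The plan is to first apply Proposition~\ref{p1.6} to reduce to the case where $\rho_{\ul{y}} : G/N \hookrightarrow GL_2(\bm{Z})$ is faithful, then carry out a case analysis on the pair $(G/N, HN/N)$; here $HN/N \trianglelefteq G/N$ and the quotient $(G/N)/(HN/N)$ is identified with $\fn{Gal}(K^H/k)$. The two boundary cases are handled directly: if $H = G$ (so $K = k$, the action is purely monomial) then $K(x,y)^G$ is $k$-rational by Theorem~\ref{t1.13}; if $H = \{1\}$ (so $G$ acts faithfully on $K$) then $K(x,y)^G$ is the function field of a two-dimensional algebraic $k$-torus, hence $k$-rational by Theorem~\ref{t1.11}.

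For the intermediate pairs, I would enumerate the finite subgroups of $GL_2(\bm{Z})$ and their normal subgroups. Whenever the $G/N$-lattice $\bm{Z}y_1 \oplus \bm{Z}y_2$ admits a $G/N$-stable rank-1 sublattice, one reduces to a one-variable problem by applying Proposition~\ref{p1.7} to the associated subfield $K^N(z_1)$ and then analyzing the induced action on the remaining variable. This disposes of every reducible integral representation. For the irreducible representations $C_3, C_4, C_6, D_3, D_4, D_6$, direct invariant calculations (together with Theorems~\ref{t1.11} and~\ref{t1.13} applied to appropriate intermediate subfields or quotient groups) give $k$-rationality except for the two stated pairs $(C_4, C_2)$ and $(D_4, C_2)$. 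In each exceptional pair, $HN/N = C_2$ is forced to be the unique central subgroup of $G/N$, embedded as $-I$ in $GL_2(\bm{Z})$.

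For the exceptional cases, a further explicit change of variables inside $K^N(y_1, y_2)$ brings the $G/N$-action to the normal form stated in the theorem; note that this change introduces the sign coefficient $-1$ on $v$, so that the $G/N$-action on the new $K^N(u,v)$ factors through its central $C_2$ and the fixed-field computation reduces to a $C_2$-invariants problem over $K^N$. In case~(i), putting $p = u + u^{-1}, q = v - v^{-1}$, the $\sigma$-invariants inside the $(C_2)^3$-cover $k(\sqrt{a}, u, v)/k(p, q, a)$ are generated by $p, q, r = \sqrt{a}(u - u^{-1}), s = \sqrt{a}(v + v^{-1})$, subject to $r^2 = a(p^2 - 4)$ and $s^2 = a(q^2 + 4)$. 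The first equation defines a conic with the obvious $k$-rational point $(p, r) = (2, 0)$, so $k(p, r) \simeq k(T)$ is $k$-rational; the fixed field is then the function field over $k(T)$ of the conic $s^2 = a(q^2 + 4)$, whose projective form $X^2 - aY^2 = 4aZ^2$ has a $k$-point iff the quaternion symbol $(a, 4a)_k = (a, -1)_k$ vanishes in $\fn{Br}(k)$. The analogous but somewhat longer computation over the biquadratic $K^H = k(\sqrt{a}, \sqrt{b})$ in case~(ii) yields the criterion $(a, -b)_k = 0$.

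For the converse, in the non-rational situation the fixed field is a conic bundle over a $k$-rational base whose generic fibre is the Severi--Brauer variety of the quaternion algebra $(a, -1)_k$ or $(a, -b)_k$; by Amitsur's theorem the function field of a non-split Severi--Brauer variety is not $k$-unirational, and nontriviality of $\fn{Br}(k)$ forces $k$ to be infinite. The principal obstacle is the extensive bookkeeping for the intermediate pairs $(G/N, HN/N)$ in the thirteen conjugacy classes of finite subgroups of $GL_2(\bm{Z})$, ensuring that every non-exceptional case admits explicit rational generators; a secondary technical point is verifying that the exceptional normal form arises via a clean change of variables so that the conic-bundle structure and the norm-residue criterion emerge as claimed.
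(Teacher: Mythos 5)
Your overall route is the paper's route: reduce modulo $N$ via Proposition~\ref{p1.6}, dispose of $H=G$ and $H=\{1\}$ by Theorems~\ref{t1.13} and~\ref{t1.11}, run through the finite subgroups of $GL_2(\bm{Z})$ together with the admissible normal subgroups $HN/N$, and isolate $(C_4,\langle -I\rangle)$ and $(D_4,\langle -I\rangle)$ as the only surviving obstructions. Your explicit treatment of exceptional case (i) is correct and checks out: $p,q,r,s$ are indeed $\sigma$-invariant, they generate the degree-$4$ subfield of the $(C_2)^3$-cover, the conic $r^2=a(p^2-4)$ has the rational point $(2,0)$, and $(a,4a)_k=(a,-1)_k$; this is in substance the paper's appeal to Lemma~\ref{l4.1}(1) (quoted from [\cite{HKO}]) made self-contained. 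The non-unirationality argument via the rational-iff-unirational property of conic/Severi--Brauer function fields, and Wedderburn for the infinitude of $k$, also matches the paper's use of [\cite{HKO}, Theorem 2.2] and [\cite{Se}, p.~160].

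The one step I would not accept as stated is the uniform disposal of ``every reducible integral representation'' by restricting to a $G/N$-stable rank-one sublattice $\bm{Z}z_1$ and then ``applying Proposition~\ref{p1.7} to the remaining variable.'' For a reducible but indecomposable lattice (e.g.\ $V_4^{(2)}=\langle\tau,-I\rangle$ with $\tau$ the swap), the stable rank-one sublattices span only a finite-index sublattice, so $K(z_1,z_2)\subsetneq K(x,y)$ and the invariant field is not recovered this way; if instead you complete $z_1$ to a $\bm{Z}$-basis $z_1,z_2$ of the lattice, then $\sigma(z_2)=z_1^{c}z_2^{\pm1}$, so over $L=K(z_1)$ the action is quasi-monomial with \emph{non-constant} coefficients, and what applies is Proposition~\ref{p1.7}(2) --- which has its own norm-residue exceptional case over $L^G$, and a negative answer there is inconclusive for rationality over $k$ (rationality over $k$ does not require rationality over the intermediate field $K(z_1)^G$). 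The paper settles these (admittedly easy) subcases by the explicit invariant generators of Lemmas~\ref{l3.1} and~\ref{l3.2} instead; since only $V_4^{(1)}$ and $V_4^{(2)}$ among the eight relevant groups are reducible, the repair is local. Finally, be aware that the irreducible non-exceptional cases ($S_3^{(1)}, S_3^{(2)}, C_6, D_6$ and the remaining $D_4$ subcases), which you compress into ``direct invariant calculations,'' carry most of the computational weight of the actual proof (Lemmas~\ref{l3.2} and~\ref{l3.3} and the case-by-case verifications of Section~4), so the plan is sound but far from executed there.
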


The following definition gives an equivalent definition of
quasi-monomial actions, which will be used in our second main
result. This definition follows the approach of Saltman's
definition of twisted multiplicative actions [\cite{Sa1,Sa2};
\cite{Ka4}, Definition 2.2].

\begin{defn} \label{d1.9}
Let $G$ be a finite group. A $G$-lattice $M$ is a finitely
generated $\bm{Z}[G]$-module which is $\bm{Z}$-free as an abelian
group, i.e.\ $M=\bigoplus_{1\le i\le n} \bm{Z}\cdot x_i$ with a
$\bm{Z}[G]$-module structure. Let $K/k$ be a field extension such
that $G$ acts on $K$ with $K^G=k$. Consider a short exact sequence
of $\bm{Z}[G]$-modules $\alpha: 1\to K^{\times}\to M_\alpha \to
M\to 0$ where $M$ is a $G$-lattice and $K^{\times}$ is regarded as
a $\bm{Z}[G]$-module through the $G$-action on $K$. The
$\bm{Z}[G]$-module structure (written multiplicatively) of
$M_\alpha$ may be described as follows : For each $x_j\in M$
(where $1\le j\le n$), take a pre-image $u_j$ of $x_j$. As an
abelian group, $M_\alpha$ is the direct product of $K^{\times}$
and $\langle u_1, \ldots, u_n \rangle$. If $\sigma \in G$ and
$\sigma\cdot x_j=\sum_{1\le i\le n} a_{ij} x_i \in M$, we find
that $\sigma\cdot u_j=c_j(\sigma) \cdot \prod_{1\le i\le n}
u_i^{a_{ij}} \in M_\alpha$ for a unique $c_j(\sigma)\in
K^{\times}$ determined by the group extension $\alpha$.

Using the same idea, once a group extension $\alpha:1\to
K^{\times}\to M_\alpha \to M\to 0$ is given, we may define a
quasi-monomial action of $G$ on the rational function field
$K(x_1,\ldots,x_n)$ as follows : If $\sigma\cdot x_j=\sum_{1\le
i\le n} a_{ij} x_i \in M$, then define $\sigma\cdot
x_j=c_j(\sigma)\prod_{1\le i\le n} x_i^{a_{ij}} \in
K(x_1,\ldots,x_n)$ and $\sigma\cdot \alpha =\sigma(\alpha)$ for
$\alpha\in K$ where $\sigma(\alpha)$ is the image of $\alpha$
under $\sigma$ via the prescribed action of $G$ on $K$. This
quasi-monomial action is well-defined (see [\cite{Sa1}, p.538] for
details). The field $K(x_1,\ldots,x_n)$ with such a $G$-action
will be denoted by $K_\alpha(M)$ to emphasize the role of the
extension $\alpha$; its fixed field is denoted as $K_\alpha(M)^G$.
We will say that $G$ acts on $K_\alpha(M)$ by quasi-monomial
$k$-automorphisms.

If $k=K$, then $k_\alpha(M)^G$ is nothing but the fixed field
associated to the monomial action discussed in Example
\ref{ex1.3}.

If the extension $\alpha$ splits, then we may take
$u_1,\ldots,u_n\in M_\alpha$ satisfying that $\sigma\cdot
u_j=\prod_{1\le i\le n} u_i^{a_{ij}}$. Hence the associated
quasi-monomial action of $G$ on $K(x_1,\ldots,x_n)$ becomes a
purely quasi-monomial action. In this case, we will write
$K_\alpha(M)$ and $K_\alpha(M)^G$ as $K(M)$ and $K(M)^G$
respectively (the subscript $\alpha$ is omitted because the
extension $\alpha$ plays no important role). We will say that $G$
acts on $K(M)$ by purely quasi-monomial $k$-automorphisms. Again
$k(M)^G$ is the fixed field associated to the purely monomial
action discussed in Example \ref{ex1.3}.
\end{defn}

Now we arrive at the second main result of this paper. As
mentioned in Example \ref{ex1.3}, the rationality problem of
$k(M)^G$ where $M$ is a $G$-lattice of $\bm{Z}$-rank 4 is still
unsolved. With the aid of Theorem \ref{t1.8}, we are able to show
that $k(M)^G$ is $k$-rational whenever $M$ is a decomposable
$G$-lattice of $\bm{Z}$-rank 4. From the list of [\cite{BBNWZ}],
there are 710 finite subgroups in $GL_4(\bm{Z})$ up to conjugation
(i.e. there are 710 lattices of $\bm{Z}$-rank 4); the total number
of decomposable ones is 415. Here is the precise statement of our
result.

\begin{theorem} \label{t1.10}
Let $k$ be a field, $G$ be a finite group, $M$ be a $G$-lattice
with $\fn{rank}_{\bm{Z}} M=4$ such that $G$ acts on $k(M)$ by
purely monomial $k$-automorphisms. If $M$ is decomposable, i.e.\
$M=M_1\oplus M_2$ as $\bm{Z}[G]$-modules where $1\le
\fn{rank}_{\bm{Z}} M_1 \le 3$, then $k(M)^G$ is $k$-rational.
\end{theorem}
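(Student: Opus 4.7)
The plan is to use the hypothesis that $M=M_1\oplus M_2$ is decomposable in order to reduce the proof of Theorem~\ref{t1.10} to the purely quasi-monomial results already established in Proposition~\ref{p1.7} and Theorem~\ref{t1.8}. Write $(r_1,r_2)=(\fn{rank}_{\bm{Z}}M_1,\fn{rank}_{\bm{Z}}M_2)$ with $r_1+r_2=4$ and $1\le r_i\le 3$; after relabeling we may assume $r_1\ge r_2$, so $(r_1,r_2)\in\{(3,1),(2,2)\}$. Set $K=k(M_1)$ and $F=K^G$. Because $G$ acts on $k(M)$ by purely monomial $k$-automorphisms and $M=M_1\oplus M_2$ as $\bm{Z}[G]$-modules, the induced action of $G$ on $K(M_2)=k(M)$ is purely quasi-monomial over $F$ in the sense of Definition~\ref{d1.1}; without loss of generality one may replace $G$ by its quotient by the kernel of the action on $k(M)$, so the action on $k(M)$ is faithful.

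In the case $(r_1,r_2)=(3,1)$ the action on $K(x)$ is purely quasi-monomial in one variable over $F$, so Proposition~\ref{p1.7}(1) gives that $K(x)^G$ is $F$-rational, and Theorem~\ref{t1.14} gives that $F=k(M_1)^G$ is $k$-rational (since $\fn{rank}_{\bm{Z}}M_1=3$). Composing these yields $k(M)^G$ $k$-rational.

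In the case $(r_1,r_2)=(2,2)$, Theorem~\ref{t1.13} gives that $F=k(M_1)^G$ is $k$-rational, so it suffices to prove that $K(x_1,x_2)^G$ is $F$-rational. Applying Theorem~\ref{t1.8} with $N=\ker(G\curvearrowright M_2)$ and $H=\ker(G\curvearrowright M_1)$, this rationality holds unless the pair $(G/N,HN/N)$ is isomorphic to $(C_4,C_2)$ or $(D_4,C_2)$. To dispose of the exceptional configurations I would first exploit the symmetry of the decomposition: viewing $k(M)$ instead as $k(M_2)(M_1)$, Theorem~\ref{t1.8} can be re-applied with the roles of $N$ and $H$ interchanged, and in many sub-cases the swapped pair $(G/H,NH/H)$ is no longer exceptional. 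A short order count (using $H\cap N=1$ from the faithfulness reduction above) eliminates most of the remaining group structures this way; the residual configurations in which \emph{both} the original and the swapped pair stay exceptional force a very restrictive structure on $G$ (essentially $G\simeq C_4\times C_2$ or $D_4\times C_2$ with $M_1,M_2$ related by a central twist of a common $C_4$- or $D_4$-action on $\bm{Z}^2$).

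For each such residual configuration one must then verify by a direct computation that the norm-residue symbol $(a,-1)_F=0$ or $(a,-b)_F=0$ from Theorem~\ref{t1.8}(i)--(ii) vanishes. The plan is to use that $K^N=k(M_1)^N$ is itself $k$-rational (Theorem~\ref{t1.13}), which pins down $a$ (and $b$) as specific elements of $F$ arising from monomial eigenvectors of the rank-$2$ lattice $M_1$; from this explicit form one exhibits $-1$ (resp.\ $-b$) as a norm from $F(\sqrt{a})$ by a concrete quadratic-form identity. The main obstacle of the proof is precisely this residual norm-residue verification: it requires a careful case-by-case analysis of the $2$-dimensional $C_4$- and $D_4$-lattices in $GL_2(\bm{Z})$ together with the associated Brauer-group computation, and constitutes the bulk of the technical work.
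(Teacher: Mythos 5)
Your proof follows the paper's own argument almost exactly through the reduction stage: the $(3,1)$ case by splitting off the rank-one summand (the paper uses Lemma \ref{l5.1} where you use Proposition \ref{p1.7}(1); both work), and, in the $(2,2)$ case, the double application of Theorem \ref{t1.8} with $K=k(M_1)$ and $K'=k(M_2)$, the faithfulness reduction giving $N_1\cap N_2=\{1\}$, and the order count forcing $N_1\simeq N_2\simeq C_2$ and $G/N_1\simeq G/N_2\simeq C_4$ or $D_4$ --- this is precisely Steps 1 and 2 of the paper's proof. You genuinely diverge only in the residual configurations. The paper never touches the norm residue symbols there: writing $k(x_1,x_2)^{\langle\sigma_1^2\rangle}=k(s,t)$ with $\sigma_1\colon s\mapsto 1/s$, $t\mapsto -1/t$, it observes that $u=(s+1)/(s-1)$ satisfies $\sigma_2(u)=-u$, splits $u$ off by Lemma \ref{l5.1}, and is left with the three-variable field $k(y_1,y_2,t)^{\langle\sigma_2\rangle}$, which it treats as a one-dimensional torus over the two-dimensional monomial quotient $k(z_2,t)^{\langle\sigma_2\rangle}$ (rational by Theorem \ref{t1.13}); no Brauer-group computation is needed. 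Your route --- checking that the obstruction from Theorem \ref{t1.8} vanishes over $F=k(x_1,x_2)^G$ --- is a legitimate alternative, and it in fact succeeds with far less case analysis than you anticipate: for $C_4$ one has $K^{N_2}=k(s,t)=F(\sqrt{a})$ with $\sqrt{a}=(s+1)/(s-1)$, and $-1=t\cdot\sigma_1(t)=N_{k(s,t)/F}(t)$, so $(a,-1)_F=0$; for $D_4$ one may take $\sqrt{a}=(s+1)/(s-1)$ and $\sqrt{b}=t-\tfrac{1}{t}$ (matching the normal form of Theorem \ref{t1.8}(ii)), and then $-b=(t^2-1)\cdot\sigma_1(t^2-1)=N_{F(\sqrt{a})/F}(t^2-1)$, so $(a,-b)_F=0$. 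The one caveat is that your write-up defers exactly this decisive verification (``the bulk of the technical work''), so as it stands the proposal is a correct plan rather than a complete proof; the two norm identities above are what is needed to close it.
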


We remark that the analogous question of the above theorem for the
case of algebraic tori (i.e. $G \simeq Gal(K/k)$ and $M$ is a
decomposable $G$-lattice) is not so challenging, because the
rationality of $K(M)^G$ can be reduced to those of $K(M_1)^G$ and
$K(M_2)^G$ (see Theorem \ref{t6.5}). On the other hand, we may
adapt the proof of Theorem \ref{t1.8} to study the rationality of
$k(M)^G$ when $\fn{rank}_{\bm{Z}}M=5$ and $M$ is some special
decomposable $G$-lattice. See Theorem \ref{t6.2}.

We organize this paper as follows. The proof of Propositions
\ref{p1.6} and \ref{p1.7} is given in Section 2. Section 3
contains a list of all finite subgroups of $GL_2(\bm{Z})$ and
three rationality results which will be used in Section 4. Lemma
\ref{l3.3} is of interests itself. The proof of Theorem \ref{t1.8}
is given in Section 4. Section 5 contains the proof of Theorem
\ref{t1.10}. Section 6 is devoted to a 5-dimensional rationality
problem.

\begin{idef}{\indent Standing notations.}
Throughout this paper, $G$ is always a finite group.
$K(x_1,\ldots$, $x_n)$ denotes the rational function field of $n$
variables over a field $K$; when $n=1$, we will write $K(x)$; when
$n=2$, we will write $K(x,y)$. For a field $K$, $K^{\times}$
denote the set of all non-zero elements of $K$. As notations of
groups, $C_n$ is the cyclic group of order $n$, $D_n$ is the
dihedral group of order $2n$, $S_n$ is the symmetric group of
degree $n$, $V_4$ is the Klein four group, i.e.\ $V_4 \simeq
C_2\times C_2$.

Recall the definition of (purely) quasi-monomial actions in
Definition \ref{d1.1} and Definition \ref{d1.9}. In particular,
whenever we say that $G$ acts on $K(x_1,\ldots,x_n)$ (or $K_\alpha
(M)$) by quasi-monomial $k$-automorphisms, it is understood that
the group $G$ and the field extension $K/k$ together with the
$G$-action on $K$ satisfy the assumptions in Definitions
\ref{d1.1} and \ref{d1.9}. The definition of (purely) monomial
actions is recalled in Example \ref{ex1.3} and Definition
\ref{d1.9}. The reader should also keep in mind the group
homomorphism $\rho_{\ul{x}}: G\to GL_n(\bm{Z})$ in Definition
\ref{d1.5}.
\end{idef}

\section{Proof of Proposition \ref{p1.6} and Proposition \ref{p1.7}}

\begin{proof}[\indent Proof of Proposition \ref{p1.6}] ~

Let $G$ act on $K(x_1,\ldots,x_n)$ by quasi-monomial $k$-automorphisms.

Without loss of generality, we may assume that $G\subset
\fn{Aut}_k(K(x_1,\ldots,x_n))$. In particular, $G$ acts faithfully
on $K(x_1,\ldots,x_n)$. Consider the group homomorphism
$\rho_{\ul{x}}: G\to GL_n(\bm{Z})$ in Definition \ref{d1.5}.

\bigskip
Step 1.
Define $N=\fn{Ker}(\rho_{\ul{x}})=\{\sigma\in G: \frac{\sigma\cdot x_j}{x_j} \in K^{\times}$ for $1\le j\le n\}$.

Define $N_0=\{\sigma\in N:\sigma(\alpha)=\alpha$ for any $\alpha\in K\}$.

If the action is a monomial action, then $k=K$ and $N_0=N$.

Clearly $N\triangleleft G$. We will show that $N_0\triangleleft G$.

If $\sigma\in N_0$, we may write $\sigma\cdot x_j=\alpha_jx_j$ for some $\alpha_j\in K^{\times}$.
For any $\tau\in G$, if $\tau\cdot x_j=c_j(\tau)\prod_{1\le i\le n} x_i^{a_{ij}}$ for some $c_j(\tau)\in K^{\times}$ and $[a_{ij}]_{1\le i,j\le n}\in GL_n(\bm{Z})$,
define $\sigma'\in N_0$ by $\sigma'\cdot \alpha=\alpha$ for all $\alpha \in K$ and $\sigma'\cdot x_j=\beta_jx_j$ with
$\beta_j=\tau^{-1}(c_j(\tau)^{-1})\cdot\tau^{-1}\sigma(c_j(\tau))\cdot \prod_{1\le i\le n} \tau^{-1}(\alpha_i)^{a_{ij}}$.
It is routine to verify $\tau^{-1}\sigma\tau=\sigma'$.
Hence $N_0\triangleleft G$ also.

\bigskip
Step 2.
We will determine $K(x_1,\ldots,x_n)^{N_0}$.

For any $\sigma\in N_0$, write $\sigma\cdot x_j=c_j(\sigma)x_j$ with $c_j(\sigma)\in K^{\times}$.
Since $N_0$ is a finite group, all the $c_j(\sigma)$ (where $\sigma\in N_0$, $1\le j\le n$) are roots of unity.
Hence they generate a cyclic group $\langle \zeta \rangle$ where $\zeta$ is some root of unity.

Choose generators $\sigma_1,\ldots,\sigma_m$ of $N_0$, i.e.\ $N_0=\langle\sigma_1,\ldots,\sigma_m\rangle$.
Define a group homomorphism
\begin{align*}
\Phi :{} & \{x_1^{\lambda_1}x_2^{\lambda_2}\cdots x_n^{\lambda_n}:\lambda_i\in \bm{Z}\} \to \langle \zeta \rangle^m \\
& X=x_1^{\lambda_1}x_2^{\lambda_2}\cdots x_n^{\lambda_n} \mapsto \left(\frac{\sigma_1(X)}{X},\frac{\sigma_2(X)}{X},\ldots,\frac{\sigma_m(X)}{X}\right).
\end{align*}

Define $M=\fn{Ker}(\Phi)$. Since $\{x_1^{\lambda_1}\cdots
x_n^{\lambda_n}:\lambda_i\in\bm{Z}\}\simeq \bm{Z}^n$, it follows
$\fn{Ker}(\Phi)\simeq \bm{Z}^n$. Thus
$M=\{y_1^{\nu_1}y_2^{\nu_2}\cdots y_n^{\nu_n}: \nu_i\in \bm{Z}\}$
where each $y_i$ is of the form $x_1^{e_1}x_2^{e_2}\cdots
x_n^{e_n}$ for some $e_j\in \bm{Z}$.

It is not difficult to see that $K(x_1,\ldots,x_n)^{N_0}=K(y_1,\ldots,y_n)$.

Moreover, $G/N_0$ acts on $K(y_1,\ldots,y_n)$ by quasi-monomial
$k$-automorphisms. For any $\tau\in G$, we will show that
$\tau\cdot y_i=\beta\cdot y_1^{\nu_1}y_2^{\nu_2}\cdots
y_n^{\nu_n}$ for some $\beta\in K^{\times}$, some $\nu_i\in
\bm{Z}$.

Since we may write $y_j=x_1^{e_1}x_2^{e_2}\cdots x_n^{e_n}$, it
follows that $\tau\cdot y_j=\beta'\cdot x_1^{e'_1} x_2^{e'_2}
\cdots x_n^{e'_n}$ for some $\beta' \in K^{\times}$, some $e'_i\in
\bm{Z}$. It remains to show that $x_1^{e'_1} x_2^{e'_2} \cdots
x_n^{e'_n} =y_1^{\mu_1} y_2^{\mu_2} \cdots y_n^{\mu_n}$ for some
$\mu_i\in \bm{Z}$, i.e.\ $x_1^{e'_1}x_2^{e'_2}\cdots x_n^{e'_n}
\in \fn{Ker} (\Phi)=\langle$ $y_1,\ldots,y_n\rangle$, which is
equivalent to showing $\sigma(x_1^{e'_1}x_2^{e'_2}\cdots
x_n^{e'_n})=x_1^{e'_1}\cdots x_n^{e'_n}$ for all $\sigma\in N_0$.

From $\tau\cdot y_j=\beta' x_1^{e'_1}\cdots x_n^{e'_n}$, if
$\sigma\in N_0$, we get $\sigma\tau
(y_j)=\sigma(\beta')\sigma(x_1^{e'_1}\cdots
x_n^{e'_n})=\beta'\sigma(x_1^{e'_1}\cdots x_n^{e'_n})$ because
$\sigma(\beta')=\beta'$. On the other hand, $\sigma\tau
(y_j)=\tau(\tau^{-1}\sigma \tau)
(y_j)=\tau(\tau^{-1}\sigma\tau(y_j))=\tau\cdot y_j$ because
$\tau^{-1}\sigma\tau\in N_0$. We get $\sigma(x_1^{e'_1}\cdots
x_n^{e'_n})=x_1^{e'_1}\cdots x_n^{e'_n}$ for any $\sigma\in N_0$.
Done.

\bigskip
Step 3. We will determine the field
$K(x_1,\ldots,x_n)^N=\{K(x_1,\ldots,x_n)^{N_0}\}^{N/N_0}$
$=K(y_1,\ldots,y_n)^{N/N_0}$. If $N_0=N$, nothing is to be proved
and we may proceed to Step 4. On the other hand, if the action of
$G$ is purely quasi-monomial, then $\sigma \cdot x_j=x_j$ for any
$\sigma \in N$ from the definition of the subgroup $N$. Thus
$K(x_1,\ldots,x_n)^N = K^N(x_1,\ldots,x_n)$. This confirms that
each $y_i$ is of the form $x_1^{e_1}x_2^{e_2}\cdots x_n^{e_n}$ in
(i) of the statement of this proposition.

For the remaining part of this step, we assume that $N_0 \neq N$.

For any $\sigma \in N$, since $\frac{\sigma\cdot x_j}{x_j} \in
K^{\times}$ for $1\le j\le n$, it follows that $\frac{\sigma\cdot
y_i}{y_i}\in K^{\times}$ for $1\le i\le n$. Write $\sigma\cdot
y_i=c_i(\sigma)y_i$ for $c_i(\sigma)\in K^{\times}$, $1\le i\le
n$.

Note that $c_i\in H^1(N/N_0,K^{\times})$, i.e.\ for any $\sigma_1,\sigma_2\in N/N_0$,
$c_i(\sigma_1\sigma_2)=\sigma_1(c_i(\sigma_2))\cdot c_i(\sigma_1)$.
Since $N/N_0$ is faithful on $K$,
it follows that $H^1(N/N_0,K^{\times})=0$ and therefore there is some element $a_i\in K^{\times}$ such that $c_i(\sigma)=\frac{a_i}{\sigma(a_i)}$ for all $\sigma\in N/N_0$.

Define $z_i=a_iy_i$.
It follows that $K(y_1,\ldots,y_n)=K(z_1,\ldots,z_n)$ and $\sigma\cdot z_i=z_i$ for all $\sigma\in N/N_0$, for all $1\le i\le n$.
Hence $K(y_1,\ldots,y_n)^{N/N_0}=K^{N/N_0}(z_1,\ldots,z_n)=K^N(z_1,\ldots,z_n)$.

In summary, if $N\ne \{1\}$, we have reduced the group from $G$ to $G/N$ and $G/N$ acts on $K^N(z_1,\ldots,z_n)$ by quasi-monomial $k$-automorphisms.

\bigskip
Step 4. Consider $\rho_{\ul{z}}: G/N \to GL_n(\bm{Z})$. If
$\fn{Ker}(\rho_{\ul{z}})$ is non-trivial, we continue the above
process and reduce the group from $G/N$ to $G/N_1$ where $N
\subsetneq N_1$. Hence the result.
\end{proof}

\begin{proof}[\indent Proof of Proposition \ref{p1.7}] ~

(1) Suppose that $G$ acts on $K(x)$ by purely quasi-monomial $k$-automorphisms.

For any $\sigma\in G$, $\sigma\cdot x=x$ or $\frac{1}{x}$.

If $\sigma\cdot x=x$ for all $\sigma\in G$, then $K(x)^G=K^G(x)=k(x)$ is $k$-rational.

From now on, we consider the case that $\sigma\cdot x=
\frac{1}{x}$ for some $\sigma\in G$.

Define $N=\{\tau\in G:\tau\cdot x=x\}$. Then $G=N\cup \sigma N$.

It follows that $K(x)^N=K^N(x)$ and $[K^N:k]\le 2$.
If $K^N=k$, then $K(x)^G=\{K(x)^N\}^{\langle\sigma\rangle}=\{K^N(x)\}^{\langle\sigma\rangle}=k(x)^{\langle\sigma\rangle}=k(x+\frac{1}{x})$ is $k$-rational.

It remains to consider the case $K^N=k(\alpha)$ is a quadratic
separable extension.

\begin{Case}{1}
$\fn{char}k\ne 2$, we may assume that $\alpha=\sqrt{a}$ for some $a\in k^{\times}$. Hence $\sigma\cdot\alpha=-\alpha$.
Define $y=\frac{1-x}{1+x}$. Then $\sigma\cdot y=-y$.
It follows that $k(\alpha)(x)^{\langle\sigma\rangle}=k(\alpha)(y)^{\langle\sigma\rangle}=k(\alpha)(\alpha y)^{\langle\sigma\rangle}=k(\alpha y)$ is $k$-rational.
\end{Case}

\begin{Case}{2}
$\fn{char}k= 2$, we may assume that $\alpha^2+\alpha=b\in k$.
Hence $\sigma\cdot\alpha=\alpha+1$. Define $y=\frac{1}{1+x}$. Then
$\sigma\cdot y=y+1$. It follows that
$k(\alpha)(x)^{\langle\sigma\rangle}=k(\alpha)(y)^{\langle\sigma\rangle}=k(\alpha)^{\langle\sigma\rangle}(y+\alpha)=k(y+\alpha)$
is $k$-rational.
\end{Case}

\bigskip
(2) Suppose that $G$ acts on $K(x)$ by quasi-monomial $k$-automorphisms.

Apply Proposition \ref{p1.6}.
Find a normal subgroup $N$ of $G$ such that (i) $K(x)^N=K^N(y)$ where $y=ax^m$ for some $a\in K^{\times}$ and some $m\in \bm{Z}$,
(ii) $\rho_{\ul{y}}: G/N\to GL_1(\bm{Z})=\{\pm 1\}$ is injective,
and (iii) $G/N$ acts on $K^N(y)$ by quasi-monomial $k$-automorphisms.

The only non-trivial case is the situation $G/N=
\langle\sigma\rangle \simeq C_2$ and $\sigma$ acts non-trivially
on $K^N$. In this case, we may write $K^N=k(\alpha)$ and
$\sigma\cdot y=\frac{b}{y}$ for some $b\in k(\alpha)^{\times}$.

Since $\sigma^2=1$ and $\sigma\cdot y=\frac{b}{y}$, it follows
that $b\in k^{\times}$.

Consider the 1-cocycle $\beta \in H^1(G/N,PGL_2(k(\alpha)))$ defined by
\[
\beta(1)=\begin{pmatrix} 1 & 0 \\ 0 & 1 \end{pmatrix}, ~
\beta(\sigma)=\begin{pmatrix} 0 & b \\ 1 & 0 \end{pmatrix} \in PGL_2(k(\alpha)).
\]

The Brauer-field $F_{1,k}(\beta)$ defined in Example \ref{ex1.4} is nothing but $k(\alpha)(y)^{\langle\sigma\rangle}$.

Note that $F_{1,k}(\beta)$ is $k$-rational if and only if the cyclic algebra $A(\beta)$ associated to $\beta$ is isomorphic to the matrix algebra $M_2(k)$ [\cite{Se}, page 160].

Let $\gamma$ be the image of $\beta$ in $H^1(G/N,PGL_2(k(\alpha)))\to H^2(G/N,k(\alpha)^{\times})$.
It is not difficult to verify that $\gamma$ is the normalized 2-cocycle with $\gamma(\sigma,\sigma)=b$.
It follows that the cyclic algebra $A(\beta)$ can be defined as
\[
A(\beta)=k(\alpha)\oplus k(\alpha)\cdot u
\]
with the relations $u^2=b$ and $u\cdot t=\sigma(t)\cdot u$ for any
$t\in k(\alpha)$.

When $\fn{char}k\ne 2$, we may choose $\alpha$ such that
$\alpha=\sqrt{a}$ for some $a\in k^{\times}$. Hence
$\sigma(\alpha)=-\alpha$ and $A(\beta)$ is just the quaternion
algebra $[a,b]_k$.

When $\fn{char}k=2$, we may choose $\alpha$ such that
$\alpha^2+\alpha=a\in k$. Hence $\sigma(\alpha)=\alpha+1$ and
$A(\beta)$ is the quaternion algebra $[a,b)_k$.

It follows that $A(\beta)\simeq M_2(k)$ if and only if the
associated norm residue $2$-symbol $(a,b)_k=0$ (if $\fn{char} k
\ne 2$), and $[a,b)_k=0$ (if $\fn{char}k=2$).

\bigskip
Finally suppose that the Brauer-field
$k(\alpha)(y)^{\langle\sigma\rangle}$ is not $k$-rational. By the
above proof, some norm residue $2$-symbol is not zero. Let $A$ be
the quaternion $k$-algebra corresponding to this norm residue
symbol. It follows that the similarity class of $A$ in $Br(k)$ is
not zero where $Br(k)$ is the Brauer group of the field $k$. Hence
$k$ is an infinite field because $Br(k')=0$ for any finite field
$k'$ by Wedderburn's Theorem [\cite{Dr}, page 73].

For the other assertion, since the Brauer-field
$k(\alpha)(y)^{\langle\sigma\rangle}$ is not $k$-rational, it is
not $k$-unirational by [\cite{Se}, page 160] (also see Example
\ref{ex1.4}).
\end{proof}

\section{Finite subgroups of \boldmath{$GL_2(\bm{Z})$}}

By [\cite{BBNWZ,GAP}], there are exactly 13 finite subgroups
contained in $GL_2(\bm{Z})$ up to conjugation. We list these
groups as follows.

\begin{theorem} \label{t3.1}
Type A. Cyclic groups
\begin{align*}
C_1 & := \{I\}, & C_2^{(1)} & := \langle -I \rangle, & C_2^{(2)} & := \langle \lambda \rangle, & C_2^{(3)} & :=\langle \tau \rangle, \\
C_3 & := \langle \sigma^2\rangle, & C_4 & := \langle\sigma\rangle, & C_6 & := \langle \rho \rangle,
\end{align*}
where
\[
I=\begin{bmatrix} 1 & 0 \\ 0 & 1 \end{bmatrix},\quad \lambda=\begin{bmatrix} 1 & 0 \\ 0 & -1 \end{bmatrix}, \quad
\tau=\begin{bmatrix} 0 & 1 \\ 1 & 0 \end{bmatrix},\quad \sigma=\begin{bmatrix} 0 & -1 \\ 1 & 0 \end{bmatrix}, \quad
\rho=\begin{bmatrix} 1 & -1 \\ 1 & 0 \end{bmatrix}.
\]

Type B. Non-cyclic groups :
\begin{align*}
V_4^{(1)} &:= \langle\lambda,-I\rangle, & V_4^{(2)} &:= \langle\tau,-I\rangle, & S_3^{(1)} &:= \langle \rho^2,\tau\rangle, & S_3^{(2)} &:= \langle \rho^2,-\tau\rangle, \\
D_4 &:= \langle \sigma,\tau \rangle, & D_6 &:= \langle\rho,\tau\rangle.
\end{align*}
\end{theorem}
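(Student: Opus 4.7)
The plan is to reduce to subgroups of two explicit maximal finite subgroups of $GL_2(\bm{Z})$ and then enumerate subgroups up to integral conjugacy. First I would bound the possible orders of elements: if $A\in GL_2(\bm{Z})$ has finite order $n$, its minimal polynomial over $\bm{Q}$ divides $x^n-1$ and has degree at most $2$, so it is a product of cyclotomic polynomials $\Phi_d$ with $\sum\phi(d)\le 2$, forcing $n\in\{1,2,3,4,6\}$. In particular every cyclic subgroup of $GL_2(\bm{Z})$ has order in $\{1,2,3,4,6\}$ and its $\bm{Q}$-conjugacy class is pinned down by its characteristic polynomial.

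Next I would exploit the fact that any finite $G\le GL_2(\bm{Z})$ preserves a positive-definite integral symmetric bilinear form on $\bm{Z}^2$, obtained by $G$-averaging the standard inner product. A Minkowski reduction via a $GL_2(\bm{Z})$-change of basis brings this form into $ax^2+bxy+cy^2$ with $0\le 2b\le a\le c$, and the integral automorphism group of any such reduced form is contained in either $D_4=\langle\sigma,\tau\rangle$ or $D_6=\langle\rho,\tau\rangle$ (the symmetry groups of the square and hexagonal lattices, which are the only rank-$2$ lattices carrying an automorphism of order greater than $2$). Hence, up to conjugation, every finite $G$ sits inside $D_4$ or $D_6$, and enumerating the subgroups of these two groups produces the $13$ candidates on the list.

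The main obstacle is showing that the listed representatives are pairwise non-conjugate in $GL_2(\bm{Z})$, since several are $GL_2(\bm{Q})$-conjugate. I would distinguish them by integral lattice invariants. Order-$2$ subgroups are first split by the characteristic polynomial of the generator: $(x+1)^2$ gives $C_2^{(1)}=\langle -I\rangle$, while $(x-1)(x+1)$ gives a generator with fixed eigenlattice $L_+$ and negated eigenlattice $L_-$; then the index $[\bm{Z}^2:L_+\oplus L_-]$ equals $1$ for $\lambda$ and $2$ for $\tau$, separating $C_2^{(2)}$ from $C_2^{(3)}$. The two $V_4$ classes are distinguished by the integral type of their non-central involution, and the two $S_3$ classes by whether the three reflection fixed-lines generate all of $\bm{Z}^2$ (case $S_3^{(1)}$) or only a sublattice of index $3$ (case $S_3^{(2)}$). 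Once these invariants are set up, verifying that no further coincidences occur among the thirteen candidates is a routine but lengthy case analysis.
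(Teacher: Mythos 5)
Your proposal is correct in outline, and it is genuinely different from what the paper does: the paper offers no proof of Theorem \ref{t3.1} at all, simply citing the crystallographic tables [\cite{BBNWZ}] and a GAP computation. You instead give the classical self-contained argument: the crystallographic restriction via cyclotomic polynomials to bound element orders by $\{1,2,3,4,6\}$, the averaging trick to produce an invariant positive definite integral form, reduction theory to conjugate $G$ into the automorphism group of a reduced form (hence into $D_4$ or $D_6$, the square and hexagonal lattice symmetries), and then lattice-theoretic invariants to separate the classes. What your route buys is a proof one can actually check by hand; what the citation buys is brevity and the (much harder) rank-3 and rank-4 analogues that the paper also needs elsewhere. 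Your non-conjugacy invariants are all correct: the index $[\bm{Z}^2:L_+\oplus L_-]$ is $1$ for $\lambda$ and $2$ for $\tau$, the non-central involutions distinguish $V_4^{(1)}$ from $V_4^{(2)}$, and the sublattice spanned by the reflection axes has index $1$ for $S_3^{(1)}$ and $3$ for $S_3^{(2)}$. Note also that once everything is conjugated into $D_4$ or $D_6$, the classes $C_3$, $C_4$, $C_6$, $D_4$, $D_6$ each occur as exactly one candidate, so no further conjugacy check (e.g.\ via Latimer--MacDuffee or the class number of $\bm{Z}[i]$ and $\bm{Z}[\omega]$) is needed for them.

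Two small corrections. First, your reduction domain $0\le 2b\le a\le c$ for the form $ax^2+bxy+cy^2$ is too restrictive: it excludes the hexagonal form $x^2+xy+y^2$, which is exactly the case producing $D_6$. The Minkowski--Gauss condition for a form written as $ax^2+bxy+cy^2$ is $|b|\le a\le c$ (the factor $2$ belongs to the convention $ax^2+2bxy+cy^2$). Second, the characteristic polynomial pins down only the $GL_2(\bm{Q})$-conjugacy class of a cyclic subgroup, not its $GL_2(\bm{Z})$-class --- which is precisely why the extra integral invariants in your last paragraph are needed; you clearly know this, but the phrasing in the first paragraph could mislead. Neither point affects the soundness of the argument.
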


Note that $\lambda^2=\tau^2=I$, $\sigma^2=\rho^3=-I$, and $\tau\sigma=\lambda$.

\begin{lemma}[{[\cite{HK2}, Lemma 2.7]}] \label{l3.1}
Let $k$ be a field and $-I\in GL_2(\bm{Z})$ act on $k(x,y)$ by a
$k$-automorphism defined as
\[
-I: x\mapsto \tfrac{a}{x},~ y\mapsto \tfrac{b}{y}, ~ a,b\in k^{\times}.
\]
Then $k(x,y)^{\langle -I\rangle}=k(u,v)$ where
\[
u=\frac{x-\frac{a}{x}}{xy-\frac{ab}{xy}}, \quad v=\frac{y-\frac{b}{y}}{xy-\frac{ab}{xy}}.
\]
\end{lemma}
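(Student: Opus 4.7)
The strategy is standard for fixed fields of an involution: first show $k(u,v) \subseteq k(x,y)^{\langle -I\rangle}$, and then prove $[k(x,y):k(u,v)] \le 2$. Combined with $[k(x,y):k(x,y)^{\langle -I\rangle}]=2$, this forces $k(u,v) = k(x,y)^{\langle -I\rangle}$.

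For the easy half, set $A = x - a/x$, $B = y - b/y$, $C = xy - ab/(xy)$. A direct computation shows that $-I$ sends each of $A$, $B$, $C$ to its own negative; for example, $-I$ maps $xy$ to $(a/x)(b/y) = ab/(xy)$, so $-I\cdot C = ab/(xy) - xy = -C$. Hence $u = A/C$ and $v = B/C$ are each a ratio of two anti-invariant elements, and are therefore fixed. This proves $k(u,v) \subseteq k(x,y)^{\langle -I\rangle}$.

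For the bound $[k(x,y):k(u,v)] \le 2$, clear denominators in the definitions of $u$ and $v$ to obtain the polynomial identities
\begin{align*}
u(x^2y^2 - ab) &= y(x^2 - a), \\
v(x^2y^2 - ab) &= x(y^2 - b).
\end{align*}
The first gives $x^2 y(uy - 1) = a(ub - y)$, and the second gives $y^2 x(vx - 1) = b(va - x)$. Using the latter to eliminate $y^2$ in the former and collecting terms, everything collapses to
\[
(x^2 - a)\bigl[(1 - vx)y - ub\bigr] = 0.
\]
Since $x$ is transcendental over $k$ we have $x^2 - a \ne 0$, so $y = bu/(1 - vx)$. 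In particular $k(x,y) = k(u,v,x)$. Substituting this expression for $y$ back into $y^2 x(vx - 1) = b(va - x)$ and simplifying yields the quadratic
\[
v x^2 + (bu^2 - av^2 - 1)x + av = 0
\]
with coefficients in $k(u,v)$. Thus $[k(u,v,x):k(u,v)] \le 2$, which completes the argument.

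The principal technical obstacle is the elimination step that produces the clean factorization $(x^2 - a)[(1 - vx)y - ub] = 0$: it requires careful bookkeeping of cancellations between the two polynomial identities for $u$ and $v$. Once this is obtained, the monic-in-$x$ quadratic over $k(u,v)$ drops out by a direct substitution, and no case analysis on $\fn{char} k$ is needed since every step is a polynomial identity.
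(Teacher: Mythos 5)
Your proof is correct. Note that the paper itself gives no proof of this lemma---it is stated with a citation to [HK2, Lemma~2.7] and the proof is omitted---so there is no in-paper argument to compare against. Your verification is the standard one and is complete: $u$ and $v$ are ratios of the anti-invariants $x-\frac{a}{x}$, $y-\frac{b}{y}$, $xy-\frac{ab}{xy}$, hence fixed; the elimination indeed yields $(x^2-a)\bigl[(1-vx)y-ub\bigr]=0$ (equivalently, one checks directly that $1-vx=\frac{b(x^2-a)}{x^2y^2-ab}$ and $ub=\frac{by(x^2-a)}{x^2y^2-ab}$, so $y=\frac{ub}{1-vx}$); and the resulting quadratic $vx^2+(bu^2-av^2-1)x+av=0$ has nonzero leading coefficient $v$, giving $[k(x,y):k(u,v)]=[k(u,v)(x):k(u,v)]\le 2$, which together with $[k(x,y):k(x,y)^{\langle -I\rangle}]=2$ forces equality.
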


\begin{lemma} \label{l3.2}
Let $k$ be a field and $-I\in GL_2(\bm{Z})$ act on $k(x,y)$ by a
$k$-automorphism defined as
\[
-I: x\mapsto \tfrac{1}{x},~ y\mapsto \tfrac{1}{y}.
\]
Then $k(x,y)^{\langle -I\rangle}=k(s,t)$ where
\begin{equation}
s=\frac{xy+1}{x+y},\quad
t=\begin{cases}
\frac{xy-1}{x-y} & \text{if }\fn{char}k\ne 2, \\[4pt]
\frac{x(y^2+1)}{y(x^2+1)} & \text{if } \fn{char} k=2.
\end{cases} \label{eq1}
\end{equation}
\end{lemma}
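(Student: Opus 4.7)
The plan is to verify directly that $s$ and $t$ lie in $k(x,y)^{\langle -I\rangle}$, and then to establish the reverse degree bound $[k(x,y):k(s,t)]\le 2$. Combined with the trivial fact $[k(x,y):k(x,y)^{\langle -I\rangle}]=2$ (since $-I$ acts nontrivially on $k(x,y)$ and has order $2$), these two ingredients will force equality $k(s,t)=k(x,y)^{\langle -I\rangle}$.

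The invariance of $s$ is immediate from $(-I)(s)=\bigl((1/xy)+1\bigr)/\bigl(1/x+1/y\bigr)=(1+xy)/(x+y)=s$, and the invariance of $t$ in either characteristic follows from the analogous symmetric cancellation after clearing the inner denominators. For the degree bound, the strategy is to use the relation defining $s$ to eliminate $y$, and then use the relation defining $t$ to obtain a quadratic for $x$. From $s(x+y)=xy+1$ one solves $y=(1-sx)/(s-x)$, which shows $k(x,y)=k(s,x)\subseteq k(s,t)(x)$. Substituting this expression for $y$ into $t(x-y)=xy-1$ in odd characteristic and clearing denominators yields
\[
(s-t)x^2-2(1-st)x+(s-t)=0,
\]
so $x$ is of degree at most $2$ over $k(s,t)$. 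In characteristic $2$, the same substitution into the defining relation $t\cdot y(x^2+1)=x(y^2+1)$, combined with the Frobenius-type identity $(1+sx)^2+(s+x)^2=(1+s)^2(1+x)^2$ available when $\fn{char}k=2$, produces after cancellation of $(1+x)^2$ a quadratic equation of the form
\[
st\,x^2+(t+1)(1+s)^2\,x+st=0,
\]
leading to the same conclusion.

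The main obstacle is the characteristic $2$ case: the odd-characteristic formula $(xy-1)/(x-y)$ collapses to $s$ when $\fn{char}k=2$ (since $x-y=x+y$ and $xy-1=xy+1$), which is precisely why a separate formula for $t$ is required. The corresponding elimination is more delicate because one must exploit the perfect-square identities peculiar to characteristic $2$ in order to factor out $(1+x)^2$ and reduce the bidegree to a genuine quadratic in $x$. We note in passing that this lemma could also be derived from Lemma \ref{l3.1} with $a=b=1$, but translating the resulting invariants $u,v$ of Lemma \ref{l3.1} into the cleaner pair $(s,t)$ requires essentially the same elimination, so the direct approach outlined above is no harder.
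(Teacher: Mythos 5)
Your proposal is correct and follows essentially the same route as the paper: the paper also reduces the claim to the degree bound $[k(x,y):k(s,t)]\le 2$ by exhibiting explicit algebraic relations (in characteristic $2$ it writes down a quadratic for $y$ over $k(s,t)$ and expresses $x$ rationally in $s,y$, which is your elimination with the roles of $x$ and $y$ interchanged, consistent under the symmetry $t\mapsto 1/t$), and it cites [HHR; HKY] for the characteristic $\ne 2$ case that you instead verify directly. Your quadratics $(s-t)x^2-2(1-st)x+(s-t)=0$ and $st\,x^2+(t+1)(1+s)^2x+st=0$ check out, so the argument is complete.
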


\begin{proof}
When $\fn{char}k\ne 2$, see [\cite{HHR}, page 1176; \cite{HKY}, Lemma 3.4].
When $\fn{char}k=2$, $s$, $t$, $x$ and $y$ satisfy the relations
\[
y^2+\frac{(s^2+1)(t+1)}{s}y+1=0, \quad x=\frac{sy+1}{s+y}.
\]
Hence the result.
\end{proof}

\begin{lemma} \label{l3.3}
Let $k$ be a field and $D_6=\langle \rho,\tau\rangle$ act on
$k(x,y)$ by purely monomial $k$-automorphisms defined as
\[
\rho: x\mapsto xy,~ y\mapsto \tfrac{1}{x}, \quad \tau: x\mapsto y,~ y\mapsto x.
\]
Then $k(x,y)^{\langle\rho^2\rangle}=k(S,T)$ where
\begin{equation}
\begin{aligned}
S &= \frac{x^2y+xy^2-3xy+1}{x^2y^2-3xy+x+y}, \\
T &= \begin{cases}
\frac{(xy+y+1)(x^2y^2-x^2y+x^2-xy-x+1)}{(xy+x+1)(x^2y^2-3xy+x+y)} & \text{if } \fn{char} k\ne 3, \\[4pt]
\frac{x(x^3y^3+y^3+1)}{y(x^3y^3+x^3+1)} & \text{if } \fn{char}
k=3.
\end{cases}
\end{aligned} \label{eq2}
\end{equation}
Moreover, $D_6/\langle\rho^2\rangle=\langle \rho,\tau\rangle$ acts on $k(S,T)$ by
\[
\rho:S\mapsto \tfrac{1}{S},~T\mapsto \begin{cases} \frac{S+\frac{1}{S}-1}{T}, & \\ \frac{1}{T}, & \end{cases} \quad
\tau: S\mapsto S,~ T\mapsto \begin{cases} \frac{S(S+\frac{1}{S}-1)}{T} & \text{if }\fn{char}k\ne 3, \\
\frac{1}{T} & \text{if } \fn{char} k=3. \end{cases}
\]
\end{lemma}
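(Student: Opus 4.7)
The plan is to verify the explicit formulas by exploiting cyclic symmetry, separating the $\fn{char}\,3$ and $\fn{char}\ne 3$ cases. The structural observation is that $\rho^2 : (x, y) \mapsto (y, 1/(xy))$ cyclically permutes $u = x$, $v = y$, $w = 1/(xy)$ (with $uvw = 1$); $\tau$ transposes $u, v$ and fixes $w$; and $\rho$ interchanges $e_1 = u+v+w$ with $e_2 = uv+vw+wu$. A direct rewriting gives $S = (e_1 - 3)/(e_2 - 3)$, from which $\rho^2(S) = \tau(S) = S$ and $\rho(S) = 1/S$ follow at once.

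The element $T$ is only $\langle\rho^2\rangle$-invariant, not $S_3$-invariant. For $\fn{char} k = 3$, the freshman's dream gives $(xy+y+1)^3 = x^3y^3 + y^3 + 1$ and $(xy+x+1)^3 = x^3y^3 + x^3 + 1$, so $T = (x/y)(A/A')^3$ with $A = xy+y+1$, $A' = xy+x+1$; combined with the orbit formulas $\rho^2(A) = A/(xy)$, $\rho^2(A') = A'/x$, $\rho^2(x/y) = xy^2$, invariance is a short check. For $\fn{char} k \ne 3$, I would factor the numerator of $T$ as $A \cdot ((A')^2 - 3xA)$ (by direct expansion) and the denominator as $A' \cdot xy(e_2 - 3)$, then verify $\rho^2(T) = T$ using the same orbit formulas together with $\rho^2(xy) = 1/x$ and the $S_3$-invariance of $e_2 - 3$.

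Once $k(S, T) \subseteq k(x,y)^{\langle\rho^2\rangle}$ is established, I would prove $[k(x, y) : k(S, T)] = 3$; combined with $[k(x, y) : k(x,y)^{\langle\rho^2\rangle}] = 3$ this forces equality of fields. Since $u, v, w$ are the roots of $Z^3 - e_1 Z^2 + e_2 Z - 1$, it suffices to recover $e_1, e_2$ as rational functions of $S, T$. The relation $S = (e_1 - 3)/(e_2 - 3)$ gives $e_1 = 3 + S(e_2 - 3)$; a second equation comes from the $\tau$-orbit of $T$. Writing $T = T_0 + T_1 \delta$ with $\delta = (u-v)(v-w)(w-u)$ and $T_0, T_1 \in k(e_1, e_2)$, the identity $T \cdot \tau(T) = S^2 - S + 1$ (read off the stated formula for $\tau(T)$) pins down $e_2$ as a rational function of $S, T$. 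The induced $D_6/\langle\rho^2\rangle$-action on $k(S, T)$ is then deduced from $\rho: e_1 \leftrightarrow e_2$, $\tau: e_i \mapsto e_i$, plus direct substitution. The main obstacle is that the two characteristic formulas for $T$ are genuinely different and must be handled separately; the $\fn{char} k \ne 3$ case additionally relies on a careful parametrization of the discriminant quadratic $\delta^2 = D(e_1, e_2)$, rather than the freshman's-dream shortcut available in characteristic $3$.
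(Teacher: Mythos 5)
Your setup is sound and, as far as it goes, cleaner than the paper's: the cyclic orbit $u=x$, $v=y$, $w=1/(xy)$ with $uvw=1$, the identity $S=(e_1-3)/(e_2-3)$, the factorizations of the numerator and denominator of $T$ through $A=xy+y+1$, $A'=xy+x+1$, and the freshman's-dream reduction in characteristic $3$ all check out, and they give a conceptual verification of the $\langle\rho^2\rangle$-invariance of $S,T$ and of the induced $\langle\rho,\tau\rangle$-action (the paper does this part by brute force with computer assistance). The gap is in the generation step, which is the actual content of the lemma. Your mechanism for recovering $e_2$ from $S,T$ is the identity $T\cdot\tau(T)=S^2-S+1$; but this norm already lies in $k(S)$, so it carries no information about $e_2$ beyond what $S$ gives and cannot ``pin down $e_2$ as a rational function of $S,T$.'' The quantity you would actually need is the trace $T+\tau(T)=T+(S^2-S+1)/T\in k(S,T)$, and you would then have to prove that $S$ and $T+\tau(T)$ generate all of $k(e_1,e_2)$ --- a two-dimensional L\"uroth-type assertion about two explicit rational functions which is not automatic and is exactly where the difficulty of the lemma is concentrated. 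The paper attacks this point head-on: it realizes $k(x,y)^{\langle\rho^2\rangle}$ as the function field of the singular surface \eqref{eq4}, namely $\delta^2=\fn{disc}(Z^3-e_1Z^2+e_2Z-1)$ up to normalization, rationalizes it by an explicit chain of blow-ups, and in characteristics $2$ and $3$ instead exhibits explicit relations showing $[k(x,y):k(S,T)]\le 3$. Your sketch defers this entire computation to ``a careful parametrization of the discriminant quadratic,'' which is precisely the proof that is missing.

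Two further points. First, in characteristic $2$ the decomposition $T=T_0+T_1\delta$ with $T_0,T_1\in k(e_1,e_2)$ fails: there $\tau(\delta)=-\delta=\delta$, so $\delta=e_1e_2+1$ is already symmetric and $\{1,\delta\}$ is not a $k(e_1,e_2)$-basis of the quadratic extension $k(x,y)^{\langle\rho^2\rangle}/k(e_1,e_2)$ (which is Artin--Schreier rather than Kummer); this is why the paper treats $\fn{char}k=2$ as a separate case, and your argument would need an analogous separate treatment. Second, even after establishing $e_1,e_2\in k(S,T)$ one only gets $[k(x,y):k(S,T)]\in\{3,6\}$, and one must still check $\tau(T)\ne T$ (equivalently $T^2\ne S^2-S+1$ in $k(x,y)$) to exclude $k(S,T)=k(e_1,e_2)$; this is easy from your formulas but needs to be said.
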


\begin{proof}
\begin{Case}{1} $\fn{char}k\ne 2,3$. \end{Case}

Define $z:= 1/xy$. The $k$-automorphism $\rho^2$ acts on
$k(x,y)=k(x,y,z)$ by $\rho^2:x\mapsto y\mapsto z\mapsto x$ and the
fixed field under the action of $\rho^2$ is given by
\begin{equation}
k(x,y,z)^{\langle\rho^2\rangle} = k(A,B,C,D)=k(A,B,D) \label{eq3}
\end{equation}
where
\begin{gather*}
A=x+y+z=\frac{x^2y+xy^2+1}{xy}, \quad B=xy+yz+zx=\frac{x^2y^2+x+y}{xy}, \\
C=xyz=1, \quad
D=(x-y)(y-z)(x-z)=\frac{(x-y)(xy^2-1)(x^2y-1)}{x^2y^2}.
\end{gather*}
Define $A_0=A/3$, $B_0=B/3$, $D_0=D/9$. Then the generators $A_0$,
$B_0$ and $D_0$ of $k(x,y,z)^{\langle
\rho^2\rangle}=k(A_0,B_0,D_0)$ satisfy the relation
\begin{equation}
3D_0^2=-4A_0^3+3A_0^2B_0^2+6A_0B_0-4B_0^3-1. \label{eq4}
\end{equation}
Find the singularities of \eqref{eq4}. We get $A_0-1=B_0-1=D_0=0$.
Put $A_1=A_0-1$, $B_1=B_0-1$. Then the equation \eqref{eq4}
becomes
\[
3D_0^2=-4A_1^3+3A_1^2B_1^2+6A_1^2B_1-9A_1^2+6A_1B_1^2+18A_1B_1-4B_1^3-9B_1^2
\]
with a singular point at the origin $P=(D_0=A_1=B_1=0)$.
Blowing up at the point $P$ by defining $A_2=A_1/B_1$, $D_1=D_0/B_1$, we have
\begin{equation}
3D_1^2=-4A_2^3B_1+3A_2^2B_1^2+6A_2^2B_1-9A_2^2+6A_2B_1+18A_2-4B_1-9. \label{eq5}
\end{equation}
This equation \eqref{eq5} also has singularities $A_2-B_1-2=B_1^2+3B_1+3=D_1=0$.
Blowing up by defining $A_3=\frac{A_2-B_1-2}{B_1^2+3B_1+3}$, $D_2=\frac{D_1}{B_1^2+3B_1+3}$, we get
\begin{equation}
3D_2^2=-4A_3^3B_1^3-12A_3^3B_1^2-12A_3^3B_1-9A_3^2B_1^2-18A_3^2B_1-9A_3^2-6A_3B_1-6A_3-1. \label{eq6}
\end{equation}
Define $B_2=B_1A_3$. Then we get $k(A_3,B_1,D_2)=k(A_3,B_2,D_2)$
with the relation
\begin{equation}
3D_2^2=-12A_3^2B_2-9A_3^2-12A_3B_2^2-18A_3B_2-6A_3-4B_2^3-9B_2^2-6B_2-1. \label{eq7}
\end{equation}
The equation \eqref{eq7} still has a singular point $P=(A_3=B_2+1=D_2=0)$.
Blowing up again by defining $A_4=\frac{A_3}{B_2+1}$, $B_3=B_2+1$, $D_3=\frac{D_2}{B_2+1}$,
we have $k(A_3,B_2,D_2)=k(A_4,B_3,D_3)$ and
\begin{equation}
3D_3^2=-12A_4^2B_3+3A_4^2-12A_4B_3+6A_4-4B_3+3. \label{eq8}
\end{equation}
Hence we obtain $k(x,y,z)^{\langle\rho^2\rangle}=k(A_4,B_3,D_3)=k(A_4,D_3)$ because the equation \eqref{eq8} is linear in $B_3$.

\bigskip
The action of $D_6=\langle\rho,\tau\rangle$ on $k(A_4,D_3)$ is
given by
\begin{align*}
\rho &: A_4 \mapsto \tfrac{A_4^2+2A_4+3D_3^2+1}{3A_4^2+2A_4-3D_3^2-1}, ~ D_3\mapsto -\tfrac{4A_4D_3}{3A_4^2+2A_4-3D_3^2-1}, \\
\tau &: A_4\mapsto A_4, ~ D_3\mapsto -D_3.
\end{align*}
Define $A_5=A_4+\frac{1}{3}$. Then the actions of $\rho$ and
$\tau$ on $k(A_5,D_3)$ are
\begin{align*}
\rho &: A_5\mapsto \tfrac{2(3A_5^2+2A_5+3D_3^2)}{9(A_5^2-D_3^2)-4}, ~ D_3\mapsto -\tfrac{4(3A_5-1)D_3}{9(A_5^2-D_3^2)-4}, \\
\tau &: A_5\mapsto A_5,~ D_3\mapsto -D_3.
\end{align*}
Put $A_6=3(A_5+D_3)$, $D_4=3(A_5-D_3)$. Then we have $k(A_5,D_3)=k(A_6,D_4)$ and
\[
\rho: A_6\mapsto \tfrac{2(2A_6+D_4^2)}{A_6D_4-4},~ D_4\mapsto\tfrac{2(A_6^2+2D_4)}{A_6D_4-4},\quad
\tau: A_6 \mapsto D_4,~ D_4\mapsto A_6.
\]
We put
\begin{equation}
S=\frac{A_6D_4-4}{2(A_6+D_4-2)}, \quad T=-\frac{D_4^2-2D_4+4}{2(A_6+D_4-2)}. \label{eq9}
\end{equation}
Then we get $k(A_6,D_4)=k(S,T)$ and
\[
\rho: S\mapsto \tfrac{1}{S},~ \tau \mapsto\tfrac{S+\frac{1}{S}-1}{T}, \quad \tau: S\mapsto S,~ T\mapsto \tfrac{S(S+\frac{1}{S}-1)}{T}.
\]
With the aid of computers, it is not difficult to express $S$ and
$T$ in terms of $x$ and $y$ as in the statement of this lemma.

\bigskip
\begin{Case}{2} $\fn{char}k=2$. \end{Case}

The transcendental basis $S$, $T$ obtained in \eqref{eq9} of Case
1 is also valid in case $\fn{char}k=2$. In fact, the formulae of
$S$ and $T$ in \eqref{eq2}, when $\fn{char}k=2$, become
\[
S=\frac{x^2y+xy^2+xy+1}{x^2y^2+xy+x+y},\quad T=\frac{(xy+y+1)(x^2y^2+x^2y+x^2+xy+x+1)}{(xy+x+1)(x^2y^2+xy+x+y)}
\]

Both of them are fixed by $\rho^2$. Moreover, they satisfy the
following relations
\begin{gather*}
y^3+\frac{S^2T+S^2+S+T^2+1}{S^3+ST^2+ST+T^2+1}y^2+\frac{S^3+S^2+ST^2+S+T}{S^3+ST^2+ST+T^2+1}y+1=0, \\
x=\frac{(T+1)y+S+T}{(S+T+1)y^2+(S+1)y+S+T+1}.
\end{gather*}
From $k(S,T)\subset k(x,y)^{\langle\rho^2\rangle} \subset k(x,y)$
and $[k(x,y):k(S,T)]\le 3$, we find that
$k(S,T)=k(x,y)^{\langle\rho^2\rangle}$.

\bigskip
\begin{Case}{3} $\fn{char}k=3$. \end{Case}

The basis $S$, $T$ in \eqref{eq9} is well defined also for the
case of $\fn{char}k=3$, but they collapse because $S+2T+1=0$. We
will find another transcendental basis $S$, $T$.

By \eqref{eq3}, we obtain $k(x,y)^{\langle\rho^2\rangle}=k(A,B,D)$ and $D^2=-A^3+A^2B^2-B^3$.
Put $A_2=\frac{A-B}{A+B}$, $D_2=\frac{D}{AB}$.
Then we get $k(A,B,D)=k(A_2,B,D_2)$ and
\[
B(A_2+1)(A_2^2D_2^2-A_2^2-D_2^2+1)+1=0.
\]
Hence $k(x,y)^{\langle\rho^2\rangle}=k(A_2,D_2)$. The actions of
$\rho$ and $\tau$ on $k(A_2,D_2)$ are given by
\[
\rho: A_2\mapsto -A_2,~ D_2\mapsto -D_2, \quad \tau: A_2\mapsto A_2,~ D_2\mapsto -D_2.
\]
Define $S=\frac{1+A_2}{1-A_2}$, $T=\frac{1+D_2}{1-D_2}$. Then we
get $k(A_2,D_2)=k(S,T)$ and
\[
\rho: S\mapsto \tfrac{1}{S},~ T\mapsto \tfrac{1}{T}, \quad \tau: S\mapsto S,~ T\mapsto \tfrac{1}{T}.
\]
We may also express $S$ and $T$ in terms of $x$ and $y$ as in the
statement of this lemma. Done.
\end{proof}

\section{Proof of Theorem \ref{t1.8}}

Throughout this section, we adopt the following convention : If
$K/k$ is a quadratic separable extension, we will write
$K=k(\alpha)$ with $\ol{\alpha}=-\alpha$ (resp.\ $\alpha+1$) if
$\fn{char}k\ne 2$ (resp.\ $\fn{char}k=2$) where $\ol{\alpha}$ is
the image of $\alpha$ under the unique non-trivial
$k$-automorphism of $K$. In order to shorten the wording, we
simply say that $K/k$ is a quadratic separable extension and
$K=k(\alpha)$ with $\alpha$ suitably chosen.

\begin{lemma} \label{l4.1}
Let $k$ be a field, $f(x)\in k[x]$, and $K/k$ a separable
quadratic field extension with $K=k(\alpha)$ where $\alpha^2=a \in
k$ \rm{(}if $\fn{char}k\ne 2$\rm{)} and $\alpha^2+\alpha=a \in k$
\rm{(}if $\fn{char}k=2$\rm{)}. Write $Gal(K/k)= \langle \sigma
\rangle$. Extend the action of $\sigma$ to $K(x,y)$ by
\[
\sigma: \alpha \mapsto \ol{\alpha},~ x \mapsto x,~ y\mapsto
\tfrac{f(x)}{y}.
\]
Denote by $\fn{Br}(K/k)$ the subgroup consisting of those elements
in the Brauer group $\fn{Br}(k)$, which are split over $K$; denote
by $(a,b)_k$ the norm residue $2$-symbol over $k$, and by
$[a,b)_k$ the norm residue $2$-symbol over $k$ where the first
variable is additive and the second variable is multiplicative.
\begin{enumerate}
\item[{\rm (1)}] When $f(x)=b$, $K(x,y)^{\langle\sigma\rangle}$ is
rational over $k$ if and only if {\rm (i)} $(a,b)_k=0$ when
$\fn{char}k\ne 2$, or {\rm (ii)} $[a,b)_k=0$ when $\fn{char}k=2$.
\item[{\rm (2)}] When $\deg f(x)=1$,
$K(x,y)^{\langle\sigma\rangle}$ is always rational over $k$.
\item[{\rm (3)}] When $\fn{char}k\ne 2$ and $f(x)=b(x^2-c)$ for
some $b,c\in k\backslash \{0\}$, then
$K(x,y)^{\langle\sigma\rangle}$ is rational over $k$ if and only
if $(a,b)_k\in \fn{Br}(k(\sqrt{ac})/k)$. \item[{\rm (4)}] When
$\fn{char}k=2$ and $f(x)=b(x^2+x+c)$ for some $b,c\in k$ with
$b\ne 0$, then $K(x,y)^{\langle\sigma\rangle}$ is rational over
$k$ if and only if $[a,b)_k\in \fn{Br}(k(\beta)/k)$ where
$\beta^2+\beta=a+c$. \item[{\rm (5)}] When $\fn{char}k=2$ and
$f(x)=b(x^2+c)$ for some $b,c\in k\backslash \{0\}$, then
$K(x,y)^{\langle\sigma\rangle}$ is rational over $k$ if and only
if $[a,b)_k\in \fn{Br}(k(\sqrt{c})/k)$.
\end{enumerate}
Moreover, if $K(x,y)^{\langle\sigma\rangle}$ is not $k$-rational,
then $k$ is an infinite field, the Brauer group $\fn{Br}(k)$ is
non-trivial, and $K(x,y)^{\langle\sigma\rangle}$ is not
$k$-unirational.
\end{lemma}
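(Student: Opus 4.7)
The plan is to exploit the explicit structure of the action: since $\sigma$ fixes $x$, the setup mirrors Proposition~\ref{p1.7}(2) but with the base field $k(x)$ in place of $k$. However $k(x)$-rationality is strictly stronger than $k$-rationality, so that proposition cannot be transplanted directly. Instead I would extract explicit invariants and analyse the resulting affine quadric surface. First set $u = y + f(x)/y$ and, in characteristic $\ne 2$, $v = \alpha(y - f(x)/y)$ (in characteristic $2$ take $v = \alpha y + (\alpha+1)f(x)/y$); both are $\sigma$-fixed. A routine transcendence/degree comparison gives $K(x,y)^{\langle\sigma\rangle} = k(x,u,v)$, with a single quadratic relation $v^2 = au^2 - 4af(x)$ (char $\ne 2$), and an Artin--Schreier-type analogue in char $2$. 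Thus $K(x,y)^{\langle\sigma\rangle}$ is the function field of an affine hypersurface of degree $2$ in three variables over $k$.

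Cases (1) and (2) fall out directly from this relation. In case (1), with $f(x) = b \in k$, the relation does not involve $x$, so $K(x,y)^{\langle\sigma\rangle} = F(x)$ where $F$ is the function field of the conic $v^2 = au^2 - 4ab$ over $k$; adjoining the transcendental $x$ is harmless, and the claim reduces to Proposition~\ref{p1.7}(2) applied over $k$, giving $(a,b)_k = 0$ (resp.\ $[a,b)_k = 0$). In case (2), with $\deg f(x) = 1$, an affine change of $x$ reduces to $f(x) = bx$; the relation becomes linear in $x$, giving $x \in k(u,v)$ and $K(x,y)^{\langle\sigma\rangle} = k(u,v)$, which is $k$-rational.

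In cases (3)--(5), where $\deg f(x) = 2$, the projective closure is a smooth quadric surface $Q$ over $k$ (one verifies nondegeneracy of the $4$-variable form, e.g.\ $\langle 1, -a, 4ab, -4abc \rangle$ in case (3)). A classical fact is that such a $Q$ is $k$-rational iff $Q$ carries a $k$-rational point, and for a $4$-dimensional form $\phi$ with discriminant $d\not\equiv 1\pmod{k^{\times 2}}$ this is equivalent to $\phi$ becoming hyperbolic over $k(\sqrt{d})$, i.e.\ to a Clifford-type Brauer class $c(\phi) \in \fn{Br}(k)$ lying in $\fn{Br}(k(\sqrt{d})/k)$. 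In case (3) the discriminant of the form is $ac$ modulo squares, so the obstruction lives in $\fn{Br}(k(\sqrt{ac})/k)$. The main task is to match this obstruction, modulo $\fn{Br}(k(\sqrt{ac})/k)$, with the norm-residue symbol $(a,b)_k$ via a cup-product computation exploiting the biquadratic structure $K \otimes_k k(\sqrt{ac}) = k(\sqrt{a},\sqrt{c})$. Cases (4) and (5) are analogous, using the symbol $[a,b)_k$ together with the corresponding Artin--Schreier-type subgroup $\fn{Br}(k(\beta)/k)$ (resp.\ $\fn{Br}(k(\sqrt{c})/k)$).

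The final assertion follows from the fact that a smooth quadric surface over $k$ with no $k$-rational point is not $k$-unirational; the non-vanishing of the relevant symbol forces $\fn{Br}(k) \ne 0$, whence $k$ is infinite by Wedderburn's theorem, exactly as at the end of the proof of Proposition~\ref{p1.7}. The main obstacle is the middle step in cases (3)--(5): correctly matching the Clifford/Brauer class of the quadric with the stated norm-residue symbols uniformly across the three sub-cases, with particular care in characteristic $2$ where the additive Artin--Schreier formalism and the quadratic-form invariants must be handled separately.
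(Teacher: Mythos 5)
Your overall strategy---realizing $K(x,y)^{\langle\sigma\rangle}$ as the function field of the explicit degree-two hypersurface $v^2-au^2+4af(x)=0$ (resp.\ $v^2+uv+au^2+f(x)=0$ when $\fn{char}k=2$) and deciding rationality by the existence of a $k$-point---is exactly the one underlying the paper: the authors cite [\cite{HKO}, Theorem 6.7] for parts (1)--(5), and for the final assertion they make precisely this hypersurface identification and invoke [\cite{HKO}, Theorem 2.2] (``rational iff subrational''). So you are reconstructing the content of the cited reference rather than diverging from it. Cases (1), (2) and (3) do go through as you sketch, with two standard but non-optional inputs you should make explicit: in (1), that a conic with no $k$-point stays anisotropic over $k(x)$ (Cassels/substitution principle), so adjoining the transcendental really is harmless in both directions; in (3), the theorem that a four-dimensional form with discriminant $d\notin k^{\times 2}$ is isotropic over $k$ if and only if it is isotropic over $k(\sqrt{d})$, which is what converts ``has a $k$-point'' into the condition $(a,b)_k\in \fn{Br}(k(\sqrt{ac})/k)$.

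The genuine gap is case (5). There $\fn{char}k=2$ and $f(x)=b(x^2+c)$, so the homogenized form is $V^2+VU+aU^2+bX^2+bcW^2$: its polar bilinear form has rank $2$, the summand $b\langle 1,c\rangle$ is totally singular, and the projective closure acquires a singular point at $(0:0:\sqrt{c}:1)$ over $\bar{k}$. Your step ``the projective closure is a smooth quadric surface $Q$ over $k$ (one verifies nondegeneracy of the $4$-variable form)'' therefore fails, and with it every classical fact you invoke afterwards: ``$Q$ is $k$-rational iff it has a $k$-point,'' ``no point implies not unirational,'' and the description of the obstruction as a Clifford class dying over the discriminant extension. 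Moreover $k(\sqrt{c})/k$ is purely inseparable, so $\fn{Br}(k(\sqrt{c})/k)$ is not captured by the cup-product/quadratic-\'etale formalism you propose to transpose from case (3). The conclusions of (5) are true, but for such semisingular quadrics they require a separate argument---which is exactly what [\cite{HKO}, Theorem 6.7] supplies---and cannot be obtained from the uniform smooth-quadric argument as written. Case (4), by contrast, is genuinely smooth ($[1,a]\perp b[1,c]$ has nondegenerate polar form with Arf invariant $a+c$) and your outline works there.
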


\begin{proof}
Except for the last statement, the theorem was proved in
[\cite{HKO}, Theorem 6.7]; also see [\cite{HK2}, Theorem 2.4;
\cite{Ka2}, Theorem 4.2].

Now we will prove the last statement. The field $k$ is infinite
because some norm residue $2$-symbol is not zero and the proof to
proving $k$ is infinite in the proof of Proposition \ref{p1.7} is
valid at the present situation. It remains to show that
$K(x,y)^{\langle\sigma\rangle}$ is not $k$-unirational.

As an illustration, consider (4) where $\fn{char}k=2$ and
$f(x)=b(x^2+x+c)$. It is easy to show that
$K(x,y)^{\langle\sigma\rangle}$ is $k$-isomorphic to the function
field of the hypersurface $\{P(X,Y,U)= X^2-XY-aY^2-b(U^2-U-c)=0
\}$ in the $3$-dimensional affine space over $k$ where $X,Y,U$ are
the coordinates (see, for example, [\cite{HKO}, Theorem, page
402]). By [\cite{HKO}, Theorem 2.2], this function field is
$k$-rational if and only if it is $k$-unirational (note that
$k$-unirational is called subrational over $k$ in [\cite{HKO},
page 386]). This finishes the proof.

Similarly, the field $K(x,y)^{\langle\sigma\rangle}$ in (5)
corresponds to the hypersurface defined by $\{P(X,Y,U)=
X^2-XY-aY^2-b(U^2-c)=0 \}$; the field in (3) corresponds to the
hypersurface defined by $\{P(X,Y,U)= X^2-aY^2-b(U^2-c)=0 \}$; the
field in (1) corresponds to the hypersurface defined by
$\{P(X,Y,U)= X^2-aY^2-b=0 \}$ (resp. $\{P(X,Y,U)= X^2-XY-aY^2-b=0
\}$. Apply [\cite{HKO}, Theorem 2.2] to these fields. Hence the
result.
\end{proof}

\begin{example} \label{ex4.2}
Take $k=\bm{Q}$, $a=-1$, $f(x)=-(x^2+1)$. By Lemma \ref{l4.1}, we
find that $\bm{Q}(\sqrt{-1})(x,y)^{\langle\sigma\rangle}$ is not
$\bm{Q}$-unirational. By defining $2u=y+\tfrac{f(x)}{y}$ and
$2v=\sqrt{-1} \, (y-\tfrac{f(x)}{y})$, we obtain
$\bm{Q}(\sqrt{-1})(x,y)^{\langle\sigma\rangle}=\bm{Q}(x,u,v)$ with
the relation $u^2+v^2=-x^2-1$. Note that the field $\bm{Q}(x,u,v)$
has no $\bm{Q}$-rational place (i.e. $\bm{Q}$-rational point).

On the other hand, take $k=\bm{R}$, $a=-1$, $f(x)=x^3 - 3x$. Then
$\bm{R}(\sqrt{-1})(x,y)^{\langle\sigma\rangle}$ $\simeq
\bm{R}(x,u,v)$ with the relation $u^2+v^2=x^3 - 3x$. We claim that
$\bm{R}(x,u,v)$ is $\bm{R}$-unirational, but not
$\bm{R}$-rational. For the $\bm{R}$-unirationality, the map $\phi
: \bm{R}[U,V,X]/\langle U^2 + V^2 -X^3 +3X \rangle \to
\bm{R}(s,t)$ defined by $\phi(U)=\frac{s(3+s^2)(1-t^2)-2
\sqrt{2}t}{1+t^2}$, $\phi(V)=\frac{2st(3+s^2)+
\sqrt{2}(1-t^2)}{1+t^2}$, $\phi(X)=2+s^2$ gives an embedding of
$\bm{R}(x,u,v)$ into $\bm{R}(s,t)$ [\cite{Oj}, page 9]. The
irrationality of $\bm{R}(x,u,v)$ follows from Iskovskikh's
criterion (see, for example, [\cite{Ka2}, Theorem 4.3]); the
reader may consult [\cite{Oj}, pages 10-12; pages 81-83] for other
proofs. The example of a real algebraic surface which is
unirational, but not rational is constructed by B. Segre in 1951
[\cite{Oj}, page iv].

Nagata asks the following question [\cite{Na}, page 90] : Let $k$
be an algebraically closed field, $f_1,f_2,f_3$ be non-zero
polynomials in the polynomial ring $k[x_1, \ldots, x_n]$, $L$ be
the field $k(x_1, \ldots, x_n,u,v)$ satisfying the relation
$f_1u^2+f_2v^2=f_3$. Is it possible to find such a field $L$ which
is not $k$-unirational? So far as we know, this question is still
an open problem.
\end{example}

\bigskip
\begin{proof}[\indent Proof of Theorem \ref{t1.8}] ~

Let $G$ act on $K(x,y)$ by purely quasi-monomial $k$-automorphisms
and $N$, $H$ the subgroups of $G$ defined in the statement of
Theorem \ref{t1.8}.

Note that $K(x,y)^G=\{K(x,y)^N\}^{G/N}$ and $G/N$ acts on $K^N(x,y)$ by purely monomial $k$-automorphisms.
Without loss of generality, we may assume $N=\{1\}$.
Thus $\rho:G\to GL_2(\bm{Z})$ is injective where $\rho$ is the group homomorphism defined in Definition \ref{d1.5}.
In particular, $G$ is isomorphic to a finite subgroup of $GL_2(\bm{Z})$ and we may apply Theorem \ref{t3.1} to write down the action of $G$ on $x$ and $y$.

If $H=\{1\}$, i.e.\ $G$ acts faithfully on $K$, then $K(x,y)^G$ is
$k$-rational by Theorem \ref{t1.11}.

If $H=G$, i.e.\ $k=K$, then $G$ acts on $k(x,y)$ by purely
monomial $k$-automorphisms. Thus $k(x,y)^G$ is $k$-rational by
Theorem \ref{t1.13}.

It remains to consider the case $\{1\}\subsetneq H\subsetneq G$ and $H\triangleleft G$.

Since $\rho: G\to GL_2(\bm{Z})$ is injective, $\rho(H)$ is a
non-trivial proper normal subgroup of $\rho(G)$. From Theorem
\ref{t3.1}, it is easy to see that only 8 groups have a
non-trivial proper normal subgroup: $V_4^{(1)}$, $V_4^{(2)}$,
$C_4$, $S_3^{(1)}$, $S_3^{(2)}$, $C_6$, $D_4$ and $D_6$.

\bigskip
\begin{Case}{1} $G\simeq V_4^{(1)}=\langle \lambda,-I\rangle$. \end{Case}

The non-trivial proper normal subgroups $H$ of $G$ are $\langle -I \rangle$, $\langle \lambda \rangle$, and $\langle -\lambda \rangle$.

\medskip
Subcase 1.1. $(G,H)\simeq (V_4^{(1)},\langle -I\rangle)$. Thus
$\lambda$ acts faithfully on $K$ and $[K:k]=2$. Write
$K=k(\alpha)$ with $\lambda(\alpha)=\ol{\alpha}$ where $\alpha$ is
chosen suitably. The group $G=\langle -I,\lambda\rangle$ acts on
$K(x,y)$ by
\[
-I: \alpha\mapsto \alpha,~ x\mapsto \tfrac{1}{x},~ y\mapsto \tfrac{1}{y}, \quad \lambda:\alpha\mapsto \ol{\alpha},~ x\mapsto x,~ y\mapsto\tfrac{1}{y}.
\]
By Lemma \ref{l3.2} we have $K(x,y)^H=K(s,t)$ where $s$ and $t$
are given as in \eqref{eq1}, and $\lambda$ acts on $K(s,t)$ by
\[
\lambda: \alpha\mapsto \ol{\alpha},~ s\mapsto \tfrac{1}{s},~ t\mapsto \begin{cases}
\frac{1}{t} & \text{if }\fn{char}k\ne 2, \\ t & \text{if } \fn{char}k=2. \end{cases}
\]
Hence
\[
K(x,y)^G=K(s,t)^{\langle\lambda\rangle}=\begin{cases}
k(\alpha\frac{s+1}{s-1},\alpha\frac{t+1}{t-1}) & \text{if }\fn{char}k\ne 2, \\
k(\alpha+\frac{1}{s+1},t) & \text{if }\fn{char}k=2, \end{cases}
\]
is $k$-rational.

\medskip
Subcase 1.2. $(G,H)\simeq (V_4^{(1)},\langle\lambda\rangle)$.
Again $[K:k]=2$ and write $K=k(\alpha)$ with $\alpha$ suitably chosen.
The group $G$ acts on $K(x,y)$ by
\[
\lambda:\alpha\mapsto \alpha,~ x\mapsto x,~ y\mapsto\tfrac{1}{y},
\quad -I:\alpha\mapsto \ol{\alpha},~ x\mapsto\tfrac{1}{x},~
y\mapsto \tfrac{1}{y}.
\]

We have $K(x,y)^H=K(u,v)$ where $u=x$ and $v=y+\frac{1}{y}$, and
$-I$ acts on $K(u,v)$ by
\[
-I:\alpha\mapsto\ol{\alpha},~ u\mapsto\tfrac{1}{u},~ v\mapsto v.
\]
Hence
\[
K(x,y)^G=K(u,v)^{\langle -I\rangle}=\begin{cases}
k(\alpha\frac{u+1}{u-1},v) & \text{if }\fn{char}k\ne 2, \\
k(\alpha+\frac{1}{u+1},v) & \text{if }\fn{char}k=2 \end{cases}
\]
is $k$-rational.

\medskip
Subcase 1.3. $(G,H)\simeq (V_4^{(1)},\langle -\lambda \rangle)$.

The group $G$ acts on $K(x,y)$ by
\[
-\lambda : \alpha\mapsto\alpha,~ x\mapsto \frac{1}{x},~ y\mapsto
y, \quad -I : \alpha\mapsto \bar{\alpha},~ x\mapsto \tfrac{1}{x},~
y\mapsto \tfrac{1}{y}
\]
where $[K:k]=2$ and $K=k(\alpha)$ with $\alpha$ chosen suitably.
This is essentially the same situation as in Subcase 1.2.

\bigskip
\begin{Case}{2} $G\simeq V_4^{(2)}=\langle \tau,-I \rangle$. \end{Case}

The non-trivial proper normal subgroups $H$ of $G$ are $\langle -I\rangle$, $\langle \tau \rangle$ and $\langle -\tau\rangle$.

\medskip
Subcase 2.1. $(G,H)\simeq (V_4^{(2)}, \langle -I\rangle)$. We have
$[K:k]=2$ and $K=k(\alpha)$ with $\alpha$ chosen suitably.

The proof is similar by using Lemma \ref{l3.1}. The details are
omitted.

\medskip
Subcase 2.2. $(G,H)\simeq (V_4^{(2)},\langle \tau \rangle)$. Write
$K=k(\alpha)$ with $\alpha$ suitably chosen. The group $G$ acts on
$K(x,y)$ by
\[
\tau:\alpha\mapsto \alpha,~ x\mapsto y,~ y\mapsto x, \quad
-I:\alpha\mapsto \ol{\alpha},~ x\mapsto\tfrac{1}{x},~ y\mapsto \tfrac{1}{y}.
\]
We have $K(x,y)^{\langle\tau\rangle}=K(u,v)$ where $u:=x+y$ and $v:=(x+y)/xy$, and $-I$ acts on $K(u,v)$ by
\[
-I:\alpha\mapsto\ol{\alpha},~ u\mapsto v,~ v\mapsto u.
\]
Hence
\[
K(x,y)^G=K(s,t)^{\langle -I\rangle}=\begin{cases}
k(u+v,\alpha(u-v)) & \text{if }\fn{char}k\ne 2, \\
k(u+v,\alpha+\frac{u}{u+v}) & \text{if }\fn{char}k=2 \end{cases}
\]
is $k$-rational.

\medskip
Subcase 2.3. $(G,H)=(V_4^{(2)},\langle -\tau \rangle)$.
The group $G$ acts on $K(x,y)$ by
\[
-\tau: \alpha\mapsto\alpha,~ x\mapsto\tfrac{1}{y},~ y\mapsto \tfrac{1}{x}, \quad
-I:\alpha\mapsto\ol{\alpha},~x\mapsto\tfrac{1}{x},~ y\mapsto \tfrac{1}{y}
\]
where $[K:k]=2$ and $K=k(\alpha)$ with $\alpha$ chosen suitably.
Define $y'=1/y$. We have
\[
-\tau:\alpha\mapsto \alpha,~ x\mapsto y',~ y'\mapsto x, \quad
-I:\alpha\mapsto\ol{\alpha},~x\mapsto \tfrac{1}{x},~ y'\mapsto \tfrac{1}{y'}
\]

This is the same action as in Subcase 2.2. Done.

\bigskip
\begin{Case}{3} $G\simeq C_4=\langle\sigma\rangle$. \end{Case}

The non-trivial proper normal subgroup $N$ of $G$ is $\langle -I
\rangle$. The action is given by
\[
-I=\sigma^2:\alpha\mapsto \alpha,~ x\mapsto\tfrac{1}{x},~y\mapsto \tfrac{1}{y},\quad
\sigma:\alpha\mapsto \ol{\alpha},~ x\mapsto y,~ y\mapsto\tfrac{1}{x}
\]
where $[K:k]=2$ and $K=k(\alpha)$ with $\alpha$ chosen suitably.
By Lemma \ref{l3.2} we have $K(x,y)^N=K(s,t)$ where $s$ and $t$
are given as in \eqref{eq1}, and $\sigma$ acts on $K(s,t)$ by
\[
\sigma:\alpha\mapsto \ol{\alpha},~ s\mapsto\tfrac{1}{s},~ t\mapsto \begin{cases}
-\frac{1}{t} & \text{if } \fn{char}k\ne 2, \\
\frac{1}{t} & \text{if } \fn{char}k=2. \end{cases}
\]

If $\fn{char}k=2$ then $K(x,y)^G=K(s,t)^{\langle \sigma\rangle}$
is $k$-rational by Theorem \ref{t1.11}.

When $\fn{char}k\ne 2$, apply Lemma \ref{l4.1}. By defining
$a=\alpha^2\in k$ and $x=\alpha\frac{s+1}{s-1}$, we find that
$K(x,y)^G$ is $k$-rational if and only if $(a,-1)_k=0$. If
$K(x,y)^G$ is not $k$-rational, it is not $k$-unirational by the
last assertion of Lemma \ref{l4.1}. This finishes the proof of the
first exceptional case in Theorem \ref{t1.8}.

\bigskip
\begin{Case}{4} $G\simeq S_3^{(1)}=\langle \rho^2,\tau \rangle$. \end{Case}

The non-trivial proper normal subgroup $N$ of $G$ is $\langle \rho^2\rangle$.
We consider the following actions:
\[
\rho^2:\alpha\mapsto\alpha,~ x\mapsto y,~ y\mapsto \tfrac{1}{xy}, \quad
\tau:\alpha \mapsto \ol{\alpha},~ x\mapsto y,~ y\mapsto x
\]
where $[K:k]=2$ and $K=k(\alpha)$ with $\alpha$ chosen suitably.
We have $K(x,y)^{\langle \rho^2 \rangle}=K(S,T)$ where $S$ and $T$
are defined in Lemma \ref{l3.3}, and the action of $\tau$ on
$K(S,T)$ is given by
\[
\tau:\alpha\mapsto\ol{\alpha},~ S\mapsto S,~ T\mapsto \begin{cases}
\frac{S(S+\frac{1}{S}-1)}{T} & \text{if } \fn{char}k\ne 3, \\
\frac{1}{T} & \text{if } \fn{char}k=3. \end{cases}
\]
Hence $K(x,y)^G=K(S,T)^{\langle \tau\rangle}$ is $k$-rational by
Lemma \ref{l4.1}.

\bigskip
\begin{Case}{5} $G\simeq S_3^{(2)}=\langle \rho^2,-\tau\rangle$. \end{Case}

The non-trivial proper normal subgroup $N$ of $G$ is $\langle \rho^2 \rangle$.
We consider the following actions:
\[
\rho^2:\alpha\mapsto\alpha,~ x\mapsto y,~ y\mapsto \tfrac{1}{xy}, \quad
-\tau: \alpha\mapsto \ol{\alpha},~ x\mapsto \tfrac{1}{y},~ y\mapsto\tfrac{1}{x}
\]
where $K=k(\alpha)$ and $[K:k]=2$ with $\alpha$ suitably chosen.
We have $K(x,y)^{\langle\rho^2\rangle}=K(S,T)$ where $S$ and $T$
are defined in Lemma \ref{l3.3}, and the action of $-\tau$ on
$K(S,T)$ is given by
\[
-\tau: \alpha\mapsto\ol{\alpha},~S\mapsto \tfrac{1}{S},~ T\mapsto \begin{cases}
\frac{T}{S} & \text{if } \fn{char}k\ne 3, \\
T & \text{if }\fn{char}k=3. \end{cases}
\]

$K(x,y)^G=K(S,T)^{\langle -\tau \rangle}$ is $k$-rational by
Theorem \ref{t1.11}.

\bigskip
\begin{Case}{6} $G\simeq C_6=\langle\rho\rangle$. \end{Case}

The non-trivial proper normal subgroups $N$ of $G$ are $\langle -I
\rangle$ and $\langle \rho^2 \rangle$.

\medskip
Subcase 6.1. $(G,H)\simeq (C_6,\langle -I\rangle)$.
The group $G$ acts on $K(x,y)$ by
\[
-I:\alpha\mapsto \alpha,~ x\mapsto\tfrac{1}{x},~ y\mapsto \tfrac{1}{y}, \quad
\rho:\alpha\mapsto \ol{\alpha},~ x\mapsto xy,~ y\mapsto \tfrac{1}{x}
\]
where $K=k(\alpha)$ and $[K:k]=3$ with $\alpha$ as before. By
Lemma \ref{l3.2}, we get $K(x,y)^{\langle -I\rangle}=K(s,t)$ where
$s$ and $t$ are given as in \eqref{eq1}, and $\rho$ acts on
$K(s,t)$ by
\[
\rho: s\mapsto\left\{\begin{array}{@{}l@{}} \frac{s-t}{s+t-2st}, \\ \frac{t}{s(t+1)+1}, \end{array}\right. ~
t\mapsto \begin{cases} \frac{-s+t}{s+t+2st} & \text{if }\fn{char}k\ne 2, \\
\frac{1}{s(t+1)} & \text{if }\fn{char}k=2. \end{cases}
\]
Define
\begin{alignat*}{3}
A &= \frac{-2t}{s-t+2st}, &\q B &= \frac{s+t-2st}{2t}, &\q & \text{if }\fn{char}k\ne 2; \\
A &= s(t+1), & B &= \frac{t}{s(t+1)}, && \text{if }\fn{char}k=2.
\end{alignat*}
Then $K(x,y)^{\langle -I\rangle}=K(s,t)=K(A,B)$ and $\rho$ acts on $K(A,B)$ by
\[
\rho:\alpha\mapsto\ol{\alpha},~ A\mapsto B,~ B\mapsto \tfrac{1}{AB}.
\]

$K(x,y)^G=K(A,B)^{\langle\rho\rangle}$ is $k$-rational by Theorem
\ref{t1.11}.

\medskip
Subcase 6.2. $(G,H)\simeq (C_6,\langle \rho^2\rangle)$.
The group $G$ acts on $K(x,y)$ by
\[
\rho^2:\alpha\mapsto \alpha,~ x\mapsto y,~ y\mapsto \tfrac{1}{xy}, \quad
\rho:\alpha\mapsto \ol{\alpha},~ x\mapsto xy,~ y\mapsto \tfrac{1}{x}
\]
where $K=k(\alpha)$ and $[K:k]=2$ with $\alpha$ as before. We have
$K(x,y)^{\langle \rho^2 \rangle}=K(S,T)$ where $S$ and $T$ are
defined in Lemma \ref{l3.3}, and $\rho$ acts on $K(S,T)$ by
\[
\rho:\alpha\mapsto\ol{\alpha},~ S\mapsto \tfrac{1}{S},~ T\mapsto \begin{cases}
(S+\frac{1}{S}-1)/T & \text{if } \fn{char} k\ne 3, \\
\frac{1}{T} & \text{if }\fn{char}k=3. \end{cases}
\]

When $\fn{char}k=3$,
$K(x,y)^G=k(\alpha\frac{S+1}{S-1},\alpha\frac{T+1}{T-1})$ is
rational over $k$.

When $\fn{char}k=2$, define
\[
U:=\frac{S}{S+1}+\alpha,\quad V:=\frac{T}{S+1}.
\]

Then $K(S,T)=K(U,V)$ and $\rho$ acts on $K(U,V)$ by
\[
\rho: \alpha\mapsto \alpha+1,~ U\mapsto U,~ V\mapsto \tfrac{U^2+U+a+1}{V}
\]
where $a=\alpha(\alpha+1)$. By Lemma \ref{l4.1} (4),
$K(x,y)^G=K(U,V)^{\langle\rho\rangle}$ is $k$-rational because
$[a,1)_k=0$.

When $\fn{char}k\ne 2,3$, define
\[
U:=\frac{S+1}{S-1}, \quad V:=(U-1)T.
\]
Then $K(S,T)=K(U,V)$ and $\rho$ acts on $K(U,V)$ by
\[
\rho: \alpha\mapsto -\alpha,~ U\mapsto -U,~ V\mapsto -(U^2+3)/V.
\]

Define $W:=U/\alpha$. We find that
\[
\rho:\alpha\mapsto -\alpha,~ W\mapsto W, ~ T\mapsto -(aW^2+3)/T.
\]

Apply Lemma \ref{l4.1} (3). We obtain $K(x,y)^G=K(W,T)^{\langle
\rho\rangle}$ is $k$-rational because $(a,-a)_k=0$.

\bigskip
\begin{Case}{7} $G\simeq D_4=\langle \sigma,\tau\rangle$. \end{Case}

The non-trivial proper normal subgroups $N$ of $G$ are $\langle -I \rangle$, $\langle -I,\lambda\rangle$, $\langle -I,\tau\rangle$ and $\langle\sigma\rangle$ where $\lambda=\tau\sigma$.

\medskip
Subcase 7.1. $(G,H)=(D_4,\langle -I\rangle)$.

Since $G/H=\langle\sigma,\tau\rangle \simeq V_4$ acts faithfully on $K$,
we may write $K=k(\alpha,\beta)$ with $\alpha$, $\beta$ suitably chosen and $[K:k]=4$.

The group $G$ acts on $K(x,y)$ by
\begin{alignat*}{2}
-I &: \alpha\mapsto \alpha,~\beta\mapsto \beta, ~ x\mapsto\tfrac{1}{x},~ y\mapsto \tfrac{1}{y}, &\q \sigma &: \alpha\mapsto \ol{\alpha},~ \beta\mapsto \beta,~ x\mapsto y,~ y\mapsto \tfrac{1}{x}, \\
\tau &: \alpha\mapsto \alpha,~ \beta\mapsto \ol{\beta},~ x\mapsto y,~ y\mapsto x.
\end{alignat*}
By Lemma \ref{l3.2}, we have $K(x,y)^{\langle -I \rangle}=K(s,t)$ where $s$ and $t$ are given as in \eqref{eq1},
and the actions of $\sigma$ and $\tau$ on $K(s,t)$ are given by
\begin{align*}
\sigma &: \alpha\mapsto \ol{\alpha},~ \beta\mapsto \beta,~ s\mapsto \tfrac{1}{s},~ t\mapsto
\begin{cases} -\frac{1}{t} & \text{if }\fn{char}k\ne 2, \\ \frac{1}{t} & \text{if }\fn{char}k=2, \end{cases} \\
\tau &: \alpha\mapsto \alpha, ~ \beta \mapsto \ol{\beta},~ s\mapsto s,~ t\mapsto
\begin{cases} -t & \text{if }\fn{char}k\ne 2, \\ \frac{1}{t} & \text{if } \fn{char}k=2. \end{cases}
\end{align*}

When $\fn{char}k=2$, $K(x,y)^G=K(s,t)^{\langle
\sigma,\tau\rangle}$ is $k$-rational by Theorem \ref{t1.11}.

When $\fn{char}k\ne 2$, we put
\[
S:=\frac{\alpha(s+1)}{s-1}, \quad T:=\beta t,
\]
then $K(s,t)^{\langle\tau\rangle}=k(\alpha)(S,T)$ and $\sigma$ acts on $k(\alpha)(S,T)$ by
\[
\sigma:\alpha\mapsto \ol{\alpha},~ \beta\mapsto \beta, ~ S\mapsto S,~ T\mapsto\tfrac{-b}{T}.
\]

By Lemma \ref{l4.1} (1),
$K(x,y)^G=k(\alpha)(S,T)^{\langle\sigma\rangle}$ is $k$-rational
if and only if $(a,-b)_k=0$. If $K(x,y)^G$ is not $k$-rational, it
is not $k$-unirational by the last assertion of Lemma \ref{l4.1}.
This completes the proof of the second exceptional case in Theorem
\ref{t1.8}.

\medskip
Subcase 7.2. $(G,H)\simeq (D_4,\langle -I,\lambda\rangle)$,
$N=\langle -I,\lambda\rangle$.

The group $G$ acts on $K(x,y)$ by
\begin{alignat*}{2}
-I &: \alpha\mapsto \alpha,~ x\mapsto \tfrac{1}{x},~ y\mapsto \tfrac{1}{y}, &\q \lambda &: \alpha\mapsto \alpha,~ x\mapsto x,~ y\mapsto \tfrac{1}{y}, \\
\tau &: \alpha\mapsto \ol{\alpha},~ x\mapsto y,~ y\mapsto x
\end{alignat*}
where $K=k(\alpha)$ and $[K:k]=2$.
By Lemma \ref{l3.2}, $K(x,y)^{\langle -I\rangle}=K(s,t)$ where $s$ and $t$ are given as in \eqref{eq1},
and $\lambda$ and $\tau$ act on $K(s,t)$ by
\begin{align*}
\lambda &: \alpha\mapsto\alpha,~ s\mapsto \tfrac{1}{s},~ t\mapsto
\begin{cases} \frac{1}{t} & \text{if }\fn{char} k\ne 2, \\ t & \text{if }\fn{char}k=2, \end{cases} \\
\tau &: \alpha\mapsto \ol{\alpha},~ s\mapsto s,~ t\mapsto
\begin{cases} -t & \text{if } \fn{char}k\ne 2, \\ \frac{1}{t} & \text{if }\fn{char}k=2. \end{cases}
\end{align*}

When $\fn{char}k=2$, the action is the same as that in Subcase
1.3. Hence the result.

When $\fn{char}k\ne 2$, by Lemma \ref{l3.2} again, we have
$K(s,t)^{\langle \lambda \rangle}=K(u,v)$ where $u=(st+1)/(s+t)$
and $v=(st-1)/(s-t)$, and $\tau$ acts on $K(u,v)$ by
\[
\tau: \alpha\mapsto -\alpha,~ u\mapsto -v,~ v\mapsto -u.
\]

Thus $K(x,y)^G=K(u,v)^{\langle\tau\rangle}=k(\alpha(u+v),u-v)$ is
$k$-rational.

\medskip
Subcase 7.3. $(G,H)\simeq (D_4,\langle -I,\lambda \rangle)$.
The group $G$ acts on $K(x,y)$ by
\begin{alignat*}{2}
-I &: \alpha\mapsto \alpha,~ x\mapsto \tfrac{1}{x},~ y\mapsto \tfrac{1}{y}, &\q \tau &: \alpha\mapsto \alpha,~ x\mapsto y,~ y\mapsto x, \\
\sigma &: \alpha\mapsto \ol{\alpha},~ x\mapsto y,~ y\mapsto \tfrac{1}{x}
\end{alignat*}
where $K=k(\alpha)$ and $[K:k]=2$.
By Lemma \ref{l3.2}, $K(x,y)^{\langle -I \rangle}=K(s,t)$ where $s$ and $t$ are given as in \eqref{eq1},
and $\tau$ and $\sigma$ act on $K(s,t)$ by
\begin{align*}
\tau &: \alpha\mapsto\alpha,~ s\mapsto s,~ t\mapsto
\begin{cases}  -t & \text{if }\fn{char} k\ne 2, \\ \frac{1}{t} & \text{if }\fn{char}k=2, \end{cases} \\
\sigma &: \alpha\mapsto \ol{\alpha},~ s\mapsto \tfrac{1}{s},~ t\mapsto
\begin{cases} -\frac{1}{t} & \text{if } \fn{char}k\ne 2, \\ \frac{1}{t} & \text{if }\fn{char}k=2. \end{cases}
\end{align*}

When $\fn{char}k=2$, the action is the same as in Subcase 1.2.
Done.

When $\fn{char}k\ne 2$, $K(s,t)^{\langle \tau\rangle}=K(s,t')$
where $t'=t^2$. It follows that
$K(s,t)^{\langle\tau,\sigma\rangle}=k(s,t')^{\langle\sigma\rangle}$
is $k$-rational.

\medskip
Subcase 7.4. $(G,H)\simeq (D_4,\langle\sigma\rangle)$.
The group $G$ acts on $K(x,y)$ by
\begin{alignat*}{2}
-I &: \alpha\mapsto \alpha,~ x\mapsto \tfrac{1}{x},~ y\mapsto \tfrac{1}{y}, &\q \sigma &: \alpha\mapsto \alpha,~ x\mapsto y,~ y\mapsto \tfrac{1}{x}, \\
\tau &: \alpha\mapsto \ol{\alpha},~ x\mapsto y,~ y\mapsto x
\end{alignat*}
where $K=k(\alpha)$ and $[K:k]=2$.
By Lemma \ref{l3.2}, $K(x,y)^{\langle -I\rangle}=K(s,t)$ where $s$ and $t$ are given as in \eqref{eq1},
and $\tau$ and $\sigma$ act on $K(s,t)$ by
\begin{align*}
\sigma &: \alpha\mapsto\alpha,~ s\mapsto \tfrac{1}{s},~ t\mapsto
\begin{cases}  -\frac{1}{t} & \text{if }\fn{char} k\ne 2, \\ \frac{1}{t} & \text{if }\fn{char}k=2, \end{cases} \\
\tau &: \alpha\mapsto \ol{\alpha},~ s\mapsto s,~ t\mapsto
\begin{cases} -t & \text{if } \fn{char}k\ne 2, \\ \frac{1}{t} & \text{if }\fn{char}k=2. \end{cases}
\end{align*}

When $\fn{char}k=2$, this is the same action as in Subcase 1.1.
Hence $k(x,y)^G$ is $k$-rational.

When $\fn{char}k\ne 2$, by Lemma \ref{l3.1},
$K(s,t)^{\langle\sigma\rangle}=K(u,v)$ where
\[
u=\frac{s-\frac{1}{s}}{st+\frac{1}{st}}, \quad v=\frac{t+\frac{1}{t}}{st+\frac{1}{st}}
\]
and $\tau$ acts on $K(u,v)$ by
\[
\tau:\alpha\mapsto \ol{\alpha},~ u\mapsto -u,~ v\mapsto v.
\]

Thus $K(x,y)^G=K(u,v)^{\langle \tau\rangle}=k(\alpha u,v)$ is
$k$-rational.

\bigskip
\begin{Case}{8} $G\simeq D_6=\langle \rho,\tau\rangle$. \end{Case}

The non-trivial proper normal subgroups $N$ of $G$ are $\langle -I\rangle$, $\langle\rho^2\rangle$, $\langle\rho\rangle$, $\langle\rho^2,\tau\rangle$ and $\langle\rho^2,-\tau\rangle$.

\medskip
Subcase 8.1. $(G,H)\simeq (D_6,\langle -I\rangle)$.

By the same calculation as in Subcase 6.1, $\rho$ and $\tau$ act
on $K(x,y)^{\langle -I\rangle}=K(A,B)$ by
\[
\rho:\alpha\mapsto\ol{\alpha},~ A\mapsto B,~ B \mapsto \tfrac{1}{AB}, \quad
\tau:\beta\mapsto \ol{\beta},~ A\mapsto \tfrac{1}{B},~ B\mapsto \tfrac{1}{A}
\]
where $K=k(\alpha,\beta)$, $[K:k]=6$ and $\langle\rho,\tau\rangle$
acts on $K$ faithfully. Hence $K(x,y)^G=K(A,B)^{\langle
\rho,\tau\rangle}$ is $k$-rational by Theorem \ref{t1.11}.

\medskip
Subcase 8.2. $(G,H)\simeq (D_6,\langle \rho^2\rangle)$.

The group $G$ acts on $K(x,y)$ by
\begin{alignat*}{2}
\rho^2 &: \alpha\mapsto \alpha,~\beta\mapsto\beta,~ x\mapsto y,~ y\mapsto \tfrac{1}{xy}, &\q -\tau &: \alpha\mapsto \ol{\alpha},~\beta\mapsto\beta,~ x\mapsto \tfrac{1}{y},~ y\mapsto \tfrac{1}{x}, \\
\tau &: \alpha\mapsto \alpha,~\beta\mapsto \ol{\beta},~ x\mapsto y,~ y\mapsto x
\end{alignat*}
where $K=k(\alpha,\beta)$ and $[K:k]=4$.
We have $K(x,y)^{\langle \rho^2\rangle}=K(S,T)$ where $S$ and $T$ are defined in Lemma \ref{l3.3} and $-\tau$ and $\tau$ act on $K(S,T)$ by
\begin{align*}
-\tau &: \alpha\mapsto\ol{\alpha},~\beta\mapsto\beta,~ S\mapsto \tfrac{1}{S},~ T\mapsto
\begin{cases}  \frac{T}{S} & \text{if }\fn{char} k\ne 3, \\ T & \text{if }\fn{char}k=3, \end{cases} \\
\tau &: \alpha\mapsto \alpha,~\beta\mapsto \ol{\beta},~ S\mapsto S,~ T\mapsto
\begin{cases} \frac{S(S+\frac{1}{S}-1)}{T} & \text{if } \fn{char}k\ne 3, \\ \frac{1}{T} & \text{if }\fn{char}k=3. \end{cases}
\end{align*}

When $\fn{char}k=3$, $K(x,y)^G=K(S,T)^{\langle -\tau,\tau\rangle}$
is $k$-rational by Theorem \ref{t1.11}.

When $\fn{char}k=2$, $K(x,y)^{\langle -\tau\rangle}=K(U,V)$ where
\[
U=\alpha+\frac{S}{S+1}, \quad V=\frac{T}{S+1},
\]
and $\tau$ acts on $K(U,V)$ by
\[
\tau: \beta\mapsto \ol{\beta},~ U\mapsto U,~ V\mapsto \tfrac{U^2+U+a+1}{V}
\]
where $a=\alpha(\alpha+1)$. By Lemma \ref{l4.1} (4),
$K(x,y)^G=K(U,V)^{\langle\tau\rangle}$ is $k$-rational.

When $\fn{char}k\ne 2,3$, define
\[
U:=\frac{S+1}{S-1}, \quad V:=(U-1)T.
\]

Then $-\tau$ acts on $K(S,T)=K(U,V)$ by
\[
-\tau: \alpha\mapsto -\alpha,~\beta\mapsto\beta,~ U\mapsto -U,~ V\mapsto -V.
\]

Define $P=V/\alpha$ and $Q=U/\alpha$. It follows that
$K(U,V)^{\langle -\tau\rangle}=k(\beta)(P,Q)$ and $\tau$ acts on
$k(\beta)(P,Q)$ by
\[
\tau: \beta\mapsto -\beta,~ P\mapsto \tfrac{Q^2+3/a}{P},~ Q\mapsto Q.
\]

Thus $K(x,y)^G=k(\beta)(P,Q)^{\langle \tau \rangle}$ is
$k$-rational by Lemma \ref{l4.1} (3).

\medskip
Subcase 8.3. $(G,H)\simeq (D_6,\langle\rho \rangle)$.

The group $G$ acts on $K(x,y)$ by
\begin{alignat*}{2}
-I &: \alpha\mapsto \alpha,~ x\mapsto \tfrac{1}{x},~ y\mapsto \tfrac{1}{y}, &\q \rho &: \alpha\mapsto \alpha,~ x\mapsto xy,~ y\mapsto \tfrac{1}{x}, \\
\tau &: \alpha\mapsto \ol{\alpha},~ x\mapsto y,~ y\mapsto x
\end{alignat*}
where $K=k(\alpha)$ and $[K:k]=2$. By the same calculation as in
Subcase 6.1, we have $K(x,y)^{\langle -I\rangle}=K(A,B)$ where $A$
and $B$ are the same given there in Subcase 6.1, and $\rho$ and
$\tau$ act on $K(A,B)$ by
\[
\rho:\alpha\mapsto \alpha,~A\mapsto B,~ B\mapsto \tfrac{1}{AB}, \quad
\tau:\alpha\mapsto \ol{\alpha},~ A\mapsto \tfrac{1}{B},~ B\mapsto \tfrac{1}{A}.
\]

These actions are the same as in the case of $G=S_3^{(1)}$ in Case
5. Thus $K(x,y)^G=K(A,B)^{\langle \rho,\tau\rangle}$ is
$k$-rational.

\medskip
Subcase 8.4. $(G,H)\simeq (D_6,\langle \rho^2,\tau \rangle)$.

The group $G$ acts on $K(x,y)$ by
\begin{alignat*}{2}
\rho^2 &: \alpha\mapsto \alpha,~ x\mapsto y,~ y\mapsto \tfrac{1}{xy}, &\q
\tau &: \alpha\mapsto \alpha,~ x\mapsto y,~ y\mapsto x, \\
\rho &: \alpha\mapsto \ol{\alpha},~ x\mapsto xy,~ y\mapsto \tfrac{1}{x}
\end{alignat*}
where $K=k(\alpha)$ and $[K:k]=2$.
We have $K(x,y)^{\langle \rho^2\rangle}=K(S,T)$ where $S$ and $T$ are defined in Lemma \ref{l3.3}, and $\tau$ and $\rho$ act on $K(S,T)$ by
\begin{align*}
\tau &: \alpha\mapsto\alpha,~ S\mapsto S,~ T\mapsto
\begin{cases}  \frac{S(S+\frac{1}{S}-1)}{T} & \text{if }\fn{char} k\ne 3, \\ \frac{1}{T} & \text{if }\fn{char}k=3, \end{cases} \\
\rho &: \alpha\mapsto \ol{\alpha},~ S\mapsto \frac{1}{S},~ T\mapsto
\begin{cases} \frac{S+\frac{1}{S}-1}{T} & \text{if } \fn{char}k\ne 3, \\ \frac{1}{T} & \text{if }\fn{char}k=3. \end{cases}
\end{align*}

When $\fn{char}k=3$, $K(x,y)^G=K(S,T)^{\langle \tau,\rho
\rangle}=K(S,T+\frac{1}{T})^{\langle\rho\rangle}=k(\alpha(S+1)/(S-1),T+\frac{1}{T})$
is $k$-rational.

When $\fn{char}k\ne 3$, define
\[
U:=S,\quad V:= T+\frac{S(S+\frac{1}{S}-1)}{T}.
\]

Then $K(S,T)^{\langle \tau \rangle}=K(U,V)$ and $\rho$ acts on
$K(U,V)$ by
\[
\rho: \alpha \mapsto \ol{\alpha},~ U\mapsto \tfrac{1}{U},~ V\mapsto \tfrac{V}{U}.
\]

Thus $K(x,y)^G=K(U,V)^{\langle \rho \rangle}$ is $k$-rational by
Theorem \ref{t1.11}.

\medskip
Subcase 8.5. $(G,H)=(D_6,\langle \rho^2,-\tau\rangle)$.

The group $G$ acts on $K(x,y)$ by
\begin{alignat*}{2}
\rho^2 &: \alpha\mapsto \alpha,~ x\mapsto y,~ y\mapsto \tfrac{1}{xy}, &\q
-\tau &: \alpha\mapsto \alpha,~ x\mapsto \tfrac{1}{y},~ y\mapsto \tfrac{1}{x}, \\
\rho &: \alpha\mapsto \ol{\alpha},~ x\mapsto xy,~ y\mapsto \tfrac{1}{x}
\end{alignat*}
where $K=k(\alpha)$ and $[K:k]=2$.
We have $K(x,y)^{\langle \rho^2\rangle}=K(S,T)$ where $S$ and $T$ are defined in Lemma \ref{l3.3} and the actions of $\tau$ and $\rho$ on $K(S,T)$ are given by
\begin{align*}
-\tau &: \alpha\mapsto\alpha,~ S\mapsto \tfrac{1}{S},~ T\mapsto
\begin{cases}  \frac{T}{S} & \text{if }\fn{char} k\ne 3, \\ T & \text{if }\fn{char}k=3, \end{cases} \\
\rho &: \alpha\mapsto \ol{\alpha},~ S\mapsto \tfrac{1}{S},~ T\mapsto
\begin{cases} \frac{S+\frac{1}{S}-1}{T} & \text{if } \fn{char}k\ne 3, \\ \frac{1}{T} & \text{if }\fn{char}k=3. \end{cases}
\end{align*}

If $\fn{char}k=3$, then $K(x,y)^{\langle \rho^2,-\tau,\rho
\rangle}=K(S,T)^{\langle -\tau,\rho
\rangle}=K(S+\frac{1}{S},T)^{\langle\rho\rangle}=k(S+\frac{1}{S},\alpha(T+1)/(T-1))$
is $k$-rational.

If $\fn{char}k\ne 3$, define
\[
U:=\frac{T(S+1)}{S}, \quad V:=\frac{T^2}{S},
\]
then $K(S,T)^{\langle -\tau\rangle}=K(U,V)$ and the action of $\rho$ on $K(U,V)$ is given by
\[
\rho:\alpha \mapsto \ol{\alpha},~ U\mapsto \tfrac{U(U^2-3V)}{V^2},~ V\mapsto \tfrac{(U^2-3V)^2}{V^3}.
\]

Define $W:=(U^2-3V)/V$. We find that
\[
\rho:\alpha \mapsto \ol{\alpha},~ U\mapsto \tfrac{W^2+3W}{U},~ W\mapsto W.
\]

Apply Lemma \ref{l4.1} (3), (4). We find $K(x,y)^G=K(U,W)^{\langle
\rho \rangle}$ is $k$-rational.
\end{proof}

Saltman discussed the relationship of $K_{\alpha}(M)^G$ and the
embedding problem in [\cite{Sa1}]; in particular, see Section 3 of
[\cite{Sa1}]. In the following, we reformulate the two exceptional
cases of Theorem \ref{t1.8} in terms of the embedding problem.

\begin{prop} \label{p4.2}
{\rm (1)}
Let $k$ be a field with $\fn{char}k\ne 2$, $a\in k\backslash k^2$ and $K=k(\sqrt{a})$.
Let $G=\langle \sigma\rangle$ act on $K(x,y)$ by
\[
\sigma: \sqrt{a} \mapsto -\sqrt{a},~x\mapsto \tfrac{1}{x},~ y\mapsto -\tfrac{1}{y}.
\]

Then $K(x,y)^G$ is rational over $k$ if and only if the quadratic extension $k(\sqrt{a})/k$ can be embedded into a $C_4$-extension of $k$,
i.e.\ there is a Galois extension $L/k$ such that $k\subset k(\sqrt{a})\subset L$ and $\fn{Gal}(L/k)\simeq C_4$.

{\rm (2)}
Let $k$ be a field with $\fn{char}k\ne 2$, $a,b\in k\backslash k^2$ such that $[K:k]=4$ where $K=k(\sqrt{a},\sqrt{b})$.
Let $G=\langle \sigma,\tau \rangle\simeq D_4$ act on $K(x,y)$ by
\begin{align*}
\sigma &: \sqrt{a}\mapsto -\sqrt{a},~ \sqrt{b}\mapsto\sqrt{b},~ x\mapsto \tfrac{1}{x},~y\mapsto -\tfrac{1}{y}, \\
\tau &: \sqrt{a}\mapsto \sqrt{a},~ \sqrt{b}\mapsto -\sqrt{b},~ x\mapsto x,~ y\mapsto -y.
\end{align*}

Then $K(x,y)^G$ is rational over $k$ if and only if there is a Galois extension $L/k$ such that $k\subset K \subset L$ and $\fn{Gal}(L/k)\simeq D_4$.
\end{prop}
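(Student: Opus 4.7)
The plan is to deduce both parts from Theorem \ref{t1.8} via classical Galois-theoretic criteria for embedding small extensions into $C_4$ or $D_4$.

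For part (1), Theorem \ref{t1.8} (i) gives that $K(x,y)^G$ is $k$-rational iff $(a,-1)_k = 0$, so I would invoke Witt's classical theorem: the quadratic $k(\sqrt{a})/k$ embeds in a cyclic $C_4$-extension of $k$ iff $a$ is a sum of two squares in $k$. The equivalence of $(a,-1)_k = 0$ with $a$ being a sum of two squares is a quick rearrangement: the Hilbert symbol vanishing means $-1 \in N_{k(\sqrt{a})/k}(k(\sqrt{a})^{\times})$, i.e.\ $-1 = r^2 - as^2$ for some $r,s \in k$, which (using $a\notin(k^{\times})^2$ to force $s\ne 0$) becomes $a = (r/s)^2 + (1/s)^2$.

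For part (2), Theorem \ref{t1.8} (ii) gives rationality iff $(a,-b)_k = 0$, so it suffices to show this is equivalent to the existence of the $D_4$-extension $L/k$ with the lifting structure compatible with the $G$-action. For sufficiency, given $-b = u^2 - av^2$ with $u,v \in k$, I would set $\gamma = v + u/\sqrt{a} \in k(\sqrt{a})$ and define $L = K(\sqrt{\gamma})$. Writing $\sigma_a,\sigma_b$ for the generators of $\fn{Gal}(K/k)$ acting by $\sqrt{a}\mapsto -\sqrt{a},\sqrt{b}\mapsto\sqrt{b}$ and $\sqrt{a}\mapsto\sqrt{a},\sqrt{b}\mapsto -\sqrt{b}$ respectively, a direct computation yields $N_{k(\sqrt{a})/k}(\gamma) = v^2 - u^2/a = b/a$ and $\sigma_b(\gamma) = \gamma$; hence $\sigma_a(\gamma)/\gamma = (\sqrt{ab}/(a\gamma))^2 \in (K^{\times})^2$, so $L/k$ is Galois of degree $8$. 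Checking that the lift $\tilde\sigma_a$ has order $4$ in $\fn{Gal}(L/k)$ and satisfies $\tilde\sigma_b\tilde\sigma_a\tilde\sigma_b^{-1} = \tilde\sigma_a^{-1}$ then gives $\fn{Gal}(L/k) \simeq D_4$.

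The converse direction can be handled cohomologically: the central extension $1 \to C_2 \to \fn{Gal}(L/k) \to \fn{Gal}(K/k) \simeq V_4 \to 1$ prescribed by the $G$-action determines a class in $H^2(V_4,\mu_2)$ whose image in $\fn{Br}(k)[2]$ under inflation is the cup product $\chi_a\cup\chi_a + \chi_a\cup\chi_b = (a,-1) + (a,b) = (a,-b)$, whose vanishing is precisely $(a,-b)_k = 0$. The main obstacle is to make this cohomological identification precise: the biquadratic $K/k$ admits three non-equivalent $D_4$-embedding problems distinguished by which quadratic subfield $k(\sqrt{a})$, $k(\sqrt{b})$, or $k(\sqrt{ab})$ is fixed by the cyclic $C_4 \subset D_4$, with respective obstructions $(a,-b)_k$, $(b,-a)_k$, and $(a,b)_k$; the $G$-action in Theorem \ref{t1.8} (ii) (in which $\sigma$ lifts to order $4$) singles out the first, and this selection must be carefully matched throughout the argument.
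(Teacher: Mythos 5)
Your proposal follows essentially the same route as the paper: the paper's entire proof is to point back at Case 3 and Subcase 7.1 of Theorem \ref{t1.8} (which give the criteria $(a,-1)_k=0$ and $(a,-b)_k=0$) and then cite the classical obstruction formulas for the embedding problems $1\to C_2\to C_4\to C_2\to 1$ and $1\to C_2\to D_4\to C_2\times C_2\to 1$ from Lam, Kiming and Ledet, whereas you re-prove those classical facts. Your arguments are sound, but three small points need repair. First, in (1), $s=0$ is not excluded by $a\notin k^2$: if $s=0$ then $-1=r^2$, which says $-1\in k^{\times 2}$; the conclusion survives because then $a=\bigl(\frac{a+1}{2}\bigr)^2+\bigl(r\cdot\frac{a-1}{2}\bigr)^2$ is still a sum of two squares, but your stated justification is wrong. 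Second, in the construction for (2) you must also arrange $\gamma\notin K^{\times 2}$ (replacing $\gamma$ by $r\gamma$ for suitable $r\in k^{\times}$ if necessary) so that $[L:k]=8$. Third, your final labelling is swapped: since $\sigma$ negates $\sqrt{a}$ and fixes $\sqrt{b}$, the cyclic subgroup $\langle\tilde\sigma\rangle\simeq C_4$ has fixed field $k(\sqrt{b})$, not $k(\sqrt{a})$; the obstruction $(a,-b)_k$ you attach to the relevant problem is nevertheless the correct one, and your caution about matching the three inequivalent $D_4$-embedding problems is well taken --- indeed the proposition's statement (2) must be read as the embedding problem determined by the given surjection $G\to \fn{Gal}(K/k)$, since the paper's own Example \ref{ex4.3} (with $k=\bm{Q}$, $a=-1$, $b=p$, $p\equiv 1 \pmod 4$) gives a $K$ admitting some $D_4$-overfield while $K(x,y)^G$ is irrational.
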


\begin{proof}
Check the proof of Case 3 and Subcase 7.1 of Theorem \ref{t1.8},
and use the well-known results of the embedding problem. For
example, for the obstruction to the $1 \to C_2 \to C_4 \to C_2 \to
1$, see [\cite{La}, Exercise 8, page 217; \cite{Ki}, page 837;
\cite{Le}, page 37]; for the obstruction to the $1 \to C_2 \to D_4
\to C_2 \times C_2 \to 1$, see [\cite{Ki}, page 840; \cite{Le},
page 38].
\end{proof}

\begin{example} \label{ex4.3}
Let $k$ be a field of $\fn{char}k\ne 2$, $K=k(\alpha,\beta)$ be a
biquadratic extension of $k$. We consider the following actions of
$k$-automorphisms of $D_4=\langle\sigma,\tau \rangle$ on $K(x,y)$:
\begin{align*}
\sigma_\alpha &: \alpha \mapsto -\alpha,~ \beta\mapsto \beta,~ x\mapsto y,~ y\mapsto\tfrac{1}{x}, \\
\sigma_\beta &: \alpha\mapsto \alpha,~ \beta\mapsto -\beta,~ x\mapsto y,~ y\mapsto \tfrac{1}{x}, \\
\sigma_{\alpha\beta} &: \alpha \mapsto -\alpha,~ \beta\mapsto -\beta,~ x\mapsto y,~ y\mapsto \tfrac{1}{x}, \\
\tau_\alpha &: \alpha \mapsto -\alpha,~ \beta\mapsto \beta,~ x\mapsto y,~ y\mapsto x, \\
\tau_\beta &: \alpha\mapsto \alpha,~ \beta\mapsto -\beta,~ x\mapsto y,~ y\mapsto x, \\
\tau_{\alpha\beta} &: \alpha\mapsto -\alpha,~ \beta\mapsto -\beta,~ x\mapsto y,~ y\mapsto x.
\end{align*}

Define $L_{\alpha,\beta}=K(x,y)^{\langle
\sigma_\alpha,\tau_\beta\rangle}$ where $a=\alpha^2$ and
$b=\beta^2$. It is not difficult to verify that
\begin{enumerate}
\item[(1)] $L_{\alpha,\beta}$ is rational over $k$ iff
$L_{\alpha,\alpha\beta}$ is rational over $k$ iff $(a,-b)_k=0$,
\item[(2)] $L_{\beta,\alpha}$ is rational over $k$ iff
$L_{\beta,\alpha\beta}$ is rational over $k$ iff $(b,-a)_k=0$,
\item[(3)] $L_{\alpha\beta,\alpha}$ is rational over $k$ iff
$L_{\alpha\beta,\beta}$ is rational over $k$ iff $(a,b)_k=0$.
\end{enumerate}

In particular, if $\sqrt{-1}\in k$ then the rationality of the six
fixed fields $L_{\alpha,\beta}$, $L_{\alpha,\alpha\beta}$,
$L_{\beta,\alpha}$, $L_{\beta,\alpha\beta}$,
$L_{\alpha\beta,\alpha}$ and $L_{\alpha\beta,\beta}$ over $k$
coincide.

On the other hand, consider the case $k=\bm{Q}$,
$K=\bm{Q}(\alpha,\beta)$ where $\alpha^2=-1$ and $\beta^2=p$ where
$p$ is a prime number with $p\equiv 1$ (mod 4). Then
$L_{\alpha,\beta}$ and $L_{\alpha,\alpha\beta}$ are not rational
over $\bm{Q}$ because $(-1,-p)_{\bm{Q}}=(-1,-1)_{\bm{Q}} \neq 0$,
while $L_{\beta,\alpha}$, $L_{\beta,\alpha\beta}$,
$L_{\alpha\beta,\alpha}$ and $L_{\alpha\beta,\beta}$ are rational
over $\bm{Q}$.
\end{example}

\section{Applications of Theorem \ref{t1.8}}

\begin{lemma}[{[\cite{AHK}, Theorem 3.1]}] \label{l5.1}
Let $L$ be any field, $L(x)$ be the rational function field of one
variable over $L$, and $G$ be a finite group acting on $L(x)$.
Suppose that, for any $\sigma\in G$, $\sigma(L)\subset L$ and
$\sigma(x)=a_\sigma x+b_\sigma$ where $a_\sigma, b_\sigma \in L$
and $a_\sigma \ne 0$. Then $L(x)^G=L^G(f)$ for some polynomial
$f\in L[x]^G$. In fact, if $m=\min \{\deg g(x): g(x)\in
L[x]^G\backslash L\}$, any polynomial $f\in L[x]^G$ with $\deg
f=m$ satisfies the property $L(x)^G=L^G(f)$.
\end{lemma}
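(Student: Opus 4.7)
Plan: This is a Galois-descent style statement, and my strategy is to prove two structural facts separately: the invariant subring $L[x]^G$ is itself a polynomial ring in $f$, and the invariant subfield $L(x)^G$ equals its fraction field. I avoid any degree count through Artin--Hilbert 90 machinery in favor of a direct polynomial-division argument paired with a norm construction.

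First I would record the basic setup. The subring $L[x]$ is stable under $G$, since $\sigma(L) \subseteq L$ and $\sigma(x) = a_\sigma x + b_\sigma \in L[x]$, and each $\sigma$ preserves the degree of polynomials because $a_\sigma \ne 0$ makes an affine substitution degree-preserving. The polynomial $\prod_{\sigma \in G}\sigma(x)$ is $G$-invariant by the reindexing $\tau\cdot\prod_\sigma\sigma(x) = \prod_\sigma(\tau\sigma)(x) = \prod_{\sigma' \in G}\sigma'(x)$, so $L[x]^G \setminus L$ is non-empty and the integer $m$ is well-defined.

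Next I would prove $L[x]^G = L^G[f]$ by induction on $\deg g$. For any $g \in L[x]^G$, divide by $f$ in $L[x]$ (permissible because the leading coefficient of $f$ is a unit in $L$) to obtain $g = qf + r$ with $\deg r < m$. The uniqueness of quotient and remainder under polynomial division, combined with the $G$-invariance of $g$ and $f$, forces $q, r \in L[x]^G$. Minimality of $m$ yields $r \in L[x]^G \cap L = L^G$, and the inductive hypothesis gives $q \in L^G[f]$, hence $g \in L^G[f]$.

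Finally I would show $L(x)^G = \fn{Frac}(L[x]^G)$ by a norm argument. Given $u = p/q \in L(x)^G$ with $p,q \in L[x]$, the same reindexing shows $Q := \prod_{\sigma \in G}\sigma(q) \in L[x]^G$, and the product $P := uQ = p \cdot \prod_{\sigma \ne 1}\sigma(q)$ is a bona fide polynomial in $L[x]$. From $\tau(P) = \tau(u)\tau(Q) = uQ = P$ we conclude $P \in L[x]^G$, so $u = P/Q \in \fn{Frac}(L[x]^G) = \fn{Frac}(L^G[f]) = L^G(f)$. Combined with the obvious inclusion $L^G(f) \subseteq L(x)^G$, this gives the desired identity. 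I do not foresee a serious obstacle: the key technical point is that the affine form $\sigma(x) = a_\sigma x + b_\sigma$ makes the $G$-action degree-preserving, which is precisely what allows polynomial division to respect $G$-invariance.
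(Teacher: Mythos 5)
Your proof is correct and complete. Note that the paper does not actually prove this lemma---it is quoted verbatim from [\cite{AHK}, Theorem 3.1]---so there is no internal proof to compare against; your two-step argument (division with remainder to get $L[x]^G=L^G[f]$ via degree preservation and uniqueness of quotient and remainder, then the norm trick $u=P/Q$ with $Q=\prod_{\sigma\in G}\sigma(q)$ to identify $L(x)^G$ with $\fn{Frac}(L[x]^G)$) is the standard and essentially the original route, and every step checks out, including the existence of a nonconstant invariant $\prod_{\sigma\in G}\sigma(x)$ which makes $m$ well defined.
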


\bigskip
\begin{proof}[\indent Proof of Theorem \ref{t1.10}] ~

Consider first the case $M=M_1\oplus M_2$ where $\fn{rank}_{\bm{Z}} M_1=3$ and $\fn{rank}_{\bm{Z}} M_2=1$.

Write $M_1=\bigoplus_{1\le i\le 3} \bm{Z}\cdot x_i$, $M_2=\bm{Z}\cdot y$, and $k(M)=k(x_1,x_2,x_3,y)$.

Since $G$ acts on $k(M)$ by purely monomial $k$-automorphisms,
it follows that $\sigma(y)=y$ or $\frac{1}{y}$ for any $\sigma\in G$.
Define $z=\frac{y-1}{y+1}$ (if $\fn{char}k\ne 2$),
and $z=\frac{1}{y+1}$ (if $\fn{char}k=2$).
It follows that $\sigma(z)=z$ or $-z$ (if $\fn{char}k\ne 2$),
and $\sigma(z)=z$ or $z+1$ (if $\fn{char}k=2$).
Apply Lemma \ref{l5.1}.
We find that $k(M)^G=k(x_1,x_2,x_3)^G(f)$ for some polynomial $f\in k(x_1,x_2,x_3)[z]^G$.

Since $G$ acts on $k(x_1,x_2,x_3)^G$ by purely monomial
$k$-automorphisms, the fixed field $k(x_1,x_2,x_3)^G$ is
$k$-rational by Theorem \ref{t1.14}. Hence the result.

From now on we will concentrate on the case $M=M_1\oplus M_2$ with
$\fn{rank}_{\bm{Z}}M_1=\fn{rank}_{\bm{Z}} M_2=2$.

\bigskip
Step 1. Write $M_1=\bigoplus_{1\le i\le 2} \bm{Z}\cdot x_i$,
$M_2=\bigoplus_{1\le i\le 2} \bm{Z}\cdot y_i$, and
$k(M)=k(x_1,x_2,y_1,y_2)$.

We will apply Theorem \ref{t1.8} to show that, for most
situations, $k(M)$ is $k$-rational. In fact, regarding
$k(x_1,x_2)$ as the field $K$, we will use Theorem \ref{t1.8} to
see whether $K(y_1,y_2)^G$ is $k$-rational. Alternatively, we may
regard $k(y_1,y_2)$ as the field $K'$ and try to study
$K'(x_1,x_2)^G$.

In both situations, $G$ acts on $K(y_1,y_2)$ and $K'(x_1,x_2)$ by
purely quasi-monomial $k$-automorphisms. Apply Theorem \ref{t1.8}.
If we are lucky, then either $K(y_1,y_2)^G$ is rational over
$K^G=k(x_1,x_2)^G$ or $K'(x_1,x_2)^G$ is rational over
$K'^G=k(y_1,y_2)^G$. Since $k(x_1,x_2)^G$ and $k(y_1,y_2)^G$ are
$k$-rational by Theorem \ref{t1.13}, it follows that either
$K(y_1,y_2)^G$ or $K'(x_1,x_2)^G$ is $k$-rational.

In summary, $k(M)^G$ is $k$-rational except that both
$K(y_1,y_2)^G$ and $K'(x_1,x_2)^G$ fall into the exceptional
situation of Theorem \ref{t1.8}.

\medskip
Step 2. Suppose that $K(y_1,y_2)^G$ and $K'(x_1,x_2)^G$ fall into
the exceptional situation of Theorem \ref{t1.8}. In particular,
$\fn{char}k \ne 2$.

Without loss of generality, we may assume $M$ is a faithful
$G$-lattice, i.e.\ the subgroup $\{\sigma\in G: \sigma(x_i)=x_i$,
$\sigma(y_i)=y_i$ for $1\le i\le 2\}=\{1\}$.

Define $N_1=\{\sigma\in G:\sigma(x_i)=x_i$ for $1\le i\le 2\}$,
$N_2=\{\sigma\in G:\sigma(y_i)=y_i$ for $1\le i\le 2\}$. Note that
$N_1\triangleleft G$, $N_2\triangleleft G$, and $N_1\cap
N_2=\{1\}$.

In the field $K(y_1,y_2)$, $N_2$ is the subgroup $N$ of Theorem
\ref{t1.8} while $N_1$ is the subgroup $H$ of Theorem \ref{t1.8}.
The reader may interpret $N_1$ and $N_2$ for the field
$K'(x_1,x_2)$ himself.

Since $K(y_1,y_2)^G$ and $K'(x_1,x_2)^G$ fall into the exceptional
situation, we get (i) $(G/N_2,N_1N_2/N_2)\simeq (C_4,C_2)$ or
$(D_4,C_2)$, and (ii) $(G/N_1,N_1N_2/N_1)\simeq (C_4,C_2)$ or
$(D_4,C_2)$.

Note that $N_1\simeq N_1N_2/N_2$ and $N_2\simeq N_1N_2/N_1$. Hence
$N_1 \simeq N_2\simeq C_2$. It follows that either $G/N_1\simeq
C_4\simeq G/N_2$ or $G/N_1\simeq D_4\simeq G/N_2$; the possibility
that $(G/N_1,G/N_2)\simeq (C_4,D_4)$ or $(D_4,C_4)$ is ruled out
because $|G/N_1|=|G/N_2|$.

\medskip
Step 3. We consider the cases $(G/N_1,G/N_2)\simeq (C_4,C_4)$ and
$(D_4,D_4)$ separately.

\medskip
\begin{Case}{1} $G/N_1\simeq C_4$ and $G/N_2 \simeq C_4$. \end{Case}

Write $N_1=\langle \tau_1\rangle$, $N_2=\langle \tau_2\rangle$.

Note that $\tau_2$ and $G/N_1$ act faithfully on $M_1$. By Theorem
\ref{t3.1}, $G/N_1=\langle\sigma_1\rangle \simeq C_4$ and we may
choose the generators $x_1$, $x_2$ of $k(M_1)$ satisfying
$\sigma_1:x_1\mapsto x_2$, $x_2\mapsto\frac{1}{x_1}$,
$\tau_2=\sigma_1^2: x_1\mapsto\frac{1}{x_1}$,
$x_2\mapsto\frac{1}{x_2}$ (also see Case 3 in the proof of Theorem
\ref{t1.8}).

Since $\tau_2$ acts trivially on $k(M_2)$, we find that
$k(M)^{\langle\tau_2\rangle}=k(x_1,x_2)^{\langle\tau_2\rangle}(y_1,y_2)=k(s,t)(y_1,y_2)$
where $s$, $t$ are defined as
\begin{equation}
s=\frac{x_1x_2+1}{x_1+x_2},\quad t=\frac{x_1x_2-1}{x_1-x_2}
\label{eq10}
\end{equation}
by Lemma \ref{l3.2}. Moreover, $\sigma_1(s)=\frac{1}{s}$,
$\sigma_1(t)=-\frac{1}{t}$ (see Case 3 in the proof of Theorem
\ref{t1.8}).

On the other hand, $G/\langle\tau_2\rangle \simeq G/N_2 \simeq
C_4$ acts faithfully on $k(M_2)$. We may choose the generators
$y_1$, $y_2$ and $G/N_2=\langle\sigma_2\rangle$ such that
$\sigma_2:y_1\mapsto y_2\mapsto \frac{1}{y_1}$ by Theorem
\ref{t3.1} again.

Now
$k(M)^G=\{k(x_1,x_2)(y_1,y_2)^{N_2}\}^{G/N_2}=k(s,t)(y_1,y_2)^{\langle\sigma_2\rangle}$.

The action of $\sigma_2$ on $k(s,t)$ is induced by the action of
$G$ on $k(s,t)$. But the action of $G$ on $k(s,t)$ is just that of
$\sigma_1$. Hence $\sigma_2(s)=\frac{1}{s}$ and
$\sigma_2(t)=-\frac{1}{t}$.

Define $u=\frac{s+1}{s-1}$. Then $\sigma_2(u)=-u$. Hence
$k(s,t)(y_1,y_2)^{\langle\sigma_2\rangle}=k(y_1,y_2,t)(u)^{\langle\sigma_2\rangle}=k(y_1,y_2,t)^{\langle\sigma_2\rangle}(f)$
for some polynomial $f$ by Lemma \ref{l5.1}.

Note that $\sigma_2^2(t)=t$ and $\sigma_2^2(y_i)=\frac{1}{y_i}$
for $i=1,2$. Define $z_1=\frac{y_1y_2+1}{y_1+y_2},
z_2=\frac{y_1y_2-1}{y_1-y_2}$. By Lemma \ref{l3.2}
$k(y_1,y_2,t)^{\langle\sigma_2^2 \rangle}=k(z_1,z_2,t)$ and
$\sigma_2(z_1)=\frac{1}{z_1}$, $\sigma_2(z_2)=\frac{-1}{z_2}$.

Regard $k(z_1,z_2,t)^{\langle\sigma_2
\rangle}=k(z_2,t)(z_1)^{\langle\sigma_2 \rangle}$ as the function
field of a $1$-dimensional algebraic torus over the field
$k(z_2,t)^{\langle\sigma_2 \rangle}$. We find that
$k(z_1,z_2,t)^{\langle\sigma_2 \rangle}$ is rational over
$k(z_2,t)^{\langle\sigma_2 \rangle}$ (see Example \ref{ex1.2}). On
the other hand, the field $k(z_2,t)^{\langle\sigma_2 \rangle}$ is
$k$-rational by Theorem \ref{t1.13}. Hence the result.

\medskip
\begin{Case}{2} $G/N_1\simeq D_4$ and $G/N_2 \simeq D_4$. \end{Case}

The proof is almost the same as Case 1.

Write $N_1=\langle\tau_1\rangle$, $N_2=\langle\tau_2\rangle$,
$G/N_1=\langle\sigma_1,\lambda_1\rangle \simeq D_4$,
$G/N_2=\langle\sigma_2,\lambda_2\rangle \simeq D_4$.

As before, apply Theorem \ref{t3.1}. We may assume that
$\tau_2=\sigma_1^2$ and
\begin{alignat*}{2}
\sigma_1 &: x_1\mapsto x_2\mapsto \tfrac{1}{x_1}; &\q \lambda_1 &: x_1\leftrightarrow x_2; \\
\sigma_2 &: y_1\mapsto y_2\mapsto \tfrac{1}{y_1}; & \lambda_2 &:
y_1\leftrightarrow y_2.
\end{alignat*}

It follows that $k(x_1,x_2)^{\langle\tau_2\rangle}=k(s,t)$ where
$s$ and $t$ are defined by \eqref{eq10}. Moreover, as in Subcase
7.1 in the proof of Theorem \ref{t1.8}, we have
\[
\sigma_1: s\mapsto \tfrac{1}{s},~ t\mapsto -\tfrac{1}{t}; \quad
\lambda_1: s\mapsto s,~ t\mapsto -t.
\]

Note that
$k(M)^G=\{k(x_1,x_2)(y_1,y_2)^{N_2}\}^{G/N_2}=k(s,t)(y_1,y_2)^{G/N_2}$.
The action of $G/N_2$ on $k(s,t)$ is the action induced by
$G/N_1=\langle\sigma_1,\lambda_1\rangle$ on $k(s,t)$. In
particular, it is a purely monomial action on $k(s)$.

Define $u=\frac{s+1}{s-1}$. The rationality problem of
$k(s,t)(y_1,y_2)^{G/N_2}$ is reduced to that of
$k(y_1,y_2,t)^{G/N_2}$ by Lemma \ref{l5.1}. The proof that
$k(y_1,y_2,t)^{G/N_2}$ is $k$-rational is almost the same as that
of the last stage of Case 1. The details of the proof is omitted.
\end{proof}

\begin{prop} \label{p5.2}
Let $G$ be a finite group, $M$ a $G$-lattice. Assume that
$M=M_1\oplus M_2\oplus M_3$ as $\bm{Z}[G]$-modules where
$\fn{rank}_{\bm{Z}} M_1=\fn{rank}_{\bm{Z}} M_2=2$ and
$\fn{rank}_{\bm{Z}}M_3=1$. For any field $k$, suppose that $G$
acts on $k(M)$ by purely monomial $k$-automorphisms. Then $k(M)^G$
is $k$-rational.
\end{prop}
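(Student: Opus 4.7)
The plan is to reduce this five-dimensional rationality problem to the four-dimensional decomposable case already handled by Theorem \ref{t1.10} by peeling off the one-dimensional summand $M_3$ with the aid of Lemma \ref{l5.1}. Write $M_3 = \bm{Z}\cdot y$ and set $N = M_1 \oplus M_2$, a $G$-sublattice of $\bm{Z}$-rank $4$. Since $G$ acts on $k(M)$ by purely monomial $k$-automorphisms, we have $\sigma(y) \in \{y, y^{-1}\}$ for every $\sigma \in G$, so the subgroup $H = \{\sigma \in G : \sigma(y) = y\}$ is normal in $G$ of index $1$ or $2$.

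Change variables in the one-dimensional factor exactly as in the opening reduction of the proof of Theorem \ref{t1.10}: put $z = (y-1)/(y+1)$ when $\fn{char} k \ne 2$, and $z = 1/(y+1)$ when $\fn{char} k = 2$. Then $k(M) = k(N)(y) = k(N)(z)$, and each $\sigma \in G$ acts on $z$ as an affine $k$-linear transformation: $\sigma(z) = z$ for $\sigma \in H$, and for $\sigma \notin H$ one has $\sigma(z) = -z$ (resp.\ $\sigma(z) = z+1$) when $\fn{char} k \ne 2$ (resp.\ $\fn{char} k = 2$). Since $N$ is a $G$-sublattice, we also have $\sigma(k(N)) \subset k(N)$. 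Thus all the hypotheses of Lemma \ref{l5.1} are met with base field $L = k(N)$, and we obtain $k(M)^G = k(N)^G(f)$ for some polynomial $f \in k(N)[z]^G$.

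Finally, observe that $N = M_1 \oplus M_2$ is a decomposable $G$-lattice of $\bm{Z}$-rank $4$ and the $G$-action on $k(N)$ (which is the restriction of the action on $k(M)$) is again by purely monomial $k$-automorphisms. Hence Theorem \ref{t1.10} applies and yields that $k(N)^G$ is $k$-rational. Since $k(M)^G = k(N)^G(f)$ is a simple transcendental extension of a $k$-rational field, it is itself $k$-rational.

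There is no serious obstacle in this argument: the substitution-plus-Lemma \ref{l5.1} trick is precisely the device already used at the start of the proof of Theorem \ref{t1.10}, and the entire genuine difficulty has been absorbed into that theorem. The only thing to double-check in a full write-up is that the two characteristic-dependent choices of $z$ really do give affine actions with coefficients in $k$ (so that the invariant polynomial $f$ produced by Lemma \ref{l5.1} lives in $k(N)[z]$ rather than in some larger ring), which is immediate from the explicit formulas above.
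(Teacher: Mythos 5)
Your argument is correct and is essentially identical to the paper's own proof: both peel off the rank-one summand $M_3$ by the fractional-linear change of variable, invoke Lemma \ref{l5.1} to get $k(M)^G=k(M_1\oplus M_2)^G(f)$, and then apply Theorem \ref{t1.10} to the rank-$4$ decomposable lattice $M_1\oplus M_2$. No issues.
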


\begin{proof}
Write $M_3=\bm{Z}\cdot z$.
Then $k(M)=k(M_1\oplus M_2)(z)$ and $\sigma(z)=z$ or $\frac{1}{z}$ for any $\sigma \in G$.
Define $u=\frac{z+1}{z-1}$ (if $\fn{char}k\ne 2$),
and $u=\frac{1}{z+1}$ (if $\fn{char}k=2$).
It follows that $\sigma(u)=\pm u$ (if $\fn{char}k\ne 2$), and $\sigma(u)=u$ or $u+1$ (if $\fn{char}k=2$).

Apply Lemma \ref{l5.1}.
We find $k(M)^G=k(M_1\oplus M_2)^G(f)$ for some polynomial $f$.
Since $k(M_1\oplus M_2)^G$ is $k$-rational by Theorem \ref{t1.10},
it follows that $k(M)^G$ is $k$-rational.
\end{proof}

\section{Some decomposable lattices of rank 5}

Before extending the method in the proof of Theorem \ref{t1.10}, we recall the definition of retract rationality.

\begin{defn}[{[\cite{SaA,Ka4}]}] \label{d6.1}
Let $k$ be an infinite field and $L/k$ be a field extension. $L$
is called retract $k$-rational if $L$ is the quotient field of
some integral domain $A$ where $k\subset A\subset L$ satisfying
the conditions that there exist a polynomial ring
$k[X_1,\ldots,X_m]$, some non-zero polynomial $f\in
k[X_1,\ldots,X_n]$, and $k$-algebra morphisms $\varphi:A\to
k[X_1,\ldots,X_m][1/f]$, $\psi: k[X_1,\ldots,X_m][1/f]\to A$ such
that $\psi\circ\varphi =1_A$.
\end{defn}

It is not difficult to see that
``$k$-rational"$\Rightarrow$``stably
$k$-rational"$\Rightarrow$``retract $k$-rational"
$\Rightarrow$``$k$-unirational".

\begin{theorem} \label{t6.2}
Let $G$ be a finite group, $M$ be a $G$-lattice. Assume that {\rm
(i)} $M=M_1\oplus M_2$ as $\bm{Z}[G]$-modules where
$\fn{rank}_{\bm{Z}}M_1=3$ and $\fn{rank}_{\bm{Z}}M_2=2$, {\rm
(ii)} either $M_1$ or $M_2$ is a faithful $G$-lattice. Let $k$ be
a field and $G$ act on $k(M)$ by purely monomial
$k$-automorphisms. Then $k(M)^G$ is $k$-rational except the
following situation \rm{:} $\fn{char}k\ne 2$, $G=\langle
\sigma,\tau\rangle \simeq D_4$ and $M_1=\bigoplus_{1\le i\le 3}
\bm{Z} x_i$, $M_2=\bigoplus_{1\le j\le 2} \bm{Z}y_j$ such that
$\sigma:x_1\leftrightarrow x_2$, $x_3\mapsto -x_1-x_2-x_3$,
$y_1\mapsto y_2\mapsto -y_1$, $\tau: x_1\leftrightarrow x_3$,
$x_2\mapsto -x_1-x_2-x_3$, $y_1\leftrightarrow y_2$ where the
$\bm{Z}[G]$-module structure of $M$ is written additively.

For the exceptional case, $k(M)^G$ is not retract $k$-rational. In
particular, it is not $k$-rational.
\end{theorem}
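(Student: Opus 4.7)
The strategy extends the proof of Theorem \ref{t1.10}: I would decompose $k(M)=K(y_1,y_2)$ with $K:=k(M_1)$ and apply Theorem \ref{t1.8} to the induced purely quasi-monomial action of $G$ on $K(y_1,y_2)$ over the base $K^G=k(M_1)^G$. Since $k(M_1)^G$ is $k$-rational by Theorem \ref{t1.14}, it suffices to prove that $K(y_1,y_2)^G$ is rational over $K^G$. In the notation of Theorem \ref{t1.8} one has $N=\fn{Ker}(G\to\fn{Aut}(M_2))$ and $H=\fn{Ker}(G\to\fn{Aut}(M_1))$; the two exceptional cases force $H\neq 1$ so that $M_1$ is not faithful, and the hypothesis of Theorem \ref{t6.2} then forces $M_2$ to be faithful. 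Hence $N=1$, $G\in\{C_4,D_4\}$, and $H\simeq C_2$ is the unique normal subgroup of order $2$ (central in both cases). In every other configuration the reduction already delivers $k$-rationality of $k(M)^G$.

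For the $C_4$-exception write $G=\langle\tau\rangle$ and $H=\langle\tau^2\rangle$, and first pass to $k(M)^H$: since $\tau^2$ fixes $M_1$ and acts as $-I$ on $M_2$, Lemma \ref{l3.2} gives $k(M)^H=k(M_1)(s,t)$ with the explicit $s,t$ of that lemma, on which $\bar\tau\in G/H$ acts by $s\mapsto 1/s$, $t\mapsto -1/t$ together with its faithful rank-$3$ $C_2$-action on $M_1$. Setting $u=(s+1)/(s-1)$ in characteristic $\neq 2$ (with an analogous substitution in characteristic $2$) gives $\bar\tau(u)=-u$, and Lemma \ref{l5.1} reduces the rationality question to that of $k(M_1)(t)^{\bar\tau}$. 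The latter is a $C_2$-monomial action on four variables, purely monomial on the $x_i$ and with coefficient $-1$ only on $t$; since there are only finitely many faithful rank-$3$ $C_2$-lattice structures for $M_1$, I would treat each structure separately, pairing $t$ with a suitable sign-type $x_i$ and invoking Lemmas \ref{l3.1}, \ref{l3.2}, Proposition \ref{p1.7}, and Theorems \ref{t1.11}, \ref{t1.13} to exhibit a rational transcendence basis.

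For the $D_4$-exception write $G=\langle\sigma,\tau\rangle$ with $H=\langle\sigma^2\rangle$ central; the same reduction (fix by $H$, Lemmas \ref{l3.2} and \ref{l5.1}) leaves the rationality question for $k(M_1)(t)^{V_4}$, where $V_4=G/H$ acts faithfully on the rank-$3$ lattice $M_1$. I would enumerate the faithful rank-$3$ $V_4$-lattices and, using the same toolbox, establish rationality in every case except the one $M_1$-structure stated in Theorem \ref{t6.2}, namely the lattice on which the two generating involutions act by $x_1\leftrightarrow x_2,\,x_3\mapsto-x_1-x_2-x_3$ and $x_1\leftrightarrow x_3,\,x_2\mapsto-x_1-x_2-x_3$ (which becomes the augmentation ideal of $V_4$ after introducing $x_4=-x_1-x_2-x_3$, so that $V_4\subset S_4$ acts by coordinate permutations). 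Matching this obstructing $M_1$-structure with the prescribed action of $\sigma$ and $\tau$ on $(y_1,y_2)$ in the theorem is the main bookkeeping task of this case.

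Finally, for the exceptional $M$ I must rule out retract $k$-rationality. My plan is to invoke Saltman's lattice criterion (compatible with Definition \ref{d6.1}): construct an explicit flasque resolution $0\to M^\circ\to P\to M^\ast\to 0$ of the dual lattice with $P$ a permutation $G$-lattice, and exhibit a subgroup $G'\le G$ for which $\widehat H^{-1}(G',M^\circ)\neq 0$; this rules out invertibility of $M^\circ$ and hence retract $k$-rationality of $k(M)^G$. I expect this to be the main obstacle of the proof---producing an explicit flasque resolution of the rank-$5$ lattice $M$ and identifying a subgroup with nonvanishing Tate cohomology, in the same spirit as the cohomological obstructions behind Kunyavskii's fifteen non-retract-rational three-dimensional tori in Theorem \ref{t1.12}.
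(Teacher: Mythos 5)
Your reduction to the two exceptional pairs $(G,H)\simeq(C_4,C_2)$ and $(D_4,C_2)$ (with $N=1$ because $M_2$ must then be faithful) agrees with Step 1 of the paper's proof, but from that point on you take a much harder road, and its hardest part is left unexecuted. After fixing by $H$ and applying Lemmas \ref{l3.2} and \ref{l5.1} you are left with a four-variable \emph{monomial} (not purely monomial) action of $C_2$ resp.\ $V_4$ on $k(M_1)(t)$, and you propose to enumerate all faithful rank-$3$ $C_2$- and $V_4$-lattices and settle each by hand; for $V_4$ this is a substantial list of conjugacy classes in $GL_3(\bm{Z})$, each giving a genuinely four-dimensional monomial rationality problem. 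The paper instead exchanges the roles of $M_1$ and $M_2$: it writes $k(M)^G=\{k(M_2)^H(M_1)\}^{G/H}$ (resp.\ $k(M_2)(M_1)^G$) and observes that this is the function field of a \emph{three-dimensional algebraic torus} over $k(M_2)^H$ (resp.\ $k(M_2)^G$), since $G/H$ acts faithfully on that base field. Kunyavskii's classification (Theorem \ref{t1.12}) then disposes of the $C_4$ case at once (every torus split by a $C_2$-extension is rational) and, in the $D_4$ case, isolates the single non-rational lattice $U_1$, which is exactly your ``augmentation-ideal'' lattice. Your enumeration, even if completed, would only prove rationality in the cases you manage to handle; it gives no principled way to certify that the one remaining lattice is genuinely obstructed rather than merely resistant to your changes of variables.

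The genuine gap is the non-retract-rationality of the exceptional case. First, your criterion is misstated: the cokernel $M^\circ$ of a flasque resolution satisfies $\hat{H}^{-1}(G',M^\circ)=0$ for \emph{all} subgroups $G'$ by the very definition of flasqueness, so exhibiting a subgroup with $\hat{H}^{-1}(G',M^\circ)\ne 0$ is impossible; an obstruction to invertibility would have to be detected in $\hat{H}^{1}$ (invertible lattices are also coflasque) or by a finer argument. Second, and more seriously, the equivalence ``retract $k$-rational if and only if $[M]^{fl}$ is invertible'' recorded in Theorem \ref{t6.5} is stated only in the Galois-twisted setting $G\simeq\fn{Gal}(K/k)$; in the purely monomial setting it fails as an equivalence (for $M=\bm{Z}[G]$ the flasque class is trivial, yet $k(M)^G$ is the Noether-problem field, which need not be retract rational), so you would have to locate and verify the correct one-directional statement in Saltman's work and then actually compute the flasque class of a rank-$5$ $D_4$-lattice --- none of which is done. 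The paper's route is entirely different: Theorems \ref{t6.3} and \ref{t6.4} carry out an explicit chain of birational changes of variables identifying $k(M)^G$ (up to a rational extension of transcendence degree one) with a field shown in [\cite{CHKK}, Example 5.11] to be non-retract-rational. Without either that identification or a correctly formulated and verified lattice-theoretic criterion, the exceptional case of your argument is incomplete.
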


\begin{proof}
Consider first the case $M_1$ is a faithful $G$-lattice. It
follows that $k(M)^G=k(M_1)(M_2)^G$ is isomorphic to the function
field of some 2-dimensional algebraic torus defined over
$k(M_1)^G$ and split over $k(M_1)$ (see Example \ref{ex1.2}). By
Theorem \ref{t1.11}, $k(M)^G$ is rational over $k(M_1)^G$. On the
other hand, $k(M_1)^G$ is rational over $k$ by Theorem
\ref{t1.14}. Hence $k(M)^G$ is $k$-rational.

For the rest of the proof we consider the case when $M_2$ is a
faithful $G$-lattice.

\medskip
Step 1. Write $M_1=\bigoplus_{1\le i\le 3} \bm{Z} x_i$,
$M_2=\bigoplus_{1\le j\le 2} \bm{Z} y_j$. Define $H=\{\sigma \in
G: \sigma(x_i)=x_i$ for $1\le i\le 3\}$.

There are three possibilities : $H=\{1\}$, $G$ or a non-trivial
proper normal subgroup of $G$.

If $H=\{1\}$, then $M_1$ is also a faithful $G$-lattice.
Hence $k(M)^G$ is $k$-rational by the first paragraph of this proof.

If $H=G$, then
$k(M)^G=k(y_1,y_2)(x_1,x_2,x_3)^G=k(y_1,y_2)^G(x_1,x_2,x_3)$.
Since $k(y_1,y_2)^G$ is $k$-rational by Theorem \ref{t1.13}, it
follows that $k(M)^G$ is $k$-rational.

If $H$ is a non-trivial proper normal subgroup of $G$, we apply
Theorem \ref{t1.8} to $k(M)=k(M_1)(M_2)=K(y_1,y_2)$ with
$K=k(M_1)$. It follows that $k(M)^G$ is rational over $k(M_1)^G$
except when $\fn{char}k\ne 2$ and $(G,H)\simeq (C_4,C_2)$ or
$(D_4,C_2)$. Since $k(M_1)^G$ is $k$-rational by Theorem
\ref{t1.14}, it follows that $k(M)^G$ is $k$-rational except for
the above situation.

\medskip
Step 2.
Consider the exceptional situation in Theorem \ref{t1.8},
i.e.\ $\fn{char}k\ne 2$ and $(G,H)\simeq (C_4,C_2)$, $(D_4,C_2)$.

We consider the case $(G,H)\simeq (C_4,C_2)$ first.
The case $(G,H)\simeq (D_4,C_2)$ will be treated later.

Write $G=\langle\sigma \rangle \simeq C_4$ and $H=\langle\sigma^2\rangle$.

Note that
$k(M)^G=k(M_2)(M_1)^G=\{k(M_2)(M_1)^H\}^{G/H}=\{k(M_2)^H(M_1)\}^{G/H}$.
Since $G/H$ acts faithfully on $k(M_2)^H$, we may regard
$\{k(M_2)^H(M_1)\}^{G/H}$ as the function field of a 3-dimensional
algebraic torus defined over $k(M_2)^H$.

We try to apply Theorem \ref{t1.12} to assert that
$\{k(M_2)^H(M_1)\}^{G/H}$ is rational over $\{k(M_2)^H\}^{G/H}$.
For this purpose, we should study Kunyavskii's list [\cite{Ku},
Theorem 1] closely. Since $G/H \simeq C_2$, every algebraic torus
split over a $C_2$-extension is rational by [\cite{Ku}, Theorem 1]
(this fact may also be observed by the integral extension of
$C_2$). Hence $\{k(M_2)^H(M_1)\}^{G/H}$ is rational over
$\{k(M_2)^H\}^{G/H}\simeq k(M_2)^G$. Since $k(M_2)^G$ is
$k$-rational by Theorem \ref{t1.13}, it follows that $k(M)^G$ is
$k$-rational.

\medskip
Step 3.
Now consider the case $\fn{char}k\ne 2$ and $(G,H)\simeq (D_4,C_2)$.

From Subcase 7.1 in the proof of Theorem \ref{t1.8} in Section 4,
it is not difficult to see that $G=\langle\sigma,\tau\rangle
\simeq D_4$ with $\sigma: y_1\mapsto y_2\mapsto \frac{1}{y_1}$,
$\tau: y_1\leftrightarrow y_2$. Note that
$H=\langle\sigma^2\rangle$ and $G/H\simeq C_2\times C_2$ acts
faithfully on $M_1$.

Note that $k(M)^G=k(M_2)(M_1)^G$. We may regard $k(M_2)(M_1)^G$ as
the function field of an algebraic torus defined over $k(M_2)^G$.

We check Kunyavskii's list [\cite{Ku}, Theorem 1] again. Such an
algebraic torus is rational over $k(M_2)^G$ (and therefore
$k(M)^G$ is $k$-rational), except when $M_1$ is the lattice
$U_1$(see [\cite{Ku}, page 2]). The lattice $U_1$ is a faithful
lattice over $\bm{Z}[G/H]$ with $G/H \simeq C_2\times C_2$ .

The lattice $U_1$ or the associated finite subgroup in
$GL_3(\bm{Z})$ is conjugate to the group $G_{3,1,4}$ in
[\cite{BBNWZ}]. Hence the action of
$G/H=\langle\sigma,\tau\rangle$ on $x_1$, $x_2$, $x_3$ may be
given as $\sigma:x_1\leftrightarrow x_2$,
$x_3\mapsto\frac{1}{x_1x_2x_3}$, $\tau: x_1\leftrightarrow x_3$,
$x_2\mapsto\frac{1}{x_1x_2x_3}$.

We conclude that $k(M)^G=k(M_2)(M_1)^G$ is $k$-rational except
when $G/H=\langle\sigma,\tau\rangle$ acts on $M_1$ as $\sigma:
x_1\leftrightarrow x_2$, $x_3\mapsto -x_1-x_2-x_3$, $\tau:
x_1\leftrightarrow x_3$, $x_2\mapsto -x_1-x_2-x_3$.

It follows that $M_1$ is a $G$-lattice where
$G=\langle\sigma_0,\tau\rangle \simeq D_4$ and $\sigma_0^2$ acting
trivially on $M_1$ (note that $\sigma$ is the image of $\sigma_0$
in $G/H$).

Re-write the generators of $G$. We conclude that the only
unsettled situation is the following: $\fn{char}k \ne 2$,
$G=\langle\sigma,\tau\rangle \simeq D_4$, $\sigma:
x_1\leftrightarrow x_2$, $x_3\mapsto -x_1-x_2-x_3$, $y_1\mapsto
y_2\mapsto -y_1$, $\tau: x_1\leftrightarrow x_3$, $x_2\mapsto
-x_1-x_2-x_3$, $y_1\leftrightarrow y_2$.

For this situation, we will show that $k(x_1,x_2,x_3,y_1,y_2)^G$
is not $k$-rational; in fact, it is not even retract $k$-rational.

Note that $k(x_1,x_2,x_3,y_1,y_2)^G=(k(y_1,y_2)^{\langle \sigma^2
\rangle}(x_1,x_2,x_3))^{G/\langle \sigma^2 \rangle}$. The
$3$-dimensional algebraic torus with function field
$(k(y_1,y_2)^{\langle \sigma^2 \rangle}(x_1,x_2,x_3))^{G/\langle
\sigma^2 \rangle}$ is not rational over $(k(y_1,y_2)^{\langle
\sigma^2 \rangle})^{G/\langle \sigma^2 \rangle}=k(y_1,y_2)^G$ by
Kunyavskii's Theorem [\cite{Ku}, Theorem 1]. But this doesn't
entail that $k(x_1,x_2,x_3,y_1,y_2)^G$ is or is not rational over
$k$. The proof of the non-rationality of
$k(x_1,x_2,x_3,y_1,y_2)^G$ over $k$ will be given in the following
Theorem \ref{t6.4}.
\end{proof}

\begin{theorem} \label{t6.3}
Let $k$ be an infinite field with $\fn{char}k\ne 2$, and
$G=\langle\tau\rangle \simeq C_2$ act on the rational function
field $k(x_1,x_2,x_3,x_4)$ by $k$-automorphisms defined as
\[
\tau: x_1\mapsto -x_1,~ x_2\mapsto \tfrac{x_4}{x_2},~ x_3\mapsto \tfrac{-(x_4-1)x_1^2+x_4(x_4-1)}{x_3},~ x_4\mapsto x_4.
\]

Then $k(x_1,x_2,x_3,x_4)^{\langle \tau\rangle}$ is not retract $k$-rational.
\end{theorem}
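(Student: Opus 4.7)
The plan is to identify $L := k(x_1,x_2,x_3,x_4)^{\langle\tau\rangle}$ as the function field, over a two-dimensional rational base, of a fibre product of two Severi--Brauer conics, and then extract an unramified-Brauer obstruction to retract $k$-rationality. The starting observation is that $a := x_1^2$ and $t := x_4$ are $\tau$-fixed, so $k(a,t) \subseteq L$; the field $K := k(x_1,x_4) = k(a,t)(\sqrt{a})$ is a quadratic extension on which $\tau$ acts as the non-trivial $k(a,t)$-automorphism, and on $K(x_2,x_3)$ the action $\tau(x_2) = t/x_2$, $\tau(x_3) = (t-1)(t-a)/x_3$ is purely quasi-monomial over $k(a,t)$. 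Passing to the trace-norm coordinates $u_i := x_i + \tau(x_i)$, $v_i := x_1(x_i - \tau(x_i))$ for $i = 2,3$ yields
\[
L \;=\; k(a,t,u_2,v_2,u_3,v_3) \big/ \bigl( au_2^2 - v_2^2 - 4at,\; au_3^2 - v_3^2 - 4a(t-1)(t-a)\bigr),
\]
so $L$ is the function field over $k(a,t)$ of the fibre product $C_1 \times_{k(a,t)} C_2$ of two smooth conics, whose Brauer classes are the quaternion symbols $(a,t)$ and $(a,(t-1)(t-a))$ in $\fn{Br}(k(a,t))$.

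Next I would translate the retract rationality question into an unramified Brauer-group computation. By Saltman's criterion, retract $k$-rationality of $L$ forces every class in $\fn{Br}(L)$ that is unramified at every divisorial valuation of $L$ trivial on $k$ to come from $\fn{Br}(k)$; it therefore suffices to exhibit one unramified class not lifted from $\fn{Br}(k)$. The natural candidate is the image in $\fn{Br}(L)$ of $\beta := (a,t-1) \in \fn{Br}(k(a,t))$: its only potentially dangerous residues on a smooth compactification sit over the loci $a = 0$, $t = 0$, $t = 1$, $t = a$, $t = \infty$, and one verifies---using the vanishing in $\fn{Br}(L)$ of both $(a,t)$ and $(a,(t-1)(t-a))$---that $\beta$ is unramified everywhere while remaining non-zero in $\fn{Br}(L)$ by Amitsur's index theorem for Severi--Brauer function fields.

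The hard part will be the global ramification analysis on a smooth projective $k$-model of $L$: the fibre product $C_1 \times_{k(a,t)} C_2$ is generically smooth but has degenerate fibres over $a = 0$ and $t(t-1)(t-a) = 0$, so a careful resolution followed by a divisor-by-divisor residue computation is required, with the residues on the exceptional components needing to be matched against those of $\beta$ via the projection formula. If this direct route proves intractable, an alternative I would pursue in parallel is to exhibit a stable $k$-birational equivalence between $L$ and (an $\mathbb{A}^m$-bundle over) one of the fifteen Kunyavskii non-retract-rational three-dimensional algebraic $k$-tori---plausibly the torus attached to the $U_1$-lattice that reappears in the $D_4$ exceptional case of Theorem~\ref{t6.2}---and then appeal to the stable-birational invariance of retract rationality. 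Either way, non-vanishing of $\fn{Br}_{\mathrm{nr}}(L/k)$ (or the stable equivalence) gives that $L$ is not retract $k$-rational, and in particular not $k$-rational.
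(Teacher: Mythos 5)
Your reduction of $L=k(x_1,x_2,x_3,x_4)^{\langle\tau\rangle}$ to the function field over $k(a,t)$ of the fibre product of the two conics with classes $(a,t)$ and $(a,(t-1)(t-a))$ is correct (I checked the identities $au_i^2-v_i^2=4at$ and $4a(t-1)(t-a)$, and that $k(a,t,u_2,v_2,u_3,v_3)$ is an index-$2$ subfield, hence all of $L$), and the obstruction you are aiming at --- a class in $\fn{Br}(L)$ unramified over $k$ but not coming from $\fn{Br}(k)$, via Saltman's criterion --- is indeed the mechanism that ultimately underlies the paper's proof. But as written the proposal has a genuine gap: the decisive step, namely that $\beta=(a,t-1)$ has trivial residue at \emph{every} divisorial valuation of $L$ trivial on $k$, is not carried out; you explicitly defer it as ``the hard part.'' That verification is the entire content of the theorem. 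The local analysis is encouraging at the generic points of the obvious divisors (over the completion at $t=1$ one finds $(a,t-a)=(a,1-a)=0$, so $\beta$ equals $[C_2]$ there and dies in $L$; over $a=0$ one gets $\beta=[C_1]+[C_2]$; over $a=\infty$ and $t=\infty$ similarly), but the combination of conic classes absorbing $\partial_v\beta$ changes from divisor to divisor, so the exceptional divisors over the crossing points of the ramification loci (e.g.\ $(a,t)=(0,1),(0,\infty),(\infty,1),(\infty,\infty)$ and the points of $a=0$ where $t(t-1)$ vanishes) must each be checked; none of this is done. Moreover your fallback route cannot work: the statement is to be proved for \emph{every} infinite field of characteristic $\ne 2$, in particular for $k$ algebraically closed, over which every algebraic $k$-torus is split and rational, so there is no Kunyavskii torus over $k$ to compare with. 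The $U_1$-torus appearing in Theorem \ref{t6.2} is defined over the base $k(y_1,y_2)^G$, not over $k$, and the paper points out explicitly that its non-rationality over that base decides nothing about rationality over $k$.

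For comparison, the paper's proof avoids the residue computation altogether: it first reduces to $k$ algebraically closed by tensoring the retract-rationality data of Definition \ref{d6.1} with $\bar{k}$, and then produces an explicit chain of coordinate changes identifying $k(x_1,x_2,x_3,x_4)^{\langle\tau\rangle}$ with the fixed field of a specific $C_2\times C_2$ monomial action on $k(v_1,v_2,v_3,v_4)$ that is already known not to be retract $k$-rational by [\cite{CHKK}, Example 5.11] (where the unramified Brauer computation you are attempting has in effect been done). If you want a self-contained proof along your lines, you should first perform the reduction to $\bar{k}$ (which also disposes of the separate issue of $\beta$ differing from a constant class), and then complete the divisor-by-divisor residue check, including the exceptional divisors over the bad points; until that is done the argument is a plan, not a proof.
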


\begin{proof}
Suppose that $k(x_1,x_2,x_3,x_4)^{\langle \tau\rangle}$ is retract $k$-rational and $\bar{k}$ is an algebraic closure of $k$.
Choose the $k$-algebra $A$ and morphisms $\varphi$, $\psi$ provided in Definition \ref{d6.1}.
We get $\psi\circ \varphi =1$ where
\[
A \xrightarrow{~\varphi~} k[X_1,\ldots,X_m][1/f] \xrightarrow{~\psi~} A.
\]

Tensor the above morphisms with $\bar{k}$. We get $A\otimes_k
\bar{k} \to \bar{k} [X_1,\ldots,X_m][1/f] \to A\otimes_k \bar{k}$
and the composite map is $1_{A\otimes_k \bar{k}}$. In other words,
$A\otimes_k \bar{k}$ is also retract $\bar{k}$-rational.

Thus, to prove Theorem \ref{t6.3}, we may assume that the field $k$ is algebraically closed.

\medskip
Step 1.
Let $k$ be an algebraically closed field with $\fn{char}k\ne 2$,
and $H=\langle \sigma,\lambda \rangle \simeq C_2\times C_2$ act on the rational function field $k(v_1,v_2,v_3,v_4)$ by $k$-automorphisms defined as
\begin{align*}
\sigma &: v_1\mapsto v_2v_4,~ v_2\mapsto v_1v_4,~ v_3\mapsto \tfrac{1}{v_3},~ v_4\mapsto \tfrac{1}{v_4}, \\
\lambda &: v_1 \mapsto \tfrac{1}{v_1v_4},~ v_2\mapsto \tfrac{1}{v_2v_4},~ v_3\mapsto -v_3,~ v_4\mapsto v_4.
\end{align*}

By [\cite{CHKK}, Example 5.11, page 2355],
the fixed field $k(v_1,v_2,v_3,v_4)^H$ is not retract $k$-rational.
We will show that $k(v_1,v_2,v_3,v_4)^H$ is $k$-isomorphic to $k(x_1,x_2,x_3,x_4)^{\langle \tau\rangle}$ and finish the proof.

\medskip
Step 2.
Define
\[
u_1=v_1,\quad u_2=v_2v_4,\quad u_3=\frac{v_3+1}{v_3-1},\quad u_4=\frac{v_4+1}{v_4-1}.
\]
Then $k(v_1,v_2,v_3,v_4)=k(u_1,u_2,u_3,u_4)$ and
\begin{align*}
\sigma &: u_1\mapsto u_2,~ u_2\mapsto u_1,~ u_3\mapsto -u_3,~ u_4\mapsto -u_4, \\
\lambda &: u_1\mapsto \tfrac{u_4-1}{u_1(u_4+1)},~ u_2\mapsto \tfrac{u_4+1}{u_2(u_4-1)},~ u_3\mapsto \tfrac{1}{u_3},~ u_4\mapsto u_4.
\end{align*}
It follows that $k(u_1,u_2,u_3,u_4)^{\langle \sigma\rangle}=k(w_1,w_2,w_3,w_4)$ where
\[
w_1=\frac{u_1+u_2}{2},\quad w_2=\frac{u_1-u_2}{2u_4}, \quad w_3=\frac{u_3}{u_4}, \quad w_4=u_4^2.
\]
The action of $\lambda$ on $k(v_1,v_2,v_3,v_4)^{\langle \sigma\rangle}=k(w_1,w_2,w_3,w_4)$ is
\[
\lambda: w_1\mapsto \tfrac{w_1+w_1w_4+2w_2w_4}{(w_4-1)(w_1^2-w_2^2w_4)},~ w_2\mapsto -\tfrac{2w_1+w_2+w_2w_4}{(w_4-1)(w_1^2-w_2^2w_4)},~
w_3\mapsto \tfrac{1}{w_3w_4},~ w_4\mapsto w_4.
\]

\medskip
Step 3.
Define
\begin{gather*}
x_1=\frac{w_1+w_2w_4}{w_1+w_2},\quad x_2=\frac{w_2(w_4-1)(w_1^2-w_2^2w_4)}{(w_1+w_2)(2w_1+w_2+w_2w_4)}, \\
x_3=\frac{w_2w_3w_4(2w_1+w_2+w_2w_4)}{(w_1+w_2)^2}, \quad x_4=\frac{w_1^2-w_2^2w_4}{(w_1+w_2)^2}
\end{gather*}
then $K(x_1,x_2,x_3,x_4)=K(w_1,w_2,w_3,w_4)$ because
\[
w_1=\frac{x_2(x_1+x_4)}{x_4(x_1-1)},\quad w_2=-\frac{x_2(x_4-1)}{x_4(x_1-1)},\quad
w_3=\frac{x_3}{x_1^2-x_4},\quad w_4=-\frac{x_1^2-x_4}{x_4-1}.
\]
The action of $\lambda$ on $K(x_1,x_2,x_3,x_4)$ is given by
\[
\lambda: x_1\mapsto -x_1,~ x_2\mapsto \tfrac{x_4}{x_2},~ x_3\mapsto \tfrac{-(x_4-1)x_1^2+x_4(x_4-1)}{x_3},~ x_4\mapsto x_4.
\]

This is just the action given in the statement of this theorem.
\end{proof}

\begin{theorem} \label{t6.4}
Let $k$ be an infinite field with $\fn{char}k\ne 2$,
and $G=\langle\rho,\tau\rangle \simeq D_4$ act on the rational function field $k(x_1,x_2,x_3,x_4,x_5)$ by $k$-automorphisms defined as
\begin{align*}
\rho &: x_1\mapsto x_2,~ x_2\mapsto x_1,~ x_3\mapsto \tfrac{1}{x_1x_2x_3},~ x_4\mapsto x_5,~ x_5\mapsto \tfrac{1}{x_4}, \\
\tau &: x_1\mapsto x_3,~ x_2\mapsto \tfrac{1}{x_1x_2x_3},~ x_3\mapsto x_1,~ x_4\mapsto x_5,~ x_5\mapsto x_4.
\end{align*}

Then $k(x_1,x_2,x_3,x_4,x_5)^G$ is not retract $k$-rational.
In particular, it is not $k$-rational.
\end{theorem}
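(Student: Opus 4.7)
The plan is to realize $k(x_1,\ldots,x_5)^G$ as a purely transcendental extension of transcendence degree one of the 4-dimensional $V_4$-fixed field from [\cite{CHKK}, Example 5.11] (the same field used in Step~1 of the proof of Theorem~\ref{t6.3}). The conclusion will then follow from the standard fact that retract $k$-rationality is invariant under purely transcendental extensions in both directions.

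First, reduce by the normal subgroup $\langle\rho^2\rangle\triangleleft G$. Since $\rho^2$ fixes $x_1,x_2,x_3$ and sends $x_4\mapsto 1/x_4$, $x_5\mapsto 1/x_5$, Lemma~\ref{l3.2} gives $k(x_1,\ldots,x_5)^{\langle\rho^2\rangle}=k(x_1,x_2,x_3,s,t)$ where $s=(x_4x_5+1)/(x_4+x_5)$ and $t=(x_4x_5-1)/(x_4-x_5)$. A direct computation shows $G/\langle\rho^2\rangle=\langle\bar\rho,\bar\tau\rangle\simeq V_4$ acts by $\bar\rho:s\mapsto 1/s$, $t\mapsto -1/t$ and $\bar\tau:s\mapsto s$, $t\mapsto -t$, together with the original action on $x_1,x_2,x_3$.

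Next, identify the $V_4$-stable subfield $L:=k(x_1,x_2,x_3,t)$ with the CHKK field. Taking $\sigma:=\bar\rho\bar\tau$, $\lambda:=\bar\tau$ as $V_4$-generators and setting $v_1=x_2$, $v_2=1/x_1$, $v_3=t$, $v_4=x_1x_3$, one checks that $L=k(v_1,v_2,v_3,v_4)$ and that $\sigma,\lambda$ act on the $v_i$'s exactly as in [\cite{CHKK}, Example 5.11]. Hence $L^{V_4}$ is not retract $k$-rational.

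Finally, show that $k(x_1,\ldots,x_5)^G\cong L^{V_4}(w)$ for some $w$ transcendental over $L^{V_4}$. Writing $F:=L^{V_4}$ and $E:=L^{\bar\tau}$, one has $k(x_1,\ldots,x_5)^G=E(s)^{\langle\bar\rho\rangle}$ since $\bar\tau$ fixes $s$; here $\bar\rho$ acts non-trivially on $E$ and sends $s\mapsto 1/s$. Because $\fn{char}k\ne 2$, write $E=F(\alpha)$ with $\alpha^2=d\in F^\times$; the choice $\alpha=x_1+x_3-x_2-1/(x_1x_2x_3)$ is verified directly to be $\bar\tau$-fixed and $\bar\rho$-anti-invariant, hence is a valid primitive element. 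Then $y:=\alpha(s-1/s)$ is $\bar\rho$-invariant, and together with $z:=s+1/s$ it generates $E(s)^{\langle\bar\rho\rangle}$ over $F$, subject to the single relation $y^2=d(z^2-4)=d(z-2)(z+2)$. This smooth conic over $F$ has the rational point $(z,y)=(2,0)$, so it is birationally $\mathbb{P}^1_F$; thus $k(x_1,\ldots,x_5)^G\cong F(w)=L^{V_4}(w)$ for some transcendental $w$. Retract $k$-rationality of $L^{V_4}(w)$ would force $L^{V_4}$ to be retract $k$-rational, contradicting the preceding step.

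The main obstacle is this last step, specifically choosing the right anti-invariant $\alpha$ so that the resulting quadratic extension of $F(s+1/s)$ appears as a conic with an evident $F$-point $(2,0)$, which makes the descent of retract $k$-rationality transparent.
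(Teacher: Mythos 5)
Your argument is correct --- I checked the key identities: $\alpha=x_1+x_3-x_2-\tfrac{1}{x_1x_2x_3}$ is indeed $\tau$-fixed and $\rho$-anti-invariant, and with $v_1=x_2$, $v_2=1/x_1$, $v_3=t$, $v_4=x_1x_3$ the action of $\langle\bar\rho\bar\tau,\bar\tau\rangle$ on $k(x_1,x_2,x_3,t)$ matches the $\langle\sigma,\lambda\rangle$-action of [\cite{CHKK}, Example 5.11] verbatim --- but your route is genuinely different from the paper's. The paper passes from $k(X_1,\ldots,X_5)=k(M)^{\langle\rho^2\rangle}$ to the $\tau$-invariants via the explicit generators of [\cite{HKY}, Lemma 3.9], then performs a further change of variables $z_1,\ldots,z_5$ so that the residual $\rho$-action fixes $z_5$ and restricts on $z_1,\ldots,z_4$ to the action of Theorem \ref{t6.3}, which is itself reduced to the CHKK field by another three-stage coordinate change. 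You instead locate the CHKK field directly as the $V_4$-stable subfield $L=k(x_1,x_2,x_3,t)$ of $k(M)^{\langle\rho^2\rangle}=L(s)$, and dispose of the remaining variable $s$ by exhibiting $E(s)^{\langle\bar\rho\rangle}$ as the function field of the conic $y^2=d(z^2-4)$ over $F=L^{V_4}$, which has the evident smooth $F$-point $(2,0)$; this shows $k(M)^G\simeq F(w)$ and finishes by the same stability of retract rationality under a purely transcendental extension that the paper invokes in its Step 4. Both proofs rest on exactly the same two external inputs (the CHKK example and [\cite{SaA}, Proposition 3.6; \cite{Ka4}, Lemma 3.4]); yours buys a much shorter computation and bypasses Theorem \ref{t6.3} entirely, at the cost of not producing that $4$-dimensional statement as a standalone result. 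One small omission: [\cite{CHKK}, Example 5.11] is stated for algebraically closed $k$, so before concluding that $L^{V_4}$ is not retract $k$-rational for a general infinite $k$ with $\fn{char}k\ne 2$ you should insert the base-change reduction used at the start of the proof of Theorem \ref{t6.3} (if $A$ is retract $k$-rational then $A\otimes_k\bar{k}$ is retract $\bar{k}$-rational, and taking $H$-invariants of $k(v_1,\ldots,v_4)$ commutes with the extension $k\subset\bar{k}$ since $|H|=4$ is prime to $\fn{char}k$); with that sentence added the proof is complete.
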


\begin{proof}

Step 1. By Lemma \ref{l3.2} we find
$k(x_1,x_2,x_3,x_4,x_5)^{\langle\rho^2\rangle}=k(X_1,\ldots,X_5)$
where
\[
X_1=x_1,\quad X_2=x_2,\quad X_3=x_3,\quad X_4:=\frac{x_4x_5+1}{x_4+x_5},\quad X_5:=\frac{x_4x_5-1}{x_4-x_5}.
\]
The actions of $\rho$ and $\tau$ on $K(X_1,\ldots,X_5)$ are given by
\begin{align*}
\rho &: X_1\mapsto X_2,~ X_2\mapsto X_1,~ X_3\mapsto \tfrac{1}{X_1X_2X_3},~ X_4\mapsto \tfrac{1}{X_4},~ X_5\mapsto -\tfrac{1}{X_5}, \\
\tau &: X_1\mapsto X_3,~ X_2\mapsto \tfrac{1}{X_1X_2X_3},~ X_3\mapsto X_1,~ X_4\mapsto X_4,~ X_5\mapsto -X_5.
\end{align*}

\medskip
Step 2.
We recall a result of invariant generators [\cite{HKY}, Lemma 3.9]:
Let $k$ be a field with $\fn{char}k\ne 2$, $c\in k\backslash \{0\}$, and $\tau$ act on $k(x,y,z)$ by
\[
\tau: x\leftrightarrow y,~ z\mapsto \tfrac{c}{xyz}.
\]

Then $k(x,y,z)^{\langle \tau \rangle}=k(t_1,t_2,t_3)$ where
\[
t_1=\frac{xy}{x+y},\quad t_2=\frac{xyz+\frac{c}{z}}{x+y},\quad t_3=\frac{xyz-\frac{c}{z}}{x-y}.
\]

\medskip
Step 3.
By the formula in Step 2,
we find that $k(X_1,\ldots,X_5)^{\langle \tau\rangle}=k(y_1,\ldots,y_5)$ where
\[
y_1=\frac{2X_1X_3}{X_1+X_3},~ y_2=\frac{X_1X_2X_3+\frac{1}{X_2}}{X_1+X_3},~
y_3=\frac{X_1X_2X_3-\frac{1}{X_2}}{X_1-X_3},~ y_4=X_4,~ y_5=\frac{2X_5}{X_1-X_3}.
\]
The action of $\rho$ on $k(y_1,\ldots,y_5)$ is given by
\[
\rho: y_1\mapsto -\tfrac{(y_2+y_3)(y_2-y_3)}{y_1y_2(y_3+1)(y_3-1)},~ y_2\mapsto \tfrac{1}{y_2},~ y_3\mapsto\tfrac{1}{y_3},~ y_4\mapsto \tfrac{1}{y_4},~ y_5\mapsto \tfrac{(y_2+y_3)(y_2-y_3)}{y_3y_5(y_2+1)(y_2-1)}.
\]

\medskip
Step 4.
Define
\begin{gather*}
z_1=\frac{y_3+1}{y_3-1},\quad z_2=\frac{\sqrt{-1}y_5(y_2-1)}{y_1y_2(y_3-1)}, \\
z_3=\frac{2\sqrt{-1} y_1y_2(y_3+1)}{(y_2+1)(y_3-1)},\quad
z_4=\frac{(y_2-1)(y_3+1)}{(y_2+1)(y_3-1)},\quad
z_5=\frac{(y_2-1)(y_4+1)}{(y_2+1)(y_4-1)},
\end{gather*}
then the action of $\rho$ on $k(y_1,\ldots,y_5)=k(z_1,\ldots,z_5)$ is given by
\[
\rho: z_1\mapsto -z_1,~z_2\mapsto\tfrac{z_4}{z_2},~ z_3\mapsto \tfrac{-(z_4-1)z_1^2+z_4(z_4-1)}{z_3},~ z_4\mapsto z_4,~ z_5\mapsto z_5.
\]

By Theorem \ref{t6.3}, $k(z_1,z_2,z_3,z_4)^{\langle \rho\rangle}$ is not retract $k$-rational.
Since $k(z_1,z_2,z_3,z_4,z_5)^{\langle\rho\rangle}$ $=k(z_1,z_2,z_3,z_4)^{\langle\rho\rangle}(z_5)$ and
the retract rationality is stable under rational extensions [\cite{SaA}, Proposition 3.6; \cite{Ka4}, Lemma 3.4],
it follows that $k(z_1,z_2,z_3,z_4,z_5)^{\langle \rho \rangle}$ is not retract $k$-rational.
\end{proof}

\begin{remark}
In Theorem \ref{t6.3}, by applying Yamasaki's result [\cite{Ya},
Lemma 4.3], we can show that
$k(x_1,x_2,x_3,x_4)^{\langle\tau\rangle}$ is not retract rational
over $k(x_4)$. But it is not clear that we can show that
$k(x_1,x_2,x_3,x_4)^{\langle\tau\rangle}$ is not retract
$k$-rational by this approach.

On the other hand, define
\[
t_1=x_1,~ t_2=-\frac{x_2(x_1^2-1)}{x_4-1},~ t_3=\frac{1}{2}\left(\!\frac{x_3}{x_4-1}-\frac{x_1^2-x_4}{x_3}\!\right),~
t_4=\frac{1}{2x_1}\left(\!\frac{x_3}{x_4-1}+\frac{x_1^2-x_4}{x_3}\!\right).
\]
Then $\tau$ acts on $K(t_1,t_2,t_3,t_4)=K(x_1,x_2,x_3,x_4)$ by
\[
\tau: t_1\mapsto -t_1,~ t_2\mapsto \tfrac{(t_1^2t_4^2-t_3^2+1)(t_1^2(t_4^2+1)-t_3^2)}{t_2},~ t_3\mapsto t_3,~ t_4\mapsto t_4.
\]
Define $t=t_1^2$. Then
$k(x_1,x_2,x_3,x_4)^{\langle\tau\rangle}=k(u,v,t,t_3,t_4)$ with
the relation
\[
u^2-tv^2=(tt_4^2-t_3^2+1)(t+tt_4^2-t_3^2).
\]
\end{remark}

\begin{theorem} \label{t6.5}
Let $G$ be a finite group, $G \simeq Gal(K/k)$, $M$ be a
decomposable $G$-lattice, i.e.\ $M=M_1\oplus M_2$ as
$\bm{Z}[G]$-modules where $1\le \fn{rank}_{\bm{Z}} M_1 <
\fn{rank}_{\bm{Z}} M$.

\rm{(1)} If $k$ is an infinite field, then $K(M)^G$ is retract
$k$-rational if and only if so are $K(M_1)^G$ and $K(M_2)^G$.

\rm{(2)} If $K(M_1)^G$ and $K(M_2)^G$ are $k$-rational, so is
$K(M)^G$.

\rm{(3)} If $\fn{rank}_{\bm{Z}} M_i \le 3$ for $i=1,2$, then
$K(M)^G$ is $k$-rational if and only if so are $K(M_1)^G$ and
$K(M_2)^G$.
\end{theorem}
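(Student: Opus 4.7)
My approach is to translate everything into statements about algebraic tori. Because $G\simeq \fn{Gal}(K/k)$, the fixed field $K(M)^G$ is by construction (see Example~\ref{ex1.2} and Definition~\ref{d1.9}) the function field $k(T_M)$ of the algebraic $k$-torus $T_M$ with character lattice $M$ and splitting field $K$; the $\bm{Z}[G]$-decomposition $M=M_1\oplus M_2$ dualizes to a decomposition $T_M\simeq T_{M_1}\times_k T_{M_2}$ of $k$-tori, so $K(M)^G\simeq k(T_{M_1}\times_k T_{M_2})$. Under this dictionary, part (2) is immediate: if each $K(M_i)^G=k(T_{M_i})$ is $k$-rational then $T_{M_i}$ is $k$-birational to $\bm{A}^{n_i}_k$ where $n_i=\fn{rank}_{\bm{Z}} M_i$, hence $T_{M_1}\times_k T_{M_2}$ is $k$-birational to $\bm{A}^{n_1+n_2}_k$, so $K(M)^G$ is $k$-rational.

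For part (1), I would invoke Saltman's retract-rationality criterion for tori [\cite{Sa1}; see also \cite{Ka4}, Section 2]: $T_M$ is retract $k$-rational if and only if $M$ is an \emph{invertible} $G$-lattice, i.e.\ a $\bm{Z}[G]$-summand of some permutation $G$-lattice. Since the class of invertible $G$-lattices is closed under both direct summands and direct sums, $M_1\oplus M_2$ is invertible if and only if both $M_1$ and $M_2$ are, and the two implications of (1) follow simultaneously. Alternatively, one can argue directly from Definition~\ref{d6.1}: the identity element of $T_{M_j}$ is a $k$-rational point giving a section of the projection $T_{M_1}\times_k T_{M_2}\to T_{M_i}$, so $k[T_{M_i}]$ is a $k$-algebra retract of $k[T_{M_1}]\otimes_k k[T_{M_2}]$; composing the retract-rationality data of the product with this retraction yields retract-rationality data for each factor, and conversely one tensors retract data of the two factors to obtain retract data for the product.

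Part (3) now combines (1) and (2) with the fact that for a $k$-torus of rank at most three, retract $k$-rationality coincides with $k$-rationality: this is Theorem~\ref{t1.11} for ranks $1,2$ and Theorem~\ref{t1.12} for rank $3$ (and when $k$ is finite the issue evaporates since every connected $k$-group, hence every $k$-torus, is $k$-rational by Lang's theorem). Indeed, if $K(M)^G$ is $k$-rational then it is retract $k$-rational; by (1) each $K(M_i)^G$ is retract $k$-rational; and by the rank restriction each is then $k$-rational. The converse is exactly (2). The only substantive input in the whole argument is Saltman's invertible-lattice criterion (or equivalently the direct identity-section retraction argument); once this is granted, the theorem is formal from the product structure $T_M\simeq T_{M_1}\times_k T_{M_2}$, and this is where I expect the only real work to lie.
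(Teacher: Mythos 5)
Your overall strategy coincides with the paper's: identify $K(M)^G$ with the function field of $T_{M_1}\times_k T_{M_2}$ (the paper's Step~1, via the anti-equivalence between character lattices and tori), deduce (2) immediately from the product structure, prove (1) by a section/retraction argument together with tensoring of retract data, and obtain (3) from (1), (2) and the fact that for tori of dimension at most $3$ retract $k$-rationality forces $k$-rationality (Theorems \ref{t1.11} and \ref{t1.12}; for dimension $3$ this is Kunyavskii's statement that the exceptional tori are not even retract rational). Your ``direct'' argument for (1) --- the identity section of one factor exhibits $k[T_{M_i}]$ as a $k$-algebra retract of $k[T_{M_1}]\otimes_k k[T_{M_2}]$, and retract data for the two factors tensor to retract data for the product --- is essentially the paper's Step~2, which routes the second implication through transitivity of retract rationality ([\cite{Ka4}, Theorem 4.2]) instead of tensoring both sets of data at once; either version works, modulo the standard lemmas that retract rationality is independent of the chosen affine model and passes to retracts.

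The one genuine flaw is that the criterion you cite as your primary justification for (1) is misstated: Saltman's theorem says $K(M)^G$ is retract $k$-rational if and only if the \emph{flabby class} $[M]^{fl}$ is an invertible $G$-lattice, not that $M$ itself is invertible. As you state it the criterion is false: for $G\simeq C_2=\fn{Gal}(K/k)$ acting on $M=\bm{Z}$ by $-1$, the corresponding one-dimensional torus is $k$-rational (hence retract $k$-rational), yet $M$ is not a direct summand of any permutation $G$-lattice. Your deduction survives the correction, since $[M_1\oplus M_2]^{fl}=[M_1]^{fl}\oplus[M_2]^{fl}$ and invertible lattices are closed under direct sums and direct summands --- this is precisely the ``alternative'' proof the paper sketches at the end of its Step~2 --- but the criterion must be quoted with the flabby class. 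A further small caveat: in (3) retract rationality is only defined over infinite $k$; your aside that the finite-field case is harmless is right in spirit for rank $\le 3$ (the exceptional tori in Kunyavskii's list require non-cyclic splitting groups, which cannot occur over a finite field), but Lang's theorem is not the reason.
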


\begin{proof}
Step 1.

First we show that, if $M$ decomposes as above, then $K(M)^G$ is
$k$-isomorphic to the free composite of $K(M_1)^G$ and $K(M_2)^G$
over $k$.

Let $T,T_1,T_2$ be the algebraic tori over $K$ whose character
modules are $M,M_1,M_2$ respectively. Note that the function
fields of $T,T_1,T_2$ are $K(M)^G, K(M_1)^G, K(M_2)^G$. Since the
category of character modules is anti-equivalent to the category
of algebraic tori [\cite{Vo2}, page 27, Example 6; \cite{KMRT},
page 333, Proposition 20.17], we find that $T$ is isomorphic to
$T_1 \times T_2$. Hence the result.

Alternatively, this result can be proved by showing $[K(M): L] \le
|G|$ where $L$ is the free composite of $K(M_1)^G$ and $K(M_2)^G$.
The details are omitted.

\medskip
Step 2.

Consider $K(M_1)^G \subset K(M)^G=K(M_1)(M_2)^G$. By [\cite {Sa3},
Theorem 1.3; \cite {Ka4}, Lemma 3.4 (iv)], $K(M_1)^G$ is a dense
retraction of $K(M)^G$. If $K(M)^G$ is retract $k$-rational, by
[\cite{Sa3}, Lemma 1.1; \cite{Ka4}, Lemma 3.4 (iii)], then
$K(M_1)^G$ is also retract $k$-rational. Similarly for $K(M_2)^G$.

Now assume that both $K(M_1)^G$ and $K(M_2)^G$ are retract
$k$-rational. Consider $k \subset K(M_1)^G \subset K(M)^G$. Since
$K(M_2)^G$ is retract $k$-rational, choose the affine domain $A$
whose quotient field is $K(M_2)^G$, the localized polynomial ring,
the $k$-algebraic morphisms, etc. provided in Definition
\ref{d6.1}. Tensor all these with $K(M_1)^G$. We find that the
free composite of $K(M_1)^G$ and $K(M_2)^G$ is retract rational
over $K(M_1)^G$. By Step 1, the free composite of $K(M_1)^G$ and
$K(M_2)^G$ is nothing but $K(M)^G$. We conclude that $K(M)^G$ is
retract rational over $K(M_1)^G$ and $K(M_1)^G$ is retract
rational over $k$. By [\cite{Ka4}, Theorem 4.2], $K(M)^G$ is
retract $k$-rational.

Alternatively, we may use Saltman's Theorem that $K(M)^G$ is
retract $k$-rational if and only if $[M]^{fl}$ is an invertible
$G$-lattice where $[M]^{fl}$ is the flabby class of $M$ (see
[\cite{Sa3}, Theorem 1.3] and [\cite{Ka4}, Section 2]). Note
that$[M_1\oplus M_2]^{fl}=[M_1]^{fl} \oplus [M_2]^{fl}$. The
details are omitted.

\medskip
Step 3.

If $K(M_1)^G$ and $K(M_2)^G$ are $k$-rational, $K(M)^G$ is also
$k$-rational by Step 1.

\medskip
Step 4.

We will prove that, if $K(M)^G$ is $k$-rational, then both
$K(M_1)^G$ and $K(M_2)^G$ are $k$-rational.

If $\fn{rank}_{\bm{Z}} M_i \le 2$, this follows from Theorem
\ref{t1.11}.

From now on, we assume that $\fn{rank}_{\bm{Z}} M_1 = 3$. Since
$K(M)^G$ is retract $k$-rational, we find that $K(M_1)^G$ is also
retract $k$-rational by Step 2.

In [\cite{Ku}, Theorem 1], Kunyavskii not only finds a birational
classification of $3$-dimensional algebraic tori, but also proves
that, if an algebraic torus is not $k$-rational, it is nor retract
$k$-rational (see Theorem \ref{t1.12} and [\cite{Ka4}, page 25,
the fifth paragraph]). Hence $K(M_1)^G$ is $k$-rational.
\end{proof}

\newpage
\renewcommand{\refname}{\centering{References}}

\end{document}